\newtheorem{theorem}{Theorem}[section]
\newtheorem{corollary}[theorem]{Corollary}
\theoremstyle{definition}
\theoremstyle{remark}
\newtheorem{example}[theorem]{Example}
\newcommand{\R}{\mathbb{R}}  
\newcommand{\e}{\mathrm e}
\newcommand{\bfu}{\mathbf u}
\newcommand{\bff}{\mathbf f}
\newcommand{\bfx}{\mathbf x}
\newcommand{\dx}{\mathrm d}
\newcommand{\FD}{\mathrm{FD}}
\newcommand{\JS}{\mathrm{JS}}
\newcommand{\M}{\mathrm M}
\newcommand{\Z}{\mathrm Z}
\newcommand{\ZR}{\mathrm{ZR}}
\title[5th-order WENO schemes]{Fifth-order weighted essentially non-oscillatory schemes with new Z-type nonlinear weights for hyperbolic conservation laws}
\author{Jiaxi Gu}
\address{Department of Mathematics $\&$ POSTECH MINDS (Mathematical Institute for Data Science), Pohang University of Science and Technology, Pohang 37673, Korea}
\email{jiaxigu@postech.ac.kr}
\author{Xinjuan Chen}
\address{Department of Mathematics, College of Science, Jimei University, Xiamen, Fujian 361021, China}
\email{sabrinachern@yahoo.com}
\author{Jae-Hun Jung}
\address{Department of Mathematics $\&$ POSTECH MINDS (Mathematical Institute for Data Science), Pohang University of Science and Technology, Pohang 37673, Korea}
\email{jung153@postech.ac.kr}
\subjclass[2020]{65M06}
\keywords{Weighted essentially non-oscillatory, Hyperbolic conservation laws, Smoothness indicators, Z-type nonlinear weights}
\begin{document}

\maketitle

\begin{abstract}
In this paper we propose new Z-type nonlinear weights of the fifth-order weighted essentially non-oscillatory (WENO) finite difference scheme for hyperbolic conservation laws.
Instead of employing the classical smoothness indicators for the nonlinear weights, we take the $p$th root of the smoothness indicators and follow the form of Z-type nonlinear weights, leading to fifth order accuracy in smooth regions, even at the critical points, and sharper approximations around the discontinuities. 
We also prove that the proposed nonlinear weights converge to the linear weights as $p \to \infty$, implying the convergence of the resulting WENO numerical flux to the finite difference numerical flux.
Finally, numerical examples are presented by comparing with other WENO schemes, such as WENO-JS, WENO-M and WENO-Z, to demonstrate that the proposed WENO scheme performs better in shock capturing.
%
\end{abstract}

\section{Introduction} \label{sec:intro}
We consider the system of hyperbolic conservation laws of the form
\begin{equation} \label{eq:hyperbolic}
\begin{aligned}
 \bfu_t + \sum_{i=1}^3 \bff_i(\bfu)_{x_i} &= 0, \\
                            \bfu (\bfx, 0) &= \bfu_0(\bfx), 
\end{aligned}
\end{equation}
where $\bfu(\bfx,t) = (u_1(\bfx,t), \cdots, u_m(\bfx,t))^T$ is a column vector of $m$ conserved unknowns, $\bfx = (x_1, x_2, x_3)$, and, for real $\xi = (\xi_1, \xi_2, \xi_3)$, the combination $\sum_{i=1}^3 \xi_i(\partial \bff / \partial \bfu)$ always has $m$ real eigenvalues and a complete set of eigenvectors.
As is well known, the solution to such system \eqref{eq:hyperbolic} may develop discontinuities even if the initial condition $\bfu_0(\bfx)$ is smooth.
The weighted essentially non-oscillatory (WENO) methods \cite{Balsara,Borges,GuoJungFV,GuoJungFD,Henrick,Liu,Jiang,ShuSpringer} gain the popularity among all numerical schemes since they can obtain the high order accurate approximations right up to discontinuities, while maintaining a sharp, essentially non-oscillatory shock transition.

Jiang and Shu \cite{Jiang} designed the fifth-order WENO scheme (WENO-JS) in the finite difference (FD) form with the smoothness indicators, which are given as a sum of the squares of scaled $L^2$ norms for all the derivatives of the interpolation polynomial over the desired cell.
However, the nonlinear weights $\omega_k$ in \cite{Jiang}, based on the smoothness indicators, failed to satisfy the sufficient condition for the fifth order accuracy at critical points.
To increase the accuracy of the nonlinear weights, Henrick et al. \cite{Henrick} introduced mapping functions that are applied to the nonlinear weights $\omega_k$. 
Not only do the resulting nonlinear weights $\omega_k^M$ meet the requirement of the fifth order accuracy, but also the mapped WENO scheme (WENO-M) produces sharper numerical solutions around discontinuities.
With a different approach, Borges et al. \cite{Borges} introduced the global smoothness indicator and devised Z-type nonlinear weights $\omega_k^Z$,
giving rise to the WENO-Z scheme. 
The WENO-Z scheme further decreases the dissipation near discontinuities, as well as maintains the essentially non-oscillatory behavior.
It is also observed that if the nonlinear weight(s), corresponding to the candidate substencil(s) with the discontinuity, is closer to the linear weight(s), then the WENO scheme generates sharper approximations around the discontinuity.
Nevertheless, to strike a balance between the order of accuracy at critical points and less dissipation near discontinuities, an appropriate value of $p$ has to be determined for the WENO-Z scheme.
For example, $p=1$ in \cite{Borges}.

We continue the path of Z-type nonlinear weights. 
By taking the $p$th root of the classical smoothness indicators and following the form of the Z-type nonlinear weights, we propose a new set of nonlinear weights $\omega_k^{ZR}$.
We show that the new nonlinear weights satisfy the sufficient condition for the fifth order accuracy, even at critical points.
It is also proved that the weights $\omega_k^{ZR}$ converge to the linear weights as $p \to \infty$.
As a result, the dissipation is reduced around the discontinuities with keeping the shock-capturing property if $p$ is properly chosen.

The next section will review the WENO-JS, WENO-M and WENO-Z schemes with an analysis of the nonlinear weights when a discontinuity is involved.
In Section 3, we introduce the new Z-type nonlinear weights $\omega_k^{ZR}$ and provide a detailed discussion about them.
Section 4 contains numerical experiments, including the 1D linear advection equation; 1D Riemann problem for Burgers' equation; 1D and 2D Euler equations of gas dynamics.
Concluding remarks are presented in Section 5.

\section{1D WENO finite difference schemes} \label{sec:1d_fd_WENO}
We begin with the description of fifth-order finite difference WENO scheme for the one-dimensional scalar hyperbolic conservation laws
\begin{equation} \label{eq:1D_hyperbolic}
 u_t(x,t) + f_x(u(x,t)) = 0.
\end{equation} 
Here we focus on the discussion of spatial discretization, while leaving the time variable $t$ continuous.
Assume that $\frac{\partial f}{\partial u} > 0$.
Then the wave is propagating to the right.
Given the computational domain $[a,b]$, we apply a uniform grid with $N+1$ points on the domain, 
$$
   x_i = a + i \Delta x,~i = 0, \cdots, N+1, 
$$
where $\Delta x = \frac{b-a}{N}$ is the grid spacing. 
Each grid point $x_i$ is also called the cell center for the $i$th cell $I_i = \left[ x_{i-1/2},~x_{i+1/2} \right]$, where the cell boundaries are defined as $x_{i \pm 1/2} = x_i \pm \Delta x/2$. 
If we fix the time $t$, the semi-discretization of Equation \eqref{eq:1D_hyperbolic}, by the method of lines, gives
\begin{equation} \label{eq:1D_hyperbolic_discrete}
 \frac{du_{i}(t)}{dt} = - \left. \frac{\partial f \left( u(x,t) \right)}{\partial x} \right |_{x=x_i},
\end{equation} 
where $u_i(t)$ is the numerical approximation to $u(x_i,t)$.
Define the flux function $h(x)$ implicitly by
$$
   f \left( u(x) \right) = \frac{1}{\Delta x} \int^{x+\Delta x/2}_{x-\Delta x/2} h(\xi) \dx \xi. 
$$
Here the time variable $t$ is dropped for the notational convenience. 
Differentiating both sides with respect to $x$ yields
\begin{equation} \label{eq:partial_f}
 \frac{\partial f}{\partial x} = \frac{h(x+\Delta x/2) - h(x-\Delta x/2)}{\Delta x}.
\end{equation}
Evaluating \eqref{eq:partial_f} at $x=x_i$ and plugging this into \eqref{eq:1D_hyperbolic_discrete}, we obtain 
\begin{equation} \label{eq:1D_hyperbolic_discrete_h}
 \frac{du_{i}(t)}{dt} = - \frac{h_{i+1/2} - h_{i-1/2}}{\Delta x},
\end{equation} 
with $h_{i \pm 1/2} = h(x_{i \pm 1/2})$.
Our goal is to reconstruct the fluxes $h_{i \pm 1/2}$ at cell boundaries to a high order accuracy, and simultaneously to avoid spurious oscillations
near shocks.

Since the advection speed is positive, a polynomial approximation $p(x)$ to $h(x)$ of degree at most four on the $5$-point stencil $S^5$, as shown in Figure \ref{fig:stencil}, is sufficient to approximate $h_{i+1/2}$.
\begin{figure}[h!]
\centering
\includegraphics[width=\textwidth]{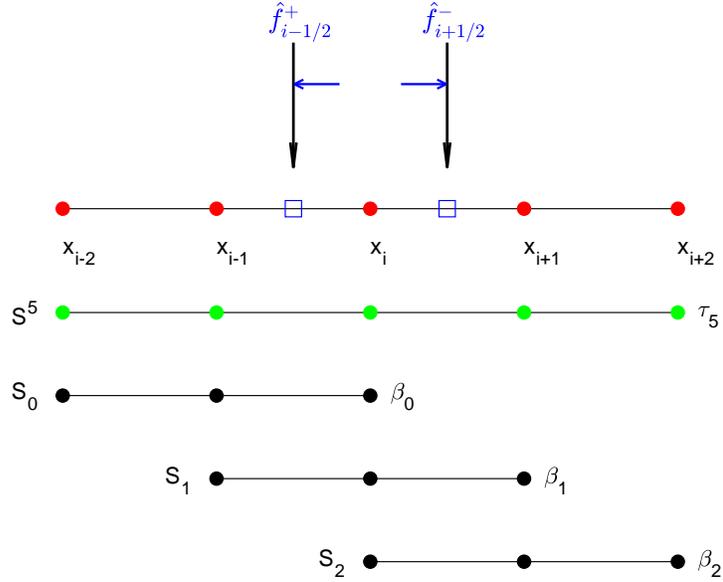}
\caption{The construction of numerical fluxes $\hat{f}^-_{i+1/2}$ and $\hat{f}^+_{i-1/2}$ depends on the stencil $S^5 = \{ x_{i-2}, \cdots, x_{i+2} \}$ with five uniform points, as well as three $3$-point substencils $S_0, S_1, S_2$.}
\label{fig:stencil}
\end{figure} 
Evaluating $p(x)$ at $x = x_{i+1/2}$ gives the finite difference numerical flux
\begin{equation} \label{eq:numerical_flux_approximation_plus}
 \hat{f}^{\FD}_{i+1/2} = \frac{1}{30} f_{i-2} - \frac{13}{60} f_{i-1} + \frac{47}{60} f_i + \frac{9}{20} f_{i+1} - \frac{1}{20} f_{i+2}
\end{equation}
with $f_j = f(u_j)$.
The numerical flux $\hat{f}_{i-1/2}$ is obtained by shifting one grid to the left, yielding
\begin{equation} \label{eq:numerical_flux_approximation_minus}
 \hat{f}^{\FD}_{i-1/2} = \frac{1}{30} f_{i-3} - \frac{13}{60} f_{i-2} + \frac{47}{60} f_{i-1} + \frac{9}{20} f_i - \frac{1}{20} f_{i+1}.
\end{equation}
Applying the Taylor expansions to $\hat{f}^{\FD}_{i \pm 1/2}$ \eqref{eq:numerical_flux_approximation_plus} and \eqref{eq:numerical_flux_approximation_minus} would give
\begin{align}
 \hat{f}^{\FD}_{i+1/2} &= h_{i+1/2} - \frac{1}{60} h_i^{(5)} \Delta x^5 + O(\Delta x^6), \label{eq:numerical_flux_approximation_plus_Taylor} \\
 \hat{f}^{\FD}_{i-1/2} &= h_{i-1/2} - \frac{1}{60} h_i^{(5)} \Delta x^5 + O(\Delta x^6). \label{eq:numerical_flux_approximation_minus_Taylor}
\end{align}
Substituting \eqref{eq:numerical_flux_approximation_plus_Taylor} and \eqref{eq:numerical_flux_approximation_minus_Taylor} for $h_{i \pm 1/2}$ in \eqref{eq:1D_hyperbolic_discrete_h}, we obtain the upwind scheme with fifth order accuracy in space
$$
   \frac{du_{i}(t)}{dt} = - \frac{\hat{f}^{\FD}_{i+1/2} - \hat{f}^{\FD}_{i-1/2}}{\Delta x} + O(\Delta x^5).
$$
The same procedure could be carried out on the $3$-point substencils $S_0, S_1$ and $S_2$ shown in Figure \ref{fig:stencil}, giving the numerical fluxes,
\begin{equation} \label{eq:numerical_flux_approximation_plus_substencil}
\begin{aligned}
 \hat{f}^0_{i+1/2} &=   \frac{1}{3} f_{i-2} - \frac{7}{6} f_{i-1} + \frac{11}{6} f_i, \\
 \hat{f}^1_{i+1/2} &= - \frac{1}{6} f_{i-1} + \frac{5}{6} f_i     + \frac{1}{3} f_{i+1}, \\
 \hat{f}^2_{i+1/2} &=   \frac{1}{3} f_i     + \frac{5}{6} f_{i+1} - \frac{1}{6} f_{i+2}.
\end{aligned}
\end{equation}
Taylor expansions of \eqref{eq:numerical_flux_approximation_plus_substencil} result in
\begin{equation*} 
\begin{aligned}
 \hat{f}^0_{i+1/2} &= h_{i+1/2} -  \frac{1}{4} h_i^{(3)} \Delta x^3 + O(\Delta x^4), \\
 \hat{f}^1_{i+1/2} &= h_{i+1/2} + \frac{1}{12} h_i^{(3)} \Delta x^3 + O(\Delta x^4), \\
 \hat{f}^2_{i+1/2} &= h_{i+1/2} - \frac{1}{12} h_i^{(3)} \Delta x^3 + O(\Delta x^4).
\end{aligned}
\end{equation*}
Then 
\begin{equation} \label{eq:numerical_flux_approximation_plus_substencil_Taylor}
 \hat{f}^k_{i+1/2} = h_{i+1/2} + A_k \Delta x^3 + O(\Delta x^4),~k=0,1,2.
\end{equation}
Similarly, an index shift by $-1$ returns the corresponding $\hat{f}^k_{i-1/2}$:
\begin{equation} \label{eq:numerical_flux_approximation_minua_substencil}
\begin{aligned}
 \hat{f}^0_{i-1/2} &=   \frac{1}{3} f_{i-3} - \frac{7}{6} f_{i-2} + \frac{11}{6} f_{i-1}, \\
 \hat{f}^1_{i-1/2} &= - \frac{1}{6} f_{i-2} + \frac{5}{6} f_{i-1} + \frac{1}{3}  f_i, \\
 \hat{f}^2_{i-1/2} &=   \frac{1}{3} f_{i-1} + \frac{5}{6} f_i     - \frac{1}{6}  f_{i+1},
\end{aligned}
\end{equation}
where, by Taylor expansion, 
\begin{equation} \label{eq:numerical_flux_approximation_minus_substencil_Taylor}
 \hat{f}^k_{i-1/2} = h_{i-1/2} + A_k \Delta x^3 + O(\Delta x^4),~k=0,1,2.
\end{equation}
It is clear that the linear combination of $\hat{f}^k_{i+1/2}$ in \eqref{eq:numerical_flux_approximation_plus_substencil} could reduce to $\hat{f}^{\FD}_{i+1/2}$ in \eqref{eq:numerical_flux_approximation_plus}. 
In other words, there are linear weights $d_0 = \frac{1}{10}, d_1 = \frac{3}{5}$ and $d_2 = \frac{3}{10}$ such that
\begin{equation} \label{eq:numerical_flux_approximation_linear_weights}
 \hat{f}^{\FD}_{i+1/2} = \sum_{k=0}^2 d_k \hat{f}^k_{i+1/2}.
\end{equation}

The WENO numerical flux $\hat{f}_{i+1/2}$, which is an approximation of $h_{i+1/2}$, takes a convex combination of three candidate numerical fluxes $\hat{f}^k_{i+1/2}$, 
\begin{equation} \label{eq:numerical_flux_approximation}
 \hat{f}_{i+1/2} = \sum_{k=0}^2 \omega_k \hat{f}^k_{i+1/2}.
\end{equation}
Note that the numerical flux $\hat{f}_{i+1/2}$ in \eqref{eq:numerical_flux_approximation} is actually $\hat{f}^-_{i+1/2}$ in Figure \ref{fig:stencil}.
We drop the superscript $-$ to simplify the notation.
The key to the success of WENO is the choice of the weights $\omega_k$, where it is required that
\begin{equation} \label{eq:weights}
 \omega_k \geqslant 0,~~\sum_{k=0}^2 \omega_k = 1
\end{equation}
for stability and consistency.
In \cite{Jiang}, the nonlinear weights $\omega^{\JS}_k$ in \eqref{eq:numerical_flux_approximation} are given by
\begin{equation} \label{eq:weights_JS}
 \omega^{\JS}_k = \frac{\alpha_k}{\sum^2_{s=0} \alpha_s},~~\alpha_k = \frac{d_k}{(\beta_k + \epsilon)^2},~~k=0,1,2.
\end{equation} 
Here $\epsilon>0$, e.g., $\epsilon = 10^{-6}$, is introduced to avoid the case that the denominator becomes zero.
The smoothness indicators $\beta_k$ in \eqref{eq:weights_JS} are defined as
$$
   \beta_k = \sum_{l=1}^2 \Delta x^{2l-1} \int_{x_{i-1/2}}^{x_{i+1/2}} \left( \frac{\dx^l}{\dx x^l} p_k(x) \right)^2 \dx x,
$$
and
\begin{equation} \label{eq:smooth_indicator_JS}
\begin{aligned}
 \beta_0 &= \frac{13}{12} \left( f_{i-2} - 2 f_{i-1} + f_i \right)^2 + \frac{1}{4} \left( f_{i-2} - 4 f_{i-1} + 3 f_i \right)^2, \\
 \beta_1 &= \frac{13}{12} \left( f_{i-1} - 2 f_i + f_{i+1} \right)^2 + \frac{1}{4} \left( f_{i-1} - f_{i+1} \right)^2, \\
 \beta_2 &= \frac{13}{12} \left( f_i - 2 f_{i+1} + f_{i+2} \right)^2 + \frac{1}{4} \left( 3 f_i - 4 f_{i+1} + f_{i+2} \right)^2.
\end{aligned}
\end{equation}
Expanding \eqref{eq:smooth_indicator_JS} in Taylor series around $x=x_i$ gives
\begin{equation} \label{eq:smooth_indicator_JS_Taylor}
\begin{aligned}
 \beta_0 &= f'^2_i \Delta x^2 + \left( \frac{13}{12} f''^2_i - \frac{2}{3} f'_i f'''_i \right) \Delta x^4 - \left( \frac{13}{6} f''_i f'''_i - \frac{1}{2} f'_i f^{(4)}_i \right) \Delta x^5 + O(\Delta x^6), \\
 \beta_1 &= f'^2_i \Delta x^2 + \left( \frac{13}{12} f''^2_i + \frac{1}{3} f'_i f'''_i \right) \Delta x^4 + O(\Delta x^6), \\
 \beta_2 &= f'^2_i \Delta x^2 + \left( \frac{13}{12} f''^2_i - \frac{2}{3} f'_i f'''_i \right) \Delta x^4 + \left( \frac{13}{6} f''_i f'''_i - \frac{1}{2} f'_i f^{(4)}_i \right) \Delta x^5 + O(\Delta x^6).
\end{aligned}
\end{equation}
In \cite{Henrick}, Henrick et al. introduced the mapping functions $g_k$ given by
$$
   g_k(\omega) = \frac{\omega (d_k + d_k^2 - 3 d_k \omega + \omega^2)}{d_k^2 + \omega (1 - 2d_k)},~k = 0,1,2,
$$
to define the mapped nonlinear weights $\omega^\M_k$ by
\begin{equation} \label{eq:weights_M}
 \omega^\M_k = \frac{\alpha^*_k}{\sum^2_{s=0} \alpha^*_s},~~\alpha^*_k = g_k(\omega^{\JS}_k),~~k=0,1,2,
\end{equation}
with $\omega^{\JS}_k$ the weights in \eqref{eq:weights_JS}.
In \cite{Borges}, Borges et al. introduced the global smoothness indicator $\tau_5$ as the absolute difference between $\beta_0$ and $\beta_2$,
$$
   \tau_5 = | \beta_0 - \beta_2 |.
$$
The Z-type nonlinear weights $\omega^\Z_k$ are defined as 
\begin{equation} \label{eq:weights_Z}
 \omega^\Z_k = \frac{\alpha_k}{\sum^2_{s=0} \alpha_s},~~\alpha_k = d_k \left( 1 + \left( \frac{\tau_5}{\beta_k + \epsilon} \right)^p \right),~~k=0,1,2.
\end{equation}

\subsection{Spatial fifth order accuracy in smooth regions} \label{sec:1d_fd_WENO_smooth}
Assume that the solution is smooth on the stencil $S^5$. 
In \cite{Henrick}, Henrick et al. derived the sufficient condition for the fifth order accuracy in space as follows.
Let
$$
   \hat{f}_{i \pm 1/2} = \sum_{k=0}^2 \omega^{\pm}_k \hat{f}^k_{i \pm 1/2}.
$$
Since
$$
   \hat{f}_{i+1/2} = \sum_{k=0}^2 d_k \hat{f}^k_{i+1/2} + \sum_{k=0}^2 ( \omega^+_k - d_k ) \hat{f}^k_{i+1/2} 
                   = \hat{f}^{\FD}_{i+1/2} + \sum_{k=0}^2 ( \omega^+_k - d_k ) \hat{f}^k_{i+1/2},
$$
and
\begin{align*}
 \sum_{k=0}^2 ( \omega^+_k - d_k ) \hat{f}^k_{i+1/2} &= \sum_{k=0}^2 ( \omega^+_k - d_k ) \left[ h_{i+1/2} + A_k \Delta x^3 + O(\Delta x^4) \right] \\
                                                     &= h_{i+1/2} \sum_{k=0}^2 ( \omega^+_k - d_k ) + \Delta x^3 \sum_{k=0}^2 A_k ( \omega^+_k - d_k ) + \sum_{k=0}^2 ( \omega^+_k - d_k ) O(\Delta x^4) \\
                                                     &= \Delta x^3 \sum_{k=0}^2 A_k ( \omega^+_k - d_k ) + \sum_{k=0}^2 ( \omega^+_k - d_k ) O(\Delta x^4), 
\end{align*}
then
$$
   \hat{f}_{i+1/2} = \hat{f}^{\FD}_{i+1/2} + \Delta x^3 \sum_{k=0}^2 A_k ( \omega^+_k - d_k ) + \sum_{k=0}^2 ( \omega^+_k - d_k ) O(\Delta x^4). 
$$
Similarly,
$$
 \hat{f}_{i-1/2} = \hat{f}^{\FD}_{i-1/2} + \Delta x^3 \sum_{k=0}^2 A_k ( \omega^-_k - d_k ) + \sum_{k=0}^2 ( \omega^-_k - d_k ) O(\Delta x^4). 
$$
By \eqref{eq:numerical_flux_approximation_plus_Taylor} and \eqref{eq:numerical_flux_approximation_minus_Taylor}, we have
\begin{align*}
 \frac{\hat{f}_{i+1/2} - \hat{f}_{i-1/2}}{\Delta x} = {} & \frac{h_{i+1/2} - h_{i-1/2}}{\Delta x} + O(\Delta x^5) + \Delta x^2 \sum_{k=0}^2 A_k ( \omega^+_k - \omega^-_k ) \\
                                                      {} & + \sum_{k=0}^2 ( \omega^+_k - d_k ) O(\Delta x^3) - \sum_{k=0}^2 ( \omega^-_k - d_k ) O(\Delta x^3).
\end{align*} 
Thus the sufficient condition for fifth order accuracy is given by
\begin{equation} \label{eq:WENO_condition}
 \omega_k - d_k = O(\Delta x^3),
\end{equation} 
where the superscripts are dropped, meaning that the nonlinear weights $\omega_k$ for each stencil $S^5$ should satisfy the condition \eqref{eq:WENO_condition} in order to attain the fifth order accuracy in space.

For the nonlinear weights $\omega^{\JS}_k$ in \eqref{eq:weights_JS}, it can be shown via Taylor expansion that $\omega^{\JS}_k = d_k + O(\Delta x^2)$ if $f'_i \ne 0$, while $\omega^{\JS}_k = d_k + O(\Delta x)$ if $f'_i = 0$.
So the weights $\omega^{\JS}_k$ do not satisfy the sufficient condition \eqref{eq:WENO_condition}. 
Taylor series expansion shows that the mapped nonlinear weights $\omega^\M_k$ satisfy $\omega^\M_k = d_k + O(\Delta x^3)$ regardless of the value of $f'_i$, which agrees with the sufficient condition \eqref{eq:WENO_condition}.
With $p=1$, the Z-type nonlinear weights $\omega^\Z_k$ implies that $\omega^\Z_k = d_k + O(\Delta x^3)$ if $f'_i \ne 0$, whereas $\omega^\Z_k = d_k + O(\Delta x)$ if $f'_i = 0$.
Hence the weights $\omega^\Z_k$ satisfy the sufficient condition \eqref{eq:WENO_condition} at non-critical points.

\subsection{Dissipation around discontinuities} \label{sec:1d_fd_WENO_discontinuity}
In \cite{Borges}, Borges et al. observed that if the WENO scheme assigns substantially large weight(s) to discontinuous stencil(s), the resulting dissipation near discontinuities is decreased, thus improving the behavior of shock-capturing.
In other words, there is a negative correlation between the nonlinear weights $\omega_k$ for the substencils with the discontinuity and the dissipation around the discontinuity.
We invoke the example of the linear advection equation in \cite{Borges}:
\begin{equation} \label{eq:weight_comparison}
\begin{aligned}
 u_t + u_x &= 0, \\
    u(x,0) &= \left\{ 
               \begin{array}{ll} 
                - \sin(\pi x) - \frac{1}{2} x^3,     & -1 < x < 0, \\ 
                - \sin(\pi x) - \frac{1}{2} x^3 + 1, & 0 \leqslant x \leqslant 1,
               \end{array} 
              \right.
\end{aligned}
\end{equation}
where the initial condition is a piecewise function with a jump discontinuity at $x=0$, to reproduce the nonlinear weights $\omega_k$ for the WENO-JS ($\epsilon = 10^{-6}$), WENO-M ($\epsilon = 10^{-40}$) and WENO-Z ($\epsilon = 10^{-40}$) schemes at the first step of the numerical solutions.
Figure \ref{fig:weight_comparison} and Table \ref{tab:weight_comparison} show those nonlinear weights $\omega_k$ around the discontinuity, compared with the linear weights $d_k$.
It is obvious that the nonlinear weights $\omega^\Z_k$ for the substencils containing the discontinuity are orders of magnitude larger than the corresponding nonlinear weights $\omega^{\JS}_k$.

\begin{figure}[h!]
\centering
 \begin{subfigure}[b]{0.32\textwidth}
 \centering
 \includegraphics[width=\textwidth]{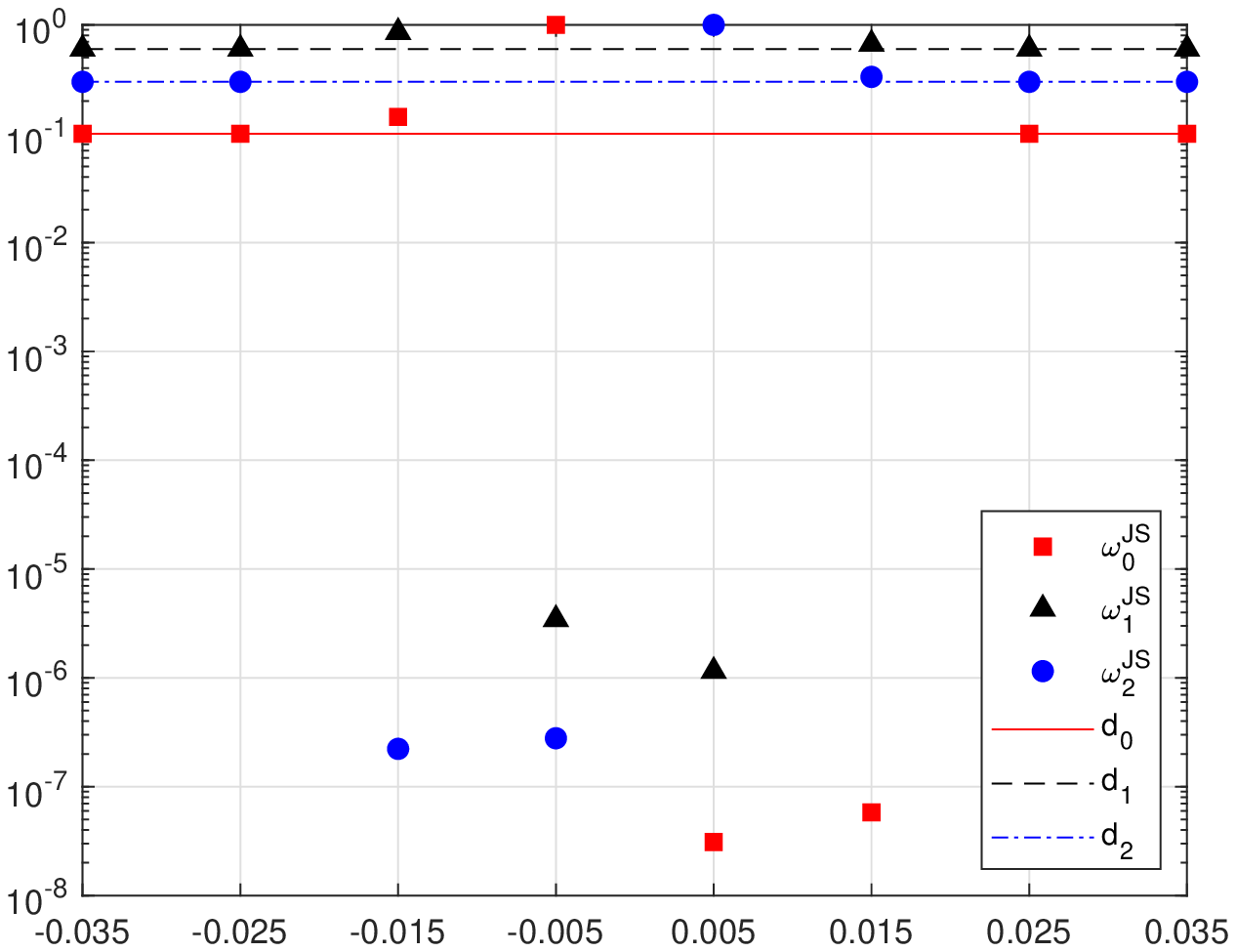}
 \caption{WENO-JS}
 \end{subfigure}
 \hfill
 \begin{subfigure}[b]{0.32\textwidth}
 \centering
 \includegraphics[width=\textwidth]{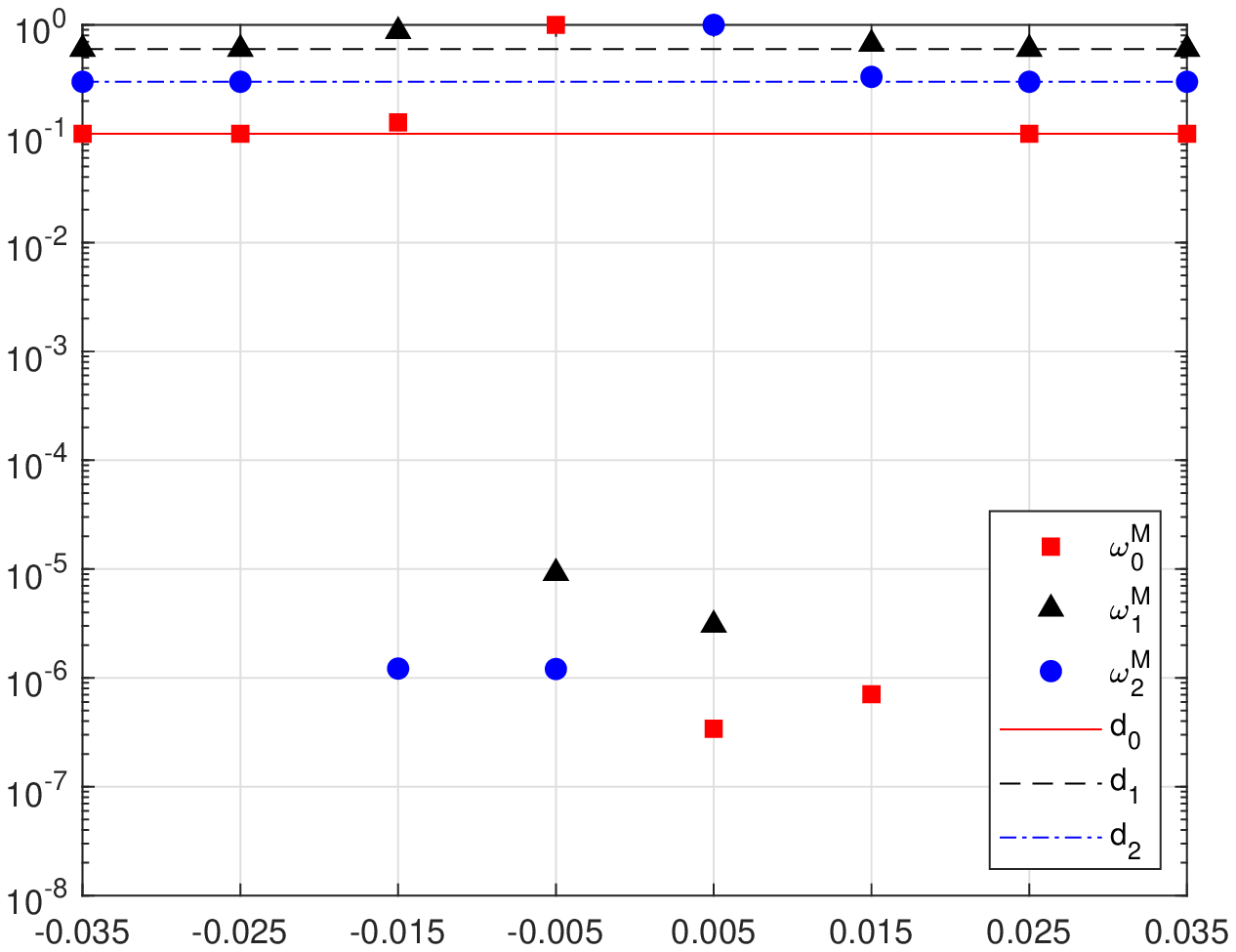}
 \caption{WENO-M}
 \end{subfigure}
 \hfill
 \begin{subfigure}[b]{0.32\textwidth}
 \centering
 \includegraphics[width=\textwidth]{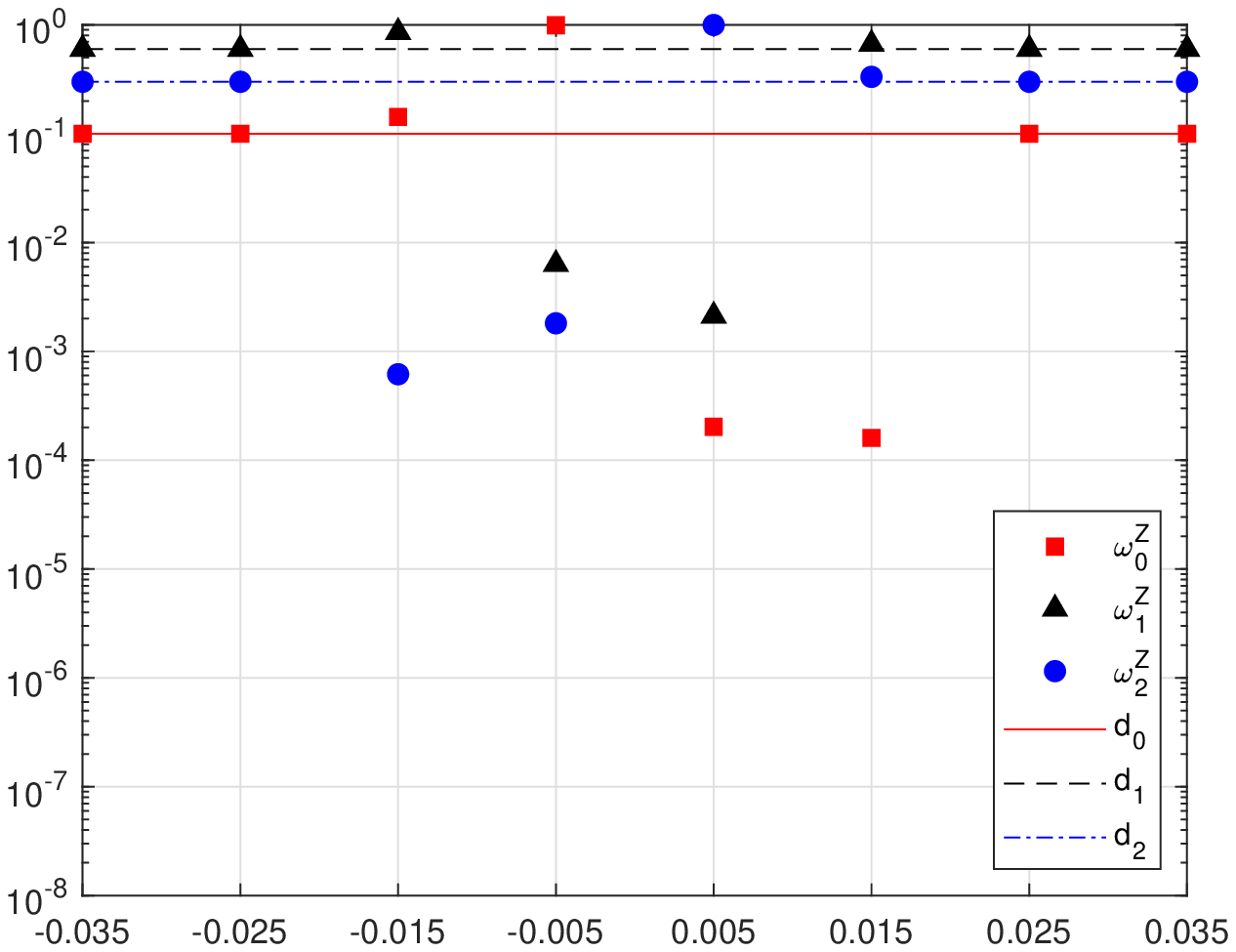}
 \caption{WENO-Z}
 \end{subfigure}
\caption{The distribution of the nonlinear weights $\omega_k$ near the discontinuity at $x=0$, compared with the linear weights $d_k$, for WENO-JS, WENO-M and WENO-Z at the first step of the numerical solutions of the test example \eqref{eq:weight_comparison} on a semilog plot with a $\log_{10}$ scale on the vertical axis. 
The linear weights $d_k$ are shown in lines while the nonlinear weights $\omega_k$ symbols.}
\label{fig:weight_comparison}
\end{figure}

\begin{table}[h!]
\renewcommand{\arraystretch}{1.2}
\centering
\begin{tabular}{ccccccccc|c} 
\hline
$x$ & -0.035 & -0.025 & -0.015 & -0.005 & 0.005 & 0.015 & 0.025 & 0.035 &  \\ 
\hline 
$\omega^{\JS}_0$ & 0.099892 & 0.099892 & 0.142639 & 0.999996 & 3.103e-8 & 5.804e-8 & 0.099894 & 0.099894 & \multirow{3}{*}{$d_0 = 0.1$} \\  
$\omega^\M_0$    & 0.100000 & 0.100000 & 0.127205 & 0.999990 & 3.413e-7 & 7.082e-7 & 0.100000 & 0.100000 & \\  
$\omega^\Z_0$    & 0.100000 & 0.100000 & 0.142660 & 0.991870 & 2.027e-4 & 1.604e-4 & 0.100000 & 0.100000 & \\
\hline 
$\omega^{\JS}_1$ & 0.600426 & 0.600426 & 0.857361 & 3.448e-6 & 1.151e-6 & 0.667063 & 0.600428 & 0.600428 & \multirow{3}{*}{$d_1 = 0.6$} \\  
$\omega^\M_1$    & 0.600000 & 0.600000 & 0.872794 & 9.195e-6 & 3.070e-6 & 0.667040 & 0.600000 & 0.600000 & \\  
$\omega^\Z_1$    & 0.600000 & 0.600000 & 0.856724 & 6.318e-3 & 2.120e-3 & 0.666758 & 0.600000 & 0.600000 & \\
\hline
$\omega^{\JS}_2$ & 0.299682 & 0.299681 & 2.226e-7 & 2.788e-7 & 0.999999 & 0.332937 & 0.299678 & 0.299678 & \multirow{3}{*}{$d_2 = 0.3$} \\  
$\omega^\M_2$    & 0.300000 & 0.300000 & 1.220e-6 & 1.208e-6 & 0.999997 & 0.332959 & 0.300000 & 0.300000 & \\  
$\omega^\Z_2$    & 0.300000 & 0.300000 & 6.166e-4 & 1.812e-3 & 0.997677 & 0.333082 & 0.300000 & 0.300000 & \\  
\hline
\end{tabular}
\caption{The values of the nonlinear weights $\omega_k$ near the discontinuity at $x=0$, compared with the linear weights $d_k$, for WENO-JS, WENO-M and WENO-Z at the first step of the numerical solutions of the test example \eqref{eq:weight_comparison}.}
\label{tab:weight_comparison}
\end{table}

Let us take a close look at how the nonlinear weights $\omega_k$ behave when there exists a discontinuity involved.
We assume that there is a discontinuity in the interval $[x_i, x_{i+1}]$, and except that, the solution is continuous at the nearby points.
Then there are four $5$-point stencils containing the discontinuity:
\begin{enumerate}[label=(\roman*)]
\item $\{ x_{i-3}, x_{i-2}, x_{i-1}, x_i, x_{i+1} \}$;
\item $\{ x_{i-2}, x_{i-1}, x_i, x_{i+1}, x_{i+2} \}$;
\item $\{ x_{i-1}, x_i, x_{i+1}, x_{i+2}, x_{i+3} \}$;
\item $\{ x_i, x_{i+1}, x_{i+2}, x_{i+3}, x_{i+4} \}$.
\end{enumerate}
By setting $\epsilon = 0$, \eqref{eq:weights_JS} implies that
\begin{align}
 \omega^{\JS}_0 &= \frac{1}{d_0 + d_1 \left( \frac{\beta_0}{\beta_1} \right)^2 + d_2 \left( \frac{\beta_0}{\beta_2} \right)^2} d_0, \label{eq:weights_JS_0} \\
 \omega^{\JS}_1 &= \frac{1}{d_0 \left( \frac{\beta_1}{\beta_0} \right)^2 + d_1 + d_2 \left( \frac{\beta_1}{\beta_2} \right)^2} d_1, \label{eq:weights_JS_1} \\
 \omega^{\JS}_2 &= \frac{1}{d_0 \left( \frac{\beta_2}{\beta_0} \right)^2 + d_1 \left( \frac{\beta_2}{\beta_1} \right)^2 + d_2} d_2. \label{eq:weights_JS_2}
\end{align}
Similarly, with \eqref{eq:weights_Z}, we have
\begin{align}
 \omega^\Z_0 &= \frac{1}{d_0 + d_1 \frac{1 + \frac{\tau_5}{\beta_1}}{1 + \frac{\tau_5}{\beta_0}} + d_2 \frac{1 + \frac{\tau_5}{\beta_2}}{1 + \frac{\tau_5}{\beta_0}}} d_0, \label{eq:weights_Z_0} \\
 \omega^\Z_1 &= \frac{1}{d_0 \frac{1 + \frac{\tau_5}{\beta_0}}{1 + \frac{\tau_5}{\beta_1}} + d_1 + d_2 \frac{1 + \frac{\tau_5}{\beta_2}}{1 + \frac{\tau_5}{\beta_1}}} d_1, \label{eq:weights_Z_1} \\
 \omega^\Z_2 &= \frac{1}{d_0 \frac{1 + \frac{\tau_5}{\beta_0}}{1 + \frac{\tau_5}{\beta_2}} + d_1 \frac{1 + \frac{\tau_5}{\beta_1}}{1 + \frac{\tau_5}{\beta_2}} + d_2} d_2. \label{eq:weights_Z_2}
\end{align}

For case (i), the smoothness indicators satisfy $\beta_0, \beta_1 = O(\Delta x^2)$ and $\beta_2 = O(1)$. 
Then $\tau_5 = | \beta_0 - \beta_2 | \approx \beta_2$ and 
\begin{equation} \label{eq:betak2_ineq}
 \beta_k \ll \beta_2,~~1 + \frac{\tau_5}{\beta_k} \gg 1 + \frac{\tau_5}{\beta_2},~~k = 0,1.
\end{equation}
It follows that
\begin{equation} \label{eq:betak2}
 \frac{\beta_k}{\beta_2} = o(1),~~\frac{1 + \frac{\tau_5}{\beta_2}}{1 + \frac{\tau_5}{\beta_k}} = o(1),~~k = 0,1.
\end{equation}
Since $\displaystyle{\lim_{\Delta x \to 0} \beta_1 / \beta_0 = 1}$, we see that 
$$
   \lim_{\Delta x \to 0} \frac{1 + \frac{\tau_5}{\beta_0}}{1 + \frac{\tau_5}{\beta_1}} - 1 
 = \lim_{\Delta x \to 0} \frac{\frac{\beta_1}{\beta_0} - 1}{1 + \frac{\beta_1}{\tau_5}} = 0,
$$
which gives
\begin{equation} \label{eq:beta01}
 \frac{\beta_0}{\beta_1} = 1 + o(1),~\frac{1 + \frac{\tau_5}{\beta_0}}{1 + \frac{\tau_5}{\beta_1}} = 1 + o(1).
\end{equation}
From \eqref{eq:betak2} and \eqref{eq:beta01}, we find
$$
   d_0 + d_1 \left( \frac{\beta_0}{\beta_1} \right)^2 + d_2 \left( \frac{\beta_0}{\beta_2} \right)^2 = d_0 + d_1 \left( 1 + o(1) \right)^2 + d_2 \, o(1) = d_0 + d_1 + o(1) < d_0 + d_1 + d_2 = 1,
$$
and
$$
   d_0 + d_1 \frac{1 + \frac{\tau_5}{\beta_1}}{1 + \frac{\tau_5}{\beta_0}} + d_2 \frac{1 + \frac{\tau_5}{\beta_2}}{1 + \frac{\tau_5}{\beta_0}} = d_0 + d_1 \left( 1 + o(1) \right) + d_2 \, o(1) = d_0 + d_1 + o(1) < d_0 + d_1 + d_2 = 1,
$$
and hence $\omega^{\JS}_0 > d_0$ and $\omega^\Z_0 > d_0$ by \eqref{eq:weights_JS_0} and \eqref{eq:weights_Z_0}, respectively.
Using the same approach, we obtain $\omega^{\JS}_1,~\omega^\Z_1 > d_1$.
Note that for $k=0,1$,
$$
   \lim_{\Delta x \to 0} \frac{\beta_k + \tau_5}{\beta_2 + \tau_5} = \lim_{\Delta x \to 0} \frac{\beta_k + \beta_2 - \beta_0}{\beta_2 + \beta_2 - \beta_0} = \lim_{\Delta x \to 0} \frac{\beta_k - \beta_0}{2 \beta_2 - \beta_0} + \lim_{\Delta x \to 0} \frac{\beta_2}{2 \beta_2 - \beta_0}= \frac{1}{2}.
$$
We thus expect that if $\Delta x$ is small enough,
\begin{equation} \label{eq:betak2_tau5_approx}
 \frac{\beta_k + \tau_5}{\beta_2 + \tau_5} \approx \frac{1}{2}.  
\end{equation}
Then we have from $\beta_2 \gg \beta_k$ and \eqref{eq:betak2_tau5_approx},
$$
   \frac{\beta_2}{\beta_k} \left( \frac{\beta_2}{\beta_k} - \frac{\beta_k + \tau_5}{\beta_1 + \tau_5} \right) \gg 1 > 0,
$$
and hence
$$
   \left( \frac{\beta_2}{\beta_k} \right)^2 \gg \frac{\beta_2}{\beta_k} \frac{\beta_k + \tau_5}{\beta_2 + \tau_5} = \frac{1 + \frac{\tau_5}{\beta_k}}{1 + \frac{\tau_5}{\beta_2}}.
$$
The above inequality and \eqref{eq:betak2_ineq} give
$$
   d_0 \left( \frac{\beta_2}{\beta_0} \right)^2 + d_1 \left( \frac{\beta_2}{\beta_1} \right)^2 + d_2 \gg d_0 \frac{1 + \frac{\tau_5}{\beta_0}}{1 + \frac{\tau_5}{\beta_2}} + d_1 \frac{1 + \frac{\tau_5}{\beta_1}}{1 + \frac{\tau_5}{\beta_2}} + d_2 \gg d_0 + d_1 + d_2 = 1,
$$
so $\omega^{\JS}_2 \ll \omega^\Z_2 \ll d_2$ by \eqref{eq:weights_JS_2} and \eqref{eq:weights_Z_2}.
Rewriting $\omega^{\JS}_2$ in \eqref{eq:weights_JS_2} and $\omega^\Z_2$ in \eqref{eq:weights_Z_2}, we find 
\begin{align*}
 \lim_{\Delta x \to 0} \omega^{\JS}_2 &= \lim_{\Delta x \to 0} \frac{d_2 \beta_0^2 \beta_1^2}{d_0 \beta_1^2 \beta_2^2 + d_1 \beta_0^2 \beta_2^2 + d_2 \beta_0^2 \beta_1^2} = 0, \\
 \lim_{\Delta x \to 0} \omega^\Z_2 &= \lim_{\Delta x \to 0} \frac{d_2 \beta_0 \beta_1}{d_0 \beta_1 \beta_2 \frac{\beta_0 + \tau_5}{\beta_2 + \tau_5} + d_1 \beta_0 \beta_2 \frac{\beta_1 + \tau_5}{\beta_2 + \tau_5} + d_2 \beta_0 \beta_1} = 0.
\end{align*}
Then $\omega^{\JS}_2, \omega^Z_2 \gtrapprox 0$, which follows from the above limits and the requirement \eqref{eq:weights}. 
Therefore, 
$$ 
   \omega^{\JS}_0,~\omega^\Z_0 > d_0,~~\omega^{\JS}_1,~\omega^\Z_1 > d_1,~~\omega^{\JS}_2 \ll \omega^\Z_2 \ll d_2,~~\omega^{\JS}_2, \omega^Z_2 \gtrapprox 0,
$$
which agree precisely with the results at $x=-0.015$ in Table \ref{tab:weight_comparison}.

Next we consider the case (ii), where $\beta_0= O(\Delta x^2)$ and $\beta_1, \beta_2 = O(1)$.
Then $\tau_5 = | \beta_0 - \beta_2 | \approx \beta_2$ and 
$$
   \beta_0 \ll \beta_k,~~1 + \frac{\tau_5}{\beta_0} \gg 1 + \frac{\tau_5}{\beta_k},~~k = 1,2,
$$
which in turn implies 
\begin{equation} \label{eq:beta0k}
 \frac{\beta_0}{\beta_k} = o(1),~~\frac{1 + \frac{\tau_5}{\beta_k}}{1 + \frac{\tau_5}{\beta_0}} = o(1),~~k = 1,2.
\end{equation}
Applying \eqref{eq:beta0k} to $\omega^{\JS}_0$ in \eqref{eq:weights_JS_0} and $\omega^\Z_0$ in \eqref{eq:weights_Z_0}, we find
\begin{align*}
 \omega^{\JS}_0 &= \frac{1}{d_0 + d_1 \, o(1) + d_2 \, o(1)} d_0 = 1 + o(1), \\
 \omega^Z_0 &= \frac{1}{d_0 + d_1 \, o(1) + d_2 \, o(1)} d_0 = 1 + o(1).
\end{align*}
Combining this with the requirement \eqref{eq:weights} gives $\omega^{\JS}_0, \omega^Z_0 \lessapprox 1$.
Since, for $k=1,2$,
$$
   \frac{\beta_0}{\beta_k} - \frac{\beta_k + \tau_5}{\beta_0 + \tau_5} = \frac{(\beta^2_0 - \beta^2_k) + \tau_5 (\beta_0 - \beta_k)}{\beta_k (\beta_0 + \tau_5)} < 0,
$$
we have 
$$
   \left( \frac{\beta_0}{\beta_k} \right)^2 < \frac{\beta_0}{\beta_k} \frac{\beta_k + \tau_5}{\beta_0 + \tau_5} = \frac{1 + \frac{\tau_5}{\beta_k}}{1 + \frac{\tau_5}{\beta_0}},
$$
and so
$$
   d_0 + d_1 \left( \frac{\beta_0}{\beta_1} \right)^2 + d_2 \left( \frac{\beta_0}{\beta_2} \right)^2 < d_0 + d_1 \frac{1 + \frac{\tau_5}{\beta_1}}{1 + \frac{\tau_5}{\beta_0}} + d_2 \frac{1 + \frac{\tau_5}{\beta_2}}{1 + \frac{\tau_5}{\beta_0}} < d_0 + d_1 + d_2 = 1,
$$
and hence $\omega^{\JS}_0 > \omega^\Z_0 > d_0$ according to \eqref{eq:weights_JS_0} and \eqref{eq:weights_Z_0}.
We can rewrite $\beta_0, \beta_1$ and $\beta_2$ in \eqref{eq:smooth_indicator_JS} as
\begin{align*}
 \beta_0 &= \frac{13}{12} \left( (f_{i-2}-f_{i-1}) + (f_i-f_{i-1}) \right)^2 + \frac{1}{4} \left( (f_{i-2}-f_{i-1}) + 3 (f_i-f_{i-1}) \right)^2, \\
 \beta_1 &= \frac{13}{12} \left( (f_{i-1} - f_i) + (f_{i+1} - f_i) \right)^2 + \frac{1}{4} \left( (f_{i-1} - f_i) + (f_i - f_{i+1}) \right)^2, \\
 \beta_2 &= \frac{13}{12} \left( (f_i-f_{i+1}) + (f_{i+2}-f_{i+1}) \right)^2 + \frac{1}{4} \left( 3 (f_i-f_{i+1}) + (f_{i+2}-f_{i+1}) \right)^2.
\end{align*}
Since $f_{i-2}-f_{i-1}, f_{i-1}-f_i, f_{i+1}-f_{i+2} = O(\Delta x)$ and $f_i-f_{i+1} = O(1)$, we obtain
$$
   \lim_{\Delta x \to 0} \frac{\beta_1}{\beta_2} = \lim_{\Delta x \to 0} \frac{\frac{13}{12} \left( O(\Delta x) + (f_{i+1} - f_i) \right)^2 + \frac{1}{4} \left( O(\Delta x) + (f_i - f_{i+1}) \right)^2}{\frac{13}{12} \left( (f_i - f_{i+1}) + O(\Delta x) \right)^2 + \frac{1}{4} \left( 3 (f_i - f_{i+1}) + O(\Delta x) \right)^2} = \frac{2}{5}.
$$
Using the same technique yields
$$
 \lim_{\Delta x \to 0} \frac{\beta_0 + \tau_5}{\beta_1 + \tau_5} = \frac{5}{7},~~\lim_{\Delta x \to 0} \frac{\beta_2 + \tau_5}{\beta_1 + \tau_5} = \frac{10}{7}. 
$$
It is expected that 
\begin{align}
 \frac{\beta_1}{\beta_2} & \approx \frac{2}{5},                    \label{eq:beta12_approx} \\
 \frac{\beta_0 + \tau_5}{\beta_1 + \tau_5} & \approx \frac{5}{7},  \label{eq:beta01_tau5_approx} \\
 \frac{\beta_2 + \tau_5}{\beta_1 + \tau_5} & \approx \frac{10}{7}, \label{eq:beta12_tau5_approx}
\end{align}
provided that $\Delta x$ is small.
Now we have from $\beta_1 \gg \beta_0$ and \eqref{eq:beta01_tau5_approx},
\begin{equation} \label{eq:omega1_JS_Z_cond1}
 \frac{\beta_1}{\beta_0} \left( \frac{\beta_1}{\beta_0} - \frac{\beta_0 + \tau_5}{\beta_1 + \tau_5} \right) \gg 1,
\end{equation}
and from \eqref{eq:beta12_approx} and \eqref{eq:beta12_tau5_approx},
\begin{equation} \label{eq:omega1_JS_Z_cond2}
 \frac{d_2}{d_0} \frac{\beta_1}{\beta_2} \left( \frac{\beta_2 + \tau_5}{\beta_1 + \tau_5} - \frac{\beta_1}{\beta_2} \right) \approx \frac{216}{175}.
\end{equation}
Then we conclude from \eqref{eq:omega1_JS_Z_cond1} and \eqref{eq:omega1_JS_Z_cond2} that
$$
   \frac{\beta_1}{\beta_0} \left( \frac{\beta_1}{\beta_0} - \frac{\beta_0 + \tau_5}{\beta_1 + \tau_5} \right) \gg \frac{d_2}{d_0} \frac{\beta_1}{\beta_2} \left( \frac{\beta_2 + \tau_5}{\beta_1 + \tau_5} - \frac{\beta_1}{\beta_2} \right)
$$
and hence
\begin{equation} \label{eq:omega1_JS_Z}
 d_0 \left( \frac{\beta_1}{\beta_0} \right)^2 + d_1 + d_2 \left( \frac{\beta_1}{\beta_2} \right)^2 \gg d_0 \frac{1 + \frac{\tau_5}{\beta_0}}{1 + \frac{\tau_5}{\beta_1}} + d_1 + d_2 \frac{1 + \frac{\tau_5}{\beta_2}}{1 + \frac{\tau_5}{\beta_1}}.
\end{equation}
Similarly, by $\beta_1 \gg \beta_0$ and \eqref{eq:beta01_tau5_approx},
\begin{equation} \label{eq:omega1_Z_cond1}
 d_0 \frac{1 + \frac{\tau_5}{\beta_0}}{1 + \frac{\tau_5}{\beta_1}} = d_0 \frac{\beta_1}{\beta_0} \frac{\beta_0 + \tau_5}{\beta_1 + \tau_5} \gg 1,
\end{equation}
and by \eqref{eq:beta12_approx} and \eqref{eq:beta12_tau5_approx},
\begin{equation} \label{eq:omega1_Z_cond2}
 d_2 \frac{1 + \frac{\tau_5}{\beta_2}}{1 + \frac{\tau_5}{\beta_1}} = d_2 \frac{\beta_1}{\beta_2} \frac{\beta_2 + \tau_5}{\beta_1 + \tau_5} \approx \frac{6}{35}.
\end{equation}
The inequality \eqref{eq:omega1_Z_cond1}, $d_1 = \frac{3}{5}$ and the approximation \eqref{eq:omega1_Z_cond2} give
\begin{equation} \label{eq:omega1_Z}
 d_0 \frac{1 + \frac{\tau_5}{\beta_0}}{1 + \frac{\tau_5}{\beta_1}} + d_1 + d_2 \frac{1 + \frac{\tau_5}{\beta_2}}{1 + \frac{\tau_5}{\beta_1}} \gg 1.
\end{equation}
Combining the inequalities \eqref{eq:omega1_JS_Z} and \eqref{eq:omega1_Z} above with \eqref{eq:weights_JS_1} and \eqref{eq:weights_Z_1}, we arrive at the conclusion that $\omega^{\JS}_1 \ll \omega^\Z_1 \ll d_1$.
A similar argument leads to the inequality $\omega^{\JS}_2 \ll \omega^\Z_2 \ll d_2$.
In summary,
$$
   \omega^{\JS}_0, \omega^Z_0 \lessapprox 1,~~\omega^{\JS}_0 > \omega^\Z_0 > d_0,~~\omega^{\JS}_1 \ll \omega^\Z_1 \ll d_1,~~\omega^{\JS}_2 \ll \omega^\Z_2 \ll d_2,
$$
matching the observations at $x=-0.005$ in Table \ref{tab:weight_comparison}.

The analysis of case (iii) is analogous to what is done in case (ii), with the following results
$$
   \omega^{\JS}_0 \ll \omega^\Z_0 \ll d_0,~~\omega^{\JS}_1 \ll \omega^\Z_1 \ll d_1,~~\omega^{\JS}_2 > \omega^\Z_2 > d_2,~~\omega^{\JS}_2, \omega^Z_2 \lessapprox 1.
$$
The results for case (iv) can be obtained in the same manner as the analysis in case (i), which gives
$$ 
   \omega^{\JS}_0, \omega^Z_0 \gtrapprox 0,~~\omega^{\JS}_0 \ll \omega^\Z_0 \ll d_0,~~\omega^{\JS}_1,~\omega^\Z_1 > d_1,~~\omega^{\JS}_2,~\omega^\Z_2 > d_2. 
$$

%
These inequalities concerning the weights $\omega^{\JS}_k,~ \omega^\Z_k$ and $d_k$, which are consistent with the numerical results of the test example \eqref{eq:weight_comparison}, are summarized in Table \ref{tab:S5_weight_comparison} below.
We will generalize this approach to the new nonlinear weights in the next section, where a very similar analysis can be applied.

\begin{table}[ht]
\renewcommand{\arraystretch}{1.2}
\centering
\begin{tabular}{c|cccc} 
\hline
$S^5$ & \multicolumn{4}{c}{$\omega^{\JS}_k,~\omega^\Z_k$ and $d_k$} \\ 
\hline 
$\{ x_{i-3}, x_{i-2}, x_{i-1}, {\color{red} x_i}, {\color{red} x_{i+1}} \}$ & $\omega^{\JS}_0,~\omega^\Z_0 > d_0$, & $\omega^{\JS}_1,~\omega^\Z_1 > d_1$, & $\omega^{\JS}_2 \ll \omega^\Z_2 \ll d_2$, & $\omega^{\JS}_2,~\omega^Z_2 \gtrapprox 0$ \\
$\{ x_{i-2}, x_{i-1}, {\color{red} x_i}, {\color{red} x_{i+1}}, x_{i+2} \}$ & $\omega^{\JS}_0 > \omega^\Z_0 > d_0$, & $\omega^{\JS}_1 \ll \omega^\Z_1 \ll d_1$, & $\omega^{\JS}_2 \ll \omega^\Z_2 \ll d_2$, & $\omega^{\JS}_0,~\omega^Z_0 \lessapprox 1$ \\
$\{ x_{i-1}, {\color{red} x_i}, {\color{red} x_{i+1}}, x_{i+2}, x_{i+3} \}$ & $\omega^{\JS}_0 \ll \omega^\Z_0 \ll d_0$, & $\omega^{\JS}_1 \ll \omega^\Z_1 \ll d_1$, & $\omega^{\JS}_2 > \omega^\Z_2 > d_2$, & $\omega^{\JS}_2,~\omega^Z_2 \lessapprox 1$ \\
$\{ {\color{red} x_i}, {\color{red} x_{i+1}}, x_{i+2}, x_{i+3}, x_{i+4} \}$ & $\omega^{\JS}_0 \ll \omega^\Z_0 \ll d_0$, & $\omega^{\JS}_1,~\omega^\Z_1 > d_1$, & $\omega^{\JS}_2,~\omega^\Z_2 > d_2$, & $\omega^{\JS}_0,~\omega^Z_0 \gtrapprox 0$ \\  
\hline
\end{tabular}
\caption{The deviation of the nonlinear weights $\omega_k$ for WENO-JS and WENO-Z from the linear weights $d_k$ when there is a discontinuity between the cell centers ${\color{red} x_i}$ and ${\color{red} x_{i+1}}$.}
\label{tab:S5_weight_comparison}
\end{table}

\section{New Z-type nonlinear weights} \label{sec:new_Z_weight}
For the nonlinear weights $\omega^\Z_k$ in \cite{Borges}, there is a dilemma between the order of accuracy at critical points in smooth regions and the numerical dissipation around discontinuities. 
On the one hand, increasing the value of $p$ in \eqref{eq:weights_Z} would recover the fifth order accuracy at first-order critical points, where the first derivative is zero, in smooth regions.
On the other hand, increasing the value of $p$ decreases the relative importance of the discontinuous substencil, leading to more dissipation around discontinuities, which implies that the approximation with $p>1$ is not as sharp as the one with $p=1$ for WENO-Z.
It is also pointed out that scaling down the numerical dissipation around discontinuities far outweighs attaining the order of accuracy at critical points in smooth regions when solving problems with shocks.
Then $p=1$ is chosen for practical use.

We introduce the new Z-type nonlinear weights $\omega_k$ so that the dilemma above could be conquered, meaning that increasing the value of $p$ can result in both the fifth order accuracy at critical points in smooth regions and the sharper approximation near discontinuities.
The key is to take the $p$th root of the smoothness indicators $\beta_k$ in \eqref{eq:smooth_indicator_JS}.
Then the novel global smoothness indicator $\tau^{\ZR}_5$ on the stencil $S^5$ is given by 
\begin{equation} \label{eq:global_smooth_indicator_ZR}
 \tau^{\ZR}_5 = \left| \sqrt[p]{\beta_0} - \sqrt[p]{\beta_2} \right|, 
\end{equation}
and the nonlinear weights, which is denoted by $\omega^{\ZR}_k$, are defined as
\begin{equation} \label{eq:weights_ZR}
 \omega^{\ZR}_k = \frac{\alpha_k}{\sum^2_{s=0} \alpha_s},~~\alpha_k = d_k \left( 1 + \left( \frac{\tau^{\ZR}_5}{\sqrt[p]{\beta_k} + \epsilon} \right)^p \right),~~k=0,1,2.
\end{equation}
Note that for $p=1$, the nonlinear weights $\omega^{\ZR}_k$ in \eqref{eq:weights_ZR} coincide with $\omega^\Z_k$ in \eqref{eq:weights_Z}.
We define $T_5: (0, \infty) \to \R$ by
\begin{equation} \label{eq:global_smooth_indicator_ZR_p}
 T_5(p) = \left( \tau^{\ZR}_5 \right)^p = \left| \sqrt[p]{\beta_0} - \sqrt[p]{\beta_2} \right|^p 
\end{equation}
for the analysis in the rest of the section, where $\epsilon$ is taken to be $0$.
If $\beta_0 \ne \beta_2$, by Corollary \ref{cor:phi} in the Appendix, $T_5$ is strictly decreasing on $(0, \infty)$ and
\begin{align}
 & \lim_{p \to 0^+} T_5(p) = \max \{ \beta_0, \beta_2 \}, \label{eq:T5_limit_0} \\
 & \lim_{p \to \infty} T_5(p) = 0, \label{eq:T5_limit_infty} \\
 & 0 < T_5(p) < \max \{ \beta_0, \beta_2 \}. \label{eq:T5_range}
\end{align}
See Appendix for the proof in detail.
Otherwise $T_5(p) = 0$ for all $p>0$.
Note that, in the limit $p \to \infty$, we have from \eqref{eq:T5_limit_infty} 
$$
   \lim_{p \to \infty} \alpha_k = \lim_{p \to \infty} d_k \left( 1 + \frac{T_5(p)}{\beta_k} \right) = d_k,
$$
and hence
\begin{equation} \label{eq:weights_ZR_infty}
 \lim_{p \to \infty} \omega^{\ZR}_k = \lim_{p \to \infty} \frac{\alpha_k}{\sum^2_{s=0} \alpha_s} = d_k.
\end{equation}
Accordingly, $\omega^{\ZR}_k \to d_k$ as $p \to \infty$, which returns to the classical finite difference scheme with Gibbs oscillations.

\subsection{Spatial fifth order accuracy in smooth regions for $\omega^{\ZR}_k$} \label{sec:new_Z_weight_smooth}
As in Section \ref{sec:1d_fd_WENO_smooth}, we assume the smooth solution over the stencil $S^5$. 
We want to explore the convergence of nonlinear weights $\omega^{\ZR}_k$ to linear weights $d_k$ in the smooth region.

We first consider the case where there is no critical point at $x = x_i$, that is, $f'_i \ne 0$.
The $p$th root of each smoothness indicator, $\sqrt[p]{\beta_k}$, in Taylor series at $x = x_i$ is
\begin{equation} \label{eq:smooth_indicator_ZR_Taylor}
\begin{aligned}
 \sqrt[p]{\beta_0} = f'^{2/p}_i \Delta x^{2/p} + \left( \frac{13}{12p} f''^2_i - \frac{2}{3p} f'_i f'''_i \right) f'^{2/p-2}_i \Delta x^{2+2/p} &- \left( \frac{13}{6p} f''_i f'''_i - \frac{1}{2p} f'_i f^{(4)}_i \right) f'^{2/p-2}_i \Delta x^{3+2/p} \\
  &+ O \left( \Delta x^{4+2/p} \right), \\
 \sqrt[p]{\beta_1} = f'^{2/p}_i \Delta x^{2/p} + \left( \frac{13}{12p} f''^2_i + \frac{1}{3p} f'_i f'''_i \right) f'^{2/p-2}_i \Delta x^{2+2/p} &+ O \left( \Delta x^{4+2/p} \right), \\
 \sqrt[p]{\beta_2} = f'^{2/p}_i \Delta x^{2/p} + \left( \frac{13}{12p} f''^2_i - \frac{2}{3p} f'_i f'''_i \right) f'^{2/p-2}_i \Delta x^{2+2/p} &+ \left( \frac{13}{6p} f''_i f'''_i - \frac{1}{2p} f'_i f^{(4)}_i \right) f'^{2/p-2}_i \Delta x^{3+2/p} \\
 &+ O \left( \Delta x^{4+2/p} \right).
\end{aligned}
\end{equation}
Then the global smoothness indicator $\tau^{\ZR}_5$ in \eqref{eq:global_smooth_indicator_ZR} is
\begin{equation} \label{eq:global_smooth_indicator_ZR_Taylor}
 \tau^{\ZR}_5 = \left| \left( \frac{13}{3p} f''_i f'''_i - \frac{1}{p} f'_i f^{(4)}_i \right) f'^{2/p-2}_i \right| \Delta x^{3+2/p} + O \left( \Delta x^{4+2/p} \right).
\end{equation}
With $\epsilon = 0$, we have from \eqref{eq:smooth_indicator_ZR_Taylor} and \eqref{eq:global_smooth_indicator_ZR_Taylor}
$$
   \frac{\tau^{\ZR}_5}{\sqrt[p]{\beta_k}} = O(\Delta x^3),
$$
and by \eqref{eq:weights_ZR},
\begin{equation} \label{eq:ZR_condition}
 \omega^{\ZR}_k = d_k + O(\Delta x^{3p}).
\end{equation}
Thus the sufficient condition \eqref{eq:WENO_condition} to achieve the spatial fifth order accuracy is clearly satisfied for $p \geqslant 1$, provided that the critical point does not exist at $x = x_i$.

In the case where $f'_i = 0$ but $f''_i \ne 0$, we cannot use \eqref{eq:smooth_indicator_ZR_Taylor} directly. 
Instead the Taylor expansions of $\sqrt[p]{\beta_k}$ at $x = x_i$ take the form
\begin{equation} \label{eq:smooth_indicator_ZR_Taylor_critical}
\begin{aligned}
 \sqrt[p]{\beta_0} &= \left( \frac{13}{12} f''^2_i \right)^{1/p} \Delta x^{4/p} - \left( \frac{13}{12} \right)^{1/p} \frac{2}{p} f'''_i f''^{2/p-1}_i \Delta x^{1+4/p} + O \left( \Delta x^{2+4/p} \right), \\
 \sqrt[p]{\beta_1} &= \left( \frac{13}{12} f''^2_i \right)^{1/p} \Delta x^{4/p} + O \left( \Delta x^{2+4/p} \right), \\
 \sqrt[p]{\beta_2} &= \left( \frac{13}{12} f''^2_i \right)^{1/p} \Delta x^{4/p} + \left( \frac{13}{12} \right)^{1/p} \frac{2}{p} f'''_i f''^{2/p-1}_i \Delta x^{1+4/p} + O \left( \Delta x^{2+4/p} \right).
\end{aligned}
\end{equation}
Then \eqref{eq:global_smooth_indicator_ZR} gives
$$
   \tau_5 = \left( \frac{13}{12} \right)^{1/p} \frac{4}{p} \left| f'''_i f''^{2/p-1}_i \right| \Delta x^{1+4/p} + O \left( \Delta x^{3+4/p} \right).
$$
By the technique of Taylor expansion again, we obtain
$$
   \omega^{\ZR}_k = d_k + O(\Delta x^p).
$$
So the nonlinear weights $\omega^{\ZR}_k$ with $p \geqslant 3$ satisfies the condition \eqref{eq:WENO_condition} to maintain fifth order accuracy in space if there is a first-order critical point at $x = x_i$.
Note that we could apply the similar analysis to the smooth region containing a higher-order critical point, which demonstrates that increasing the value of $p$ suffices for the condition \eqref{eq:WENO_condition}, attaining fifth order accuracy. 

Finally, we investigate the limit case of nonlinear weights $\omega^{\ZR}_k$ in smooth regions as $p \to 0^+$.
Combining \eqref{eq:smooth_indicator_JS_Taylor} and \eqref{eq:T5_limit_0} shows that
$$
   \lim_{\Delta x \to 0} \lim_{p \to 0^+} \frac{T_5(p)}{\beta_k} = \lim_{\Delta x \to 0} \frac{\max \{ \beta_0, \beta_2 \}}{\beta_k} = 1
$$
so that
$$
   \lim_{\Delta x \to 0} \lim_{p \to 0^+} \omega^{\ZR}_k = \lim_{\Delta x \to 0} \lim_{p \to 0^+} \frac{d_k \left( 1 + \frac{T_5(p)}{\beta_k} \right)}{\sum^2_{s=0} d_s \left( 1 + \frac{T_5(p)}{\beta_s} \right)} = \frac{d_k \left( 1 + 1 \right)}{\sum^2_{s=0} d_s \left( 1 + 1 \right)} = d_k.
$$
Consequently, if $\Delta x$ is small enough, $\omega^{\ZR}_k \approx d_k$ as $p \to 0^+$.

\subsection{Dissipation around discontinuities for $\omega^{\ZR}_k$} \label{sec:new_Z_weight_discontinuity}
The tool developed in Section \ref{sec:1d_fd_WENO_discontinuity} is effective to better understand the behavior of nonlinear weights, which are related to the dissipation around discontinuities.
We can apply this same approach to examine the performance of the proposed nonlinear weights $\omega^{\ZR}_k$ in \eqref{eq:weights_ZR} on the stencil $S^5$ with the discontinuity as $p$ increases.
The assumption remains the same as in Section \ref{sec:1d_fd_WENO_discontinuity}.
From \eqref{eq:weights_ZR}, we have
\begin{align}
 \omega^{\ZR}_0(p) &= \frac{1}{d_0 + d_1 \frac{1 + \frac{T_5(p)}{\beta_1}}{1 + \frac{T_5(p)}{\beta_0}} + d_2 \frac{1 + \frac{T_5(p)}{\beta_2}}{1 + \frac{T_5(p)}{\beta_0}}} d_0, \label{eq:weights_ZR_0} \\
 \omega^{\ZR}_1(p) &= \frac{1}{d_0 \frac{1 + \frac{T_5(p)}{\beta_0}}{1 + \frac{T_5(p)}{\beta_1}} + d_1 + d_2 \frac{1 + \frac{T_5(p)}{\beta_2}}{1 + \frac{T_5(p)}{\beta_1}}} d_1, \label{eq:weights_ZR_1} \\
 \omega^{\ZR}_2(p) &= \frac{1}{d_0 \frac{1 + \frac{T_5(p)}{\beta_0}}{1 + \frac{T_5(p)}{\beta_2}} + d_1 \frac{1 + \frac{T_5(p)}{\beta_1}}{1 + \frac{T_5(p)}{\beta_2}} + d_2} d_2. \label{eq:weights_ZR_2}
\end{align}
Suppose $p>q>0$.
According to \eqref{eq:T5_range} and the fact that $T_5$ is strictly decreasing on $(0, \infty)$, the inequality 
\begin{equation} \label{eq:pq}
 0 < T_5(p) < T_5(q) < \max \{ \beta_0, \beta_2 \} 
\end{equation}
holds.

In the case (i), we have $\beta_0, \beta_1 = O(\Delta x^2)$ and $\beta_2 = O(1)$. 
Then for $k = 0,1$, 
\begin{gather}
 \beta_k \ll \beta_2, \label{eq:betak2_ll} \\
 1 + \frac{T_5(p)}{\beta_k} > 1 + \frac{T_5(p)}{\beta_2}. \label{eq:betak2_grt}
\end{gather}
Combining $\displaystyle{\lim_{\Delta x \to 0} \beta_1 / \beta_0 = 1}$ and \eqref{eq:betak2_ll} gives
$$
   \frac{1}{\beta_1} - \frac{1}{\beta_0} \ll \frac{1}{\beta_0} - \frac{1}{\beta_2},
$$
and it is natural to assume that 
\begin{equation} \label{eq:beta1002}
 \frac{1}{\beta_1} - \frac{1}{\beta_0} < \frac{d_2}{d_1} \left( \frac{1}{\beta_0} - \frac{1}{\beta_2} \right).
\end{equation}
It follows that 
$$
   d_1 \left( \frac{1 + \frac{T_5(p)}{\beta_1}}{1 + \frac{T_5(p)}{\beta_0}} - 1 \right) 
 = d_1 \frac{T_5(p) \left( \frac{1}{\beta_1} - \frac{1}{\beta_0} \right)}{1 + \frac{T_5(p)}{\beta_0}} 
 < d_2 \frac{T_5(p) \left( \frac{1}{\beta_0} - \frac{1}{\beta_2} \right)}{1 + \frac{T_5(p)}{\beta_0}}
 = d_2 \left( 1 - \frac{1 + \frac{T_5(p)}{\beta_2}}{1 + \frac{T_5(p)}{\beta_0}} \right), 
$$
and hence
\begin{equation} \label{eq:t5p_1}
 d_0 + d_1 \frac{1 + \frac{T_5(p)}{\beta_1}}{1 + \frac{T_5(p)}{\beta_0}} + d_2 \frac{1 + \frac{T_5(p)}{\beta_2}}{1 + \frac{T_5(p)}{\beta_0}} < d_0 + d_1 + d_2 = 1.
\end{equation}
Multiplying both sides of \eqref{eq:beta1002} by $T_5(q) - T_5(p)$, along with \eqref{eq:pq}, leads to
$$
   \left( T_5(q) - T_5(p) \right) \left( \frac{1}{\beta_1} - \frac{1}{\beta_0} \right) < \frac{d_2}{d_1} \left( T_5(q) - T_5(p) \right) \left( \frac{1}{\beta_0} - \frac{1}{\beta_2} \right).
$$
After quite extensive algebra, we obtain 
\begin{equation} \label{eq:t5q_t5p}
 d_0 + d_1 \frac{1 + \frac{T_5(q)}{\beta_1}}{1 + \frac{T_5(q)}{\beta_0}} + d_2 \frac{1 + \frac{T_5(q)}{\beta_2}}{1 + \frac{T_5(q)}{\beta_0}}
<d_0 + d_1 \frac{1 + \frac{T_5(p)}{\beta_1}}{1 + \frac{T_5(p)}{\beta_0}} + d_2 \frac{1 + \frac{T_5(p)}{\beta_2}}{1 + \frac{T_5(p)}{\beta_0}}.
\end{equation}
From \eqref{eq:weights_ZR_0}, \eqref{eq:t5p_1} and \eqref{eq:t5q_t5p}, we have $\omega^{\ZR}_0(q) > \omega^{\ZR}_0(p) > d_0$.
Similarly, $\omega^{\ZR}_1(q) > \omega^{\ZR}_1(p) > d_1$.
According to \eqref{eq:pq} and \eqref{eq:betak2_ll}, we find, for $k = 0,1$,
$$
   \frac{\beta_k - \beta_2}{\beta_0 + T_5(q)} > \frac{\beta_k - \beta_2}{\beta_0 + T_5(p)},
$$
which implies
$$
   \frac{\beta_k + T_5(q)}{\beta_2 + T_5(q)} = 1 + \frac{\beta_k - \beta_2}{\beta_2 + T_5(q)} > 1 + \frac{\beta_k - \beta_2}{\beta_2 + T_5(p)} = \frac{\beta_k + T_5(p)}{\beta_2 + T_5(p)}.
$$
Multiplication by $\frac{\beta_2}{\beta_k}$ gives
$$
   \frac{1 + \frac{T_5(q)}{\beta_k}}{1 + \frac{T_5(q)}{\beta_2}} =  \frac{\beta_2}{\beta_k} \frac{\beta_k + T_5(q)}{\beta_2 + T_5(q)} 
 > \frac{\beta_2}{\beta_k} \frac{\beta_k + T_5(p)}{\beta_2 + T_5(p)} = \frac{1 + \frac{T_5(p)}{\beta_k}}{1 + \frac{T_5(p)}{\beta_2}}.
$$
We thus have 
$$
   d_0 \frac{1 + \frac{T_5(q)}{\beta_0}}{1 + \frac{T_5(q)}{\beta_2}} + d_1 \frac{1 + \frac{T_5(q)}{\beta_1}}{1 + \frac{T_5(q)}{\beta_2}} + d_2 
 > d_0 \frac{1 + \frac{T_5(p)}{\beta_0}}{1 + \frac{T_5(p)}{\beta_2}} + d_1 \frac{1 + \frac{T_5(p)}{\beta_1}}{1 + \frac{T_5(p)}{\beta_2}} + d_2
 > d_0 + d_1 + d_2 = 1,
$$
where the penultimate inequality holds from \eqref{eq:betak2_grt}.
By \eqref{eq:weights_ZR_2}, we obtain the relation $\omega^{\ZR}_2(q) < \omega^{\ZR}_2(p) < d_2$.
Therefore, 
$$
   \omega^{\ZR}_0(q) > \omega^{\ZR}_0(p) > d_0,~~\omega^{\ZR}_1(q) > \omega^{\ZR}_1(p) > d_1,~~\omega^{\ZR}_2(q) < \omega^{\ZR}_2(p) < d_2.
$$

For the case (ii), employing \eqref{eq:beta12_approx} and $\beta_0 \ll \beta_k, k = 1,2$, gives
$$
   \frac{1}{\beta_0} - \frac{1}{\beta_k} \gg \left( \frac{1}{\beta_1} - \frac{1}{\beta_2} \right).
$$
We can derive 
$$
   \omega^{\ZR}_0(q) > \omega^{\ZR}_0(p) > d_0,~~\omega^{\ZR}_1(q) < \omega^{\ZR}_1(p) < d_1,~~\omega^{\ZR}_2(q) < \omega^{\ZR}_2(p) < d_2
$$
in much the same way we did in the case (i) under the assumptions 
$$
   \frac{1}{\beta_0} - \frac{1}{\beta_1} < \frac{d_2}{d_0} \left( \frac{1}{\beta_1} - \frac{1}{\beta_2} \right) \text{ and } \,
   \frac{1}{\beta_0} - \frac{1}{\beta_2} < \frac{d_1}{d_0} \left( \frac{1}{\beta_1} - \frac{1}{\beta_2} \right).
$$
The same techniques would naturally lead us to the conclusions
$$
   \omega^{\ZR}_0(q) < \omega^{\ZR}_0(p) < d_0,~~\omega^{\ZR}_1(q) < \omega^{\ZR}_1(p) < d_1,~~\omega^{\ZR}_2(q) > \omega^{\ZR}_2(p) > d_2,
$$
in the case (iii), and 
$$
   \omega^{\ZR}_0(q) < \omega^{\ZR}_0(p) < d_0,~~\omega^{\ZR}_1(q) > \omega^{\ZR}_1(p) > d_1,~~\omega^{\ZR}_2(q) > \omega^{\ZR}_2(p) > d_2,
$$
in the case (iv).
%
Therefore, we show that each nonlinear weight $\omega^{\ZR}_k$ moves closer to the corresponding linear weight $d_k$ as $p$ increases, as summarized in Table \ref{tab:S5_weight_comparison_ZR}.
In the limit $p \to \infty$, the nonlinear weight $\omega^{\ZR}_k$ converges to $d_k$ as shown in \eqref{eq:weights_ZR_infty}.

\begin{table}[h!]
\renewcommand{\arraystretch}{1.2}
\centering
\begin{tabular}{c|c} 
\hline
$S^5$ & $\omega^{\ZR}_k(p),~\omega^\Z_k(q)$ and $d_k$ \\ 
\hline 
$\{ x_{i-3}, x_{i-2}, x_{i-1}, {\color{red} x_i}, {\color{red} x_{i+1}} \}$ & $\omega^{\ZR}_0(q) > \omega^{\ZR}_0(p) > d_0,~~\omega^{\ZR}_1(q) > \omega^{\ZR}_1(p) > d_1,~~\omega^{\ZR}_2(q) < \omega^{\ZR}_2(p) < d_2$ \\
$\{ x_{i-2}, x_{i-1}, {\color{red} x_i}, {\color{red} x_{i+1}}, x_{i+2} \}$ & $\omega^{\ZR}_0(q) > \omega^{\ZR}_0(p) > d_0,~~\omega^{\ZR}_1(q) < \omega^{\ZR}_1(p) < d_1,~~\omega^{\ZR}_2(q) < \omega^{\ZR}_2(p) < d_2$ \\
$\{ x_{i-1}, {\color{red} x_i}, {\color{red} x_{i+1}}, x_{i+2}, x_{i+3} \}$ & $\omega^{\ZR}_0(q) < \omega^{\ZR}_0(p) < d_0,~~\omega^{\ZR}_1(q) < \omega^{\ZR}_1(p) < d_1,~~\omega^{\ZR}_2(q) > \omega^{\ZR}_2(p) > d_2$ \\
$\{ {\color{red} x_i}, {\color{red} x_{i+1}}, x_{i+2}, x_{i+3}, x_{i+4} \}$ & $\omega^{\ZR}_0(q) < \omega^{\ZR}_0(p) < d_0,~~\omega^{\ZR}_1(q) > \omega^{\ZR}_1(p) > d_1,~~\omega^{\ZR}_2(q) > \omega^{\ZR}_2(p) > d_2$ \\  
\hline
\end{tabular}
\caption{The deviation of the nonlinear weights $\omega^{\ZR}_k$ with $p>q>0$ from the linear weights $d_k$ when there is a discontinuity between the cell centers ${\color{red} x_i}$ and ${\color{red} x_{i+1}}$.}
\label{tab:S5_weight_comparison_ZR}
\end{table}

We use the example \eqref{eq:weight_comparison} of the linear advection equation, where the initial condition contains a discontinuity, again to compare the nonlinear weights $\omega^{\ZR}_k$ ($\epsilon = 10^{-40}$) for $p=1,3$ and $6$ at the first step of the numerical solutions.
Note that the weights $\omega^{\ZR}_k$ with $p=1$ is equivalent to $\omega^\Z_k$ in \eqref{eq:weights_Z}.
Figure \ref{fig:weight_comparison_ZR} and Table \ref{tab:weight_comparison_ZR} confirm that the increase in $p$ narrows the difference between nonlinear and linear weights around the discontinuity.

\begin{figure}[h!]
\centering
 \begin{subfigure}[b]{0.32\textwidth}
 \centering
 \includegraphics[width=\textwidth]{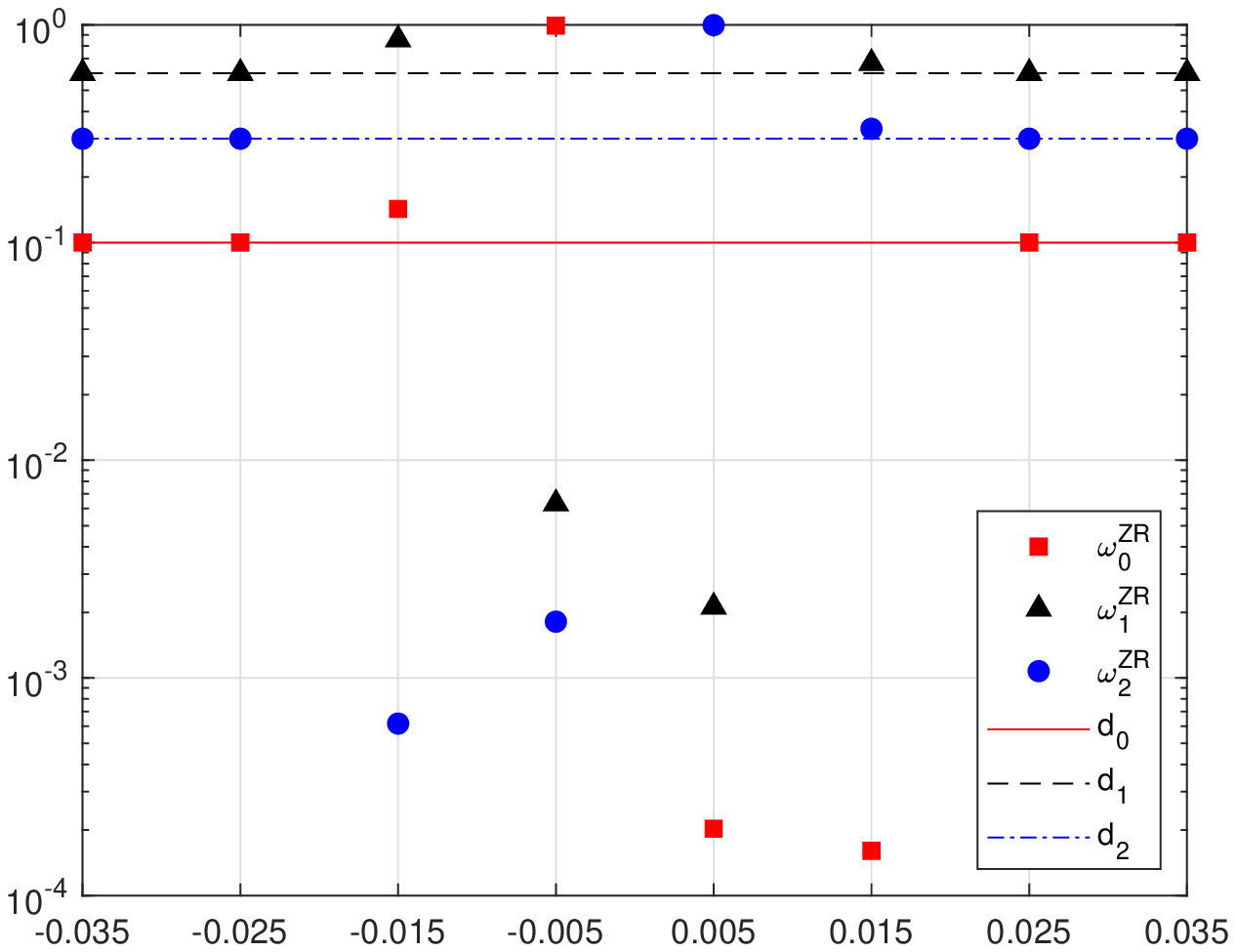}
 \caption{$p=1$}
 \end{subfigure}
 \hfill
 \begin{subfigure}[b]{0.32\textwidth}
 \centering
 \includegraphics[width=\textwidth]{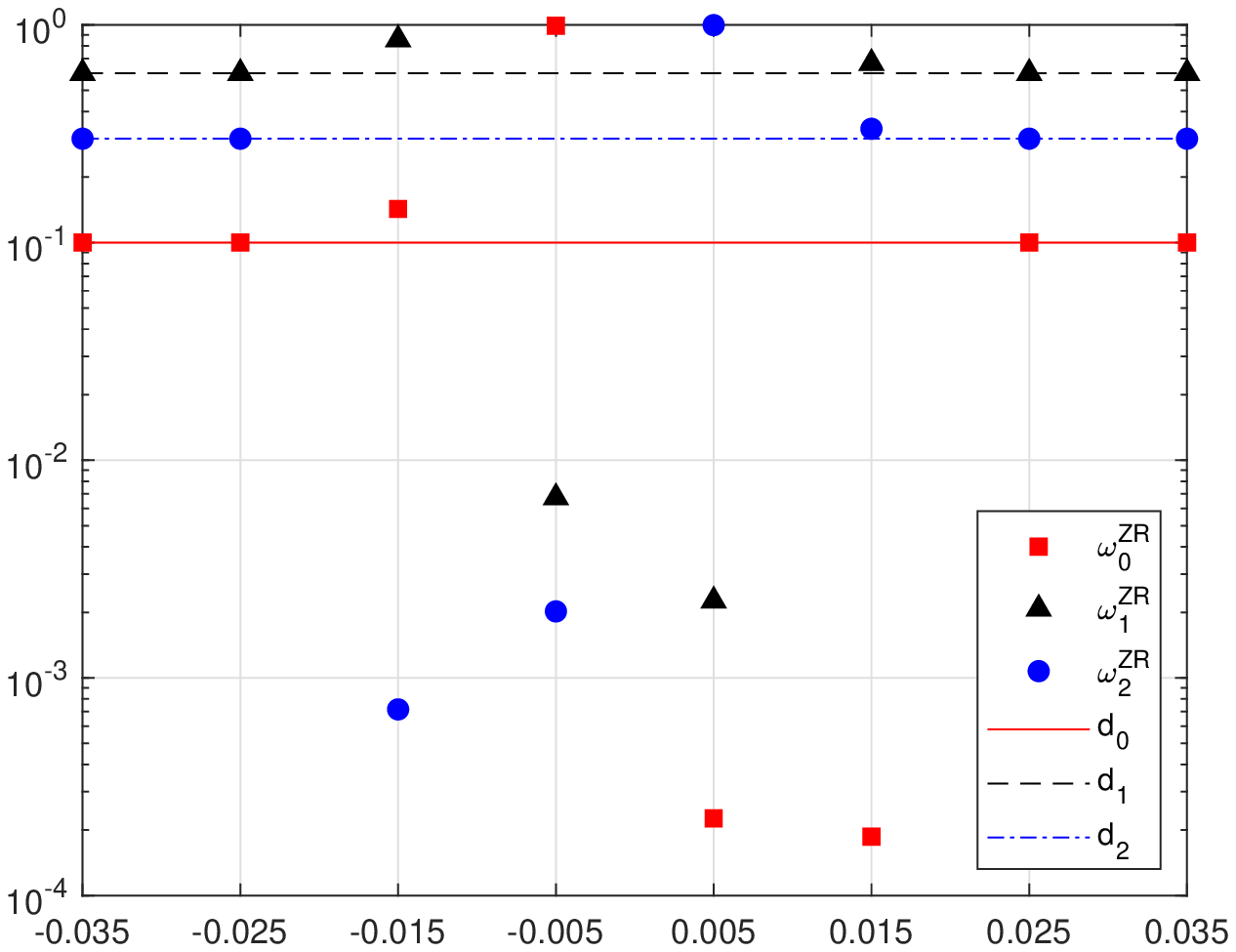}
 \caption{$p=3$}
 \end{subfigure}
 \hfill
 \begin{subfigure}[b]{0.32\textwidth}
 \centering
 \includegraphics[width=\textwidth]{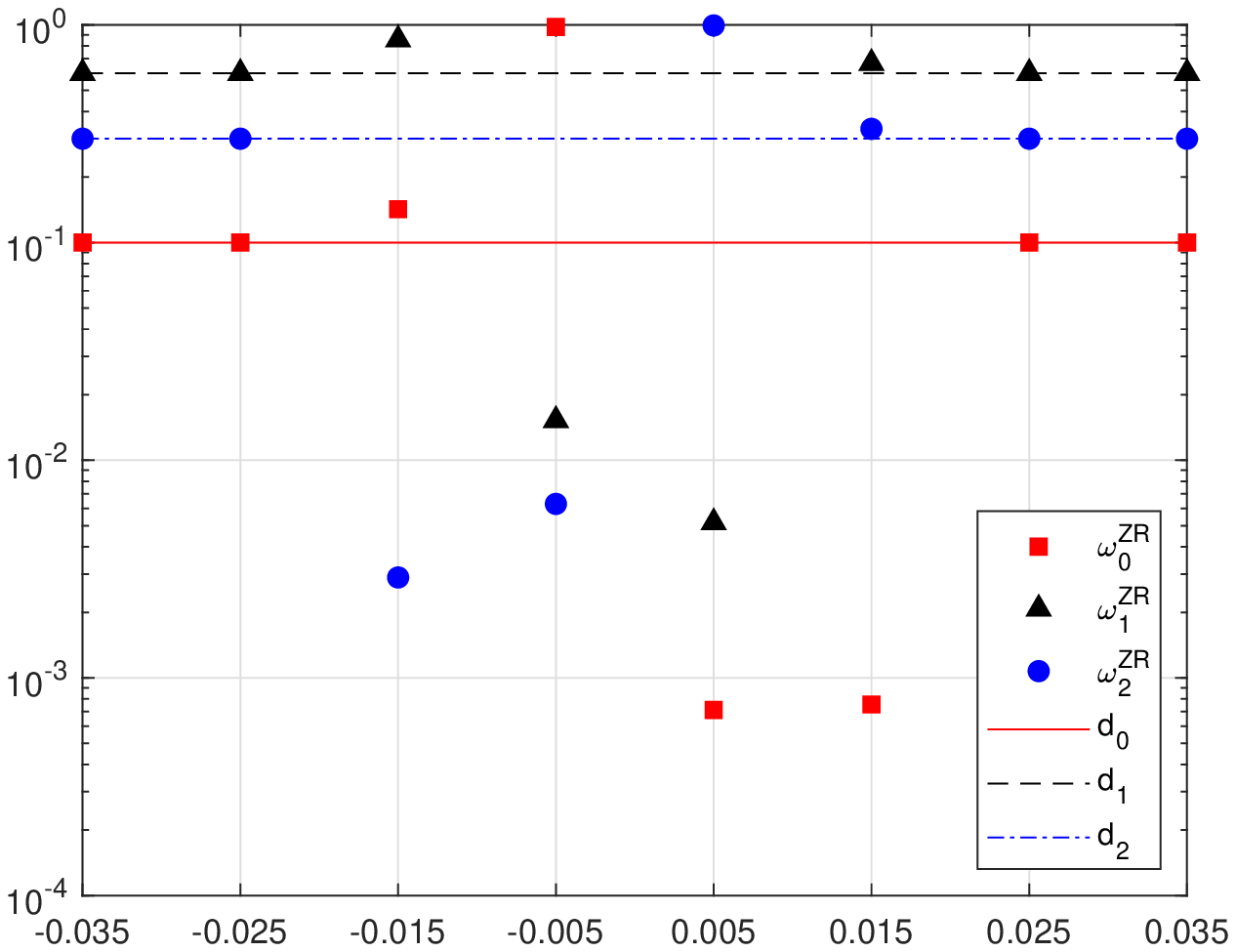}
 \caption{$p=6$}
 \end{subfigure}
\caption{The distribution of the nonlinear weights $\omega^{\ZR}_k$ near the discontinuity at $x=0$, compared with the linear weights $d_k$, for $p=1,3$ and $6$ at the first step of the numerical solutions of the test example \eqref{eq:weight_comparison} on a semilog plot with a $\log_{10}$ scale on the vertical axis. 
The linear weights $d_k$ are shown in lines while the nonlinear weights $\omega_k$ symbols.}
\label{fig:weight_comparison_ZR}
\end{figure}

\begin{table}[h!]
\renewcommand{\arraystretch}{1.2}
\centering
\begin{tabular}{ccccccccc|c} 
\hline
$x$ & -0.035 & -0.025 & -0.015 & -0.005 & 0.005 & 0.015 & 0.025 & 0.035 &  \\ 
\hline 
$\omega^{\ZR}_0$ ($p=1$) & 0.100000 & 0.100000 & 0.142660 & 0.991870 & 2.027e-4 & 1.604e-4 & 0.100000 & 0.100000 & \multirow{3}{*}{$d_0 = 0.1$} \\  
$\omega^{\ZR}_0$ ($p=3$) & 0.100000 & 0.100000 & 0.142646 & 0.991246 & 2.262e-4 & 1.864e-4 & 0.100000 & 0.100000 & \\  
$\omega^{\ZR}_0$ ($p=6$) & 0.100000 & 0.100000 & 0.142335 & 0.978451 & 7.119e-4 & 7.543e-4 & 0.100000 & 0.100000 & \\
\hline 
$\omega^{\ZR}_1$ ($p=1$) & 0.600000 & 0.600000 & 0.856724 & 6.318e-3 & 2.120e-3 & 0.666758 & 0.600000 & 0.600000 & \multirow{3}{*}{$d_1 = 0.6$} \\  
$\omega^{\ZR}_1$ ($p=3$) & 0.600000 & 0.600000 & 0.856638 & 6.734e-3 & 2.261e-3 & 0.666741 & 0.600000 & 0.600000 & \\  
$\omega^{\ZR}_1$ ($p=6$) & 0.600000 & 0.600000 & 0.854772 & 1.525e-2 & 5.171e-3 & 0.666361 & 0.600000 & 0.600000 & \\
\hline 
$\omega^{\ZR}_2$ ($p=1$) & 0.300000 & 0.300000 & 6.166e-4 & 1.812e-3 & 0.997677 & 0.333082 & 0.300000 & 0.300000 & \multirow{3}{*}{$d_2 = 0.3$} \\  
$\omega^{\ZR}_2$ ($p=3$) & 0.300000 & 0.300000 & 7.164e-4 & 2.021e-3 & 0.997513 & 0.333073 & 0.300000 & 0.300000 & \\  
$\omega^{\ZR}_2$ ($p=6$) & 0.300000 & 0.300000 & 2.892e-3 & 6.299e-3 & 0.994117 & 0.332885 & 0.300000 & 0.300000 & \\  
\hline
\end{tabular}
\caption{The values of the nonlinear weights $\omega^{\ZR}_k$ near the discontinuity at $x=0$, compared with the linear weights $d_k$, for $p=1,3$ and $6$ at the first step of the numerical solutions of the test example \eqref{eq:weight_comparison}.}
\label{tab:weight_comparison_ZR}
\end{table}

\section{Numerical experiments} \label{sec:numex}
In this section, we present some numerical experiments to compare the proposed WENO scheme, referred as WENO-ZR, with the WENO-JS, WENO-M and WENO-Z schemes.
We take $p=3$ here for the nonlinear weights $\omega^{\ZR}_k$ in \eqref{eq:weights_ZR} as the sufficient condition \eqref{eq:WENO_condition} is satisfied for at least non-critical and first-order critical points. 
We use the 1D linear advection equation to verify the order of accuracy of the WENO schemes in terms of $L^1, L^2$ and $L^{\infty}$ error norms:
\begin{align*}
 & L^1 = \frac{1}{N+1} \sum_{i=0}^N \left| u(x_i, T) - u_i(T) \right|, \\
 & L^2 = \sqrt{\frac{1}{N+1} \sum_{i=0}^N \left( u(x_i, T) - u_i(T) \right)^2}, \\
 & L^{\infty} = \max_{0 \leqslant i \leqslant N} \left| u(x_i, T)- u_i(T) \right|,
\end{align*}
where $u(x_i, T)$ is the exact solution and $u_i(T)$ denotes the numerical approximation at the final time $t = T$.
The rest examples show the numerical results from WENO-ZR, in comparison with WENO-JS, WENO-M and WENO-Z.
We choose $\epsilon = 10^{-40}$ for the WENO-M, WENO-Z and WENO-ZR schemes whereas $\epsilon = 10^{-6}$ remains for WENO-JS as in \cite{Jiang}.
For 1D scalar problems, we use the Lax-Friedrich flux splitting. 
For 1D system problems, we apply the characteristic-wise Lax-Friedrich flux splitting. 
For 2D system problems, we implement the schemes with the characteristic-wise Lax-Friedrich flux splitting in a dimension-by-dimension fashion. 
We employ the explicit third-order total variation diminishing Runge-Kutta method in \cite{ShuOsherI} for time discretization
\begin{align*}
 u^{(1)} &= u^n + \Delta t L(u^n), \\
 u^{(2)} &= \frac{3}{4} u^n + \frac{1}{4} u^{(1)} + \frac{1}{4} \Delta t L \left( u^{(1)} \right), \\
 u^{n+1} &= \frac{1}{3} u^n + \frac{2}{3} u^{(2)} + \frac{2}{3} \Delta t L \left( u^{(2)} \right),
\end{align*}
where $L$ is the spatial operator.

\subsection{1D linear advection equation} 
\begin{example} \label{ex:advection_1d}
We start with checking the order of accuracy for the 1D linear advection equation
$$
   u_t + u_x = 0,~-1 \leqslant x \leqslant 1,
$$
with the initial condition 
$$
   u(x,0) = \sin(\pi x)
$$
and the periodic boundary condition. 
The exact solution is given by
$$
   u(x,t)= \sin \left( \pi (x-t) \right).
$$
The numerical solution is computed up to the time $T=2$ with the time step $\Delta t = 0.4 \Delta x^{5/3}$. 

The $L_1, L_2$ and $L_{\infty}$ errors versus $N$, as well as the order of accuracy, for the WENO-JS, WENO-M, WENO-Z and WENO-ZR schemes are listed in Tables \ref{tab:advection_1d_L1}, \ref{tab:advection_1d_L2} and \ref{tab:advection_1d_Linf}. 
It is clear that the expected order of accuracy is achieved for all schemes since the exact solution is smooth.
The WENO-ZR performs slightly better than the other WENO schemes in terms of accuracy.  
\end{example}

\begin{table}[h!]
\renewcommand{\arraystretch}{1.1}
\scriptsize
\centering
\caption{$L_1$ error and order of accuracy for Example \ref{ex:advection_1d}.}      
\begin{tabular}{clcrlcrlcrlc} 
\hline  
N & \multicolumn{2}{l}{WENO-JS} & & \multicolumn{2}{l}{WENO-M} & & \multicolumn{2}{l}{WENO-Z} & & \multicolumn{2}{l}{WENO-ZR} \\ 
    \cline{2-3}                     \cline{5-6}                    \cline{8-9}                    \cline{11-12}  
  & Error & Order               & & Error & Order              & & Error & Order              & & Error & Order \\
\hline 
10  & 2.81e-2 & --     & & 8.57e-3  & --     & & 7.40e-3  & --     & & 5.94e-3  & -- \\  
20  & 1.44e-3 & 4.2923 & & 2.06e-4  & 5.3785 & & 2.09e-4  & 5.1461 & & 1.97e-4  & 4.9135 \\  
40  & 4.39e-5 & 5.0301 & & 6.32e-6  & 5.0252 & & 6.33e-6  & 5.0461 & & 6.31e-6  & 4.9646 \\
80  & 1.38e-6 & 4.9897 & & 2.00e-7  & 4.9832 & & 2.00e-7  & 4.9841 & & 2.00e-7  & 4.9799 \\ 
160 & 4.34e-8 & 4.9938 & & 6.29e-9  & 4.9911 & & 6.29e-9  & 4.9912 & & 6.29e-9  & 4.9909 \\ 
320 & 1.36e-9 & 4.9975 & & 1.98e-10 & 4.9892 & & 1.98e-10 & 4.9892 & & 1.98e-10 & 4.9892 \\  
\hline
\end{tabular}
\label{tab:advection_1d_L1}
\end{table}

\begin{table}[h!]
\renewcommand{\arraystretch}{1.1}
\scriptsize
\centering
\caption{$L_2$ error and order of accuracy for Example \ref{ex:advection_1d}.}      
\begin{tabular}{clcrlcrlcrlc} 
\hline  
N & \multicolumn{2}{l}{WENO-JS} & & \multicolumn{2}{l}{WENO-M} & & \multicolumn{2}{l}{WENO-Z} & & \multicolumn{2}{l}{WENO-ZR} \\ 
    \cline{2-3}                     \cline{5-6}                    \cline{8-9}                    \cline{11-12}  
  & Error & Order               & & Error & Order              & & Error & Order              & & Error & Order \\
\hline 
10  & 3.05e-2 & --     & & 9.19e-3  & --     & & 8.12e-3  & --     & & 6.60e-3  & -- \\  
20  & 1.64e-3 & 4.2147 & & 2.29e-4  & 5.3257 & & 2.41e-4  & 5.0710 & & 2.21e-4  & 4.8986 \\  
40  & 5.19e-5 & 4.9851 & & 7.09e-6  & 5.0144 & & 7.21e-6  & 5.0663 & & 7.07e-6  & 4.9677 \\
80  & 1.59e-6 & 5.0263 & & 2.23e-7  & 4.9886 & & 2.24e-7  & 5.0087 & & 2.23e-7  & 4.9856 \\ 
160 & 4.91e-8 & 5.0195 & & 7.00e-9  & 4.9946 & & 7.01e-9  & 4.9979 & & 7.00e-9  & 4.9944 \\
320 & 1.53e-9 & 5.0065 & & 2.20e-10 & 4.9912 & & 2.20e-10 & 4.9916 & & 2.20e-10 & 4.9912 \\   
\hline
\end{tabular}
\label{tab:advection_1d_L2}
\end{table}

\begin{table}[h!]
\renewcommand{\arraystretch}{1.1}
\scriptsize
\centering
\caption{$L_{\infty}$ error and order of accuracy for Example \ref{ex:advection_1d}.}      
\begin{tabular}{clcrlcrlcrlc} 
\hline  
N & \multicolumn{2}{l}{WENO-JS} & & \multicolumn{2}{l}{WENO-M} & & \multicolumn{2}{l}{WENO-Z} & & \multicolumn{2}{l}{WENO-ZR} \\ 
    \cline{2-3}                     \cline{5-6}                    \cline{8-9}                    \cline{11-12}  
  & Error & Order               & & Error & Order              & & Error & Order              & & Error & Order \\
\hline 
10  & 4.73e-2 & --     & & 1.25e-2  & --     & & 1.12e-2  & --     & & 9.71e-3  & -- \\  
20  & 2.58e-3 & 4.1952 & & 3.20e-4  & 5.2867 & & 3.47e-4  & 5.0151 & & 3.18e-4  & 4.9335 \\ 
40  & 9.00e-5 & 4.8409 & & 1.01e-5  & 4.9815 & & 1.03e-5  & 5.0688 & & 1.01e-5  & 4.9743 \\
80  & 2.79e-6 & 5.0116 & & 3.18e-7  & 4.9933 & & 3.19e-7  & 5.0187 & & 3.18e-7  & 4.9926 \\  
160 & 8.64e-8 & 5.0136 & & 9.93e-9  & 4.9984 & & 9.95e-9  & 5.0036 & & 9.93e-9  & 4.9983 \\ 
320 & 2.56e-9 & 5.0753 & & 3.12e-10 & 4.9936 & & 3.12e-10 & 4.9950 & & 3.12e-10 & 4.9935 \\      
\hline
\end{tabular}
\label{tab:advection_1d_Linf}
\end{table}

\subsection{1D Burgers' equation}
\begin{example} \label{ex:burgers_1d}
Consider the Riemann problem for the nonlinear Burgers' equation 
\begin{align*}
 u_t + \left(\frac{1}{2} u^2 \right)_x &= 0, \\
                                u(x,0) &= \left\{ 
                                           \begin{array}{ll} 
                                            1, & x \leqslant 0, \\ 
                                            0, & x > 0.
                                           \end{array} 
                                          \right.
\end{align*}
Then the exact solution is a shock wave of the form
$$
   u(x,0) = \left\{ 
             \begin{array}{ll} 
              1, & x - \frac{1}{2} t \leqslant 0, \\ 
              0, & x - \frac{1}{2} t > 0.
             \end{array} 
            \right.
$$
The shock moves to the right at the position $x = \frac{1}{2} t$ for any time $t>0$.
We divide the computational domain $[-1, 1]$ into $N = 40$ uniform cells.
The final time is $T = 1$ and the time step is $\Delta t = 0.4 \Delta x$.

Figure \ref{fig:burgers} depicts the numerical solutions for Example \ref{ex:burgers_1d} at the final time, compared to the exact solution.
We can see that WENO-JS, WENO-M, WENO-Z and WENO-ZR, in order of less dissipation, yield a sharper and sharper approximation around the shock, while keeping the smooth regions without noticeable oscillations.
\end{example}

\begin{figure}[h!]
\centering
\includegraphics[width=0.45\textwidth]{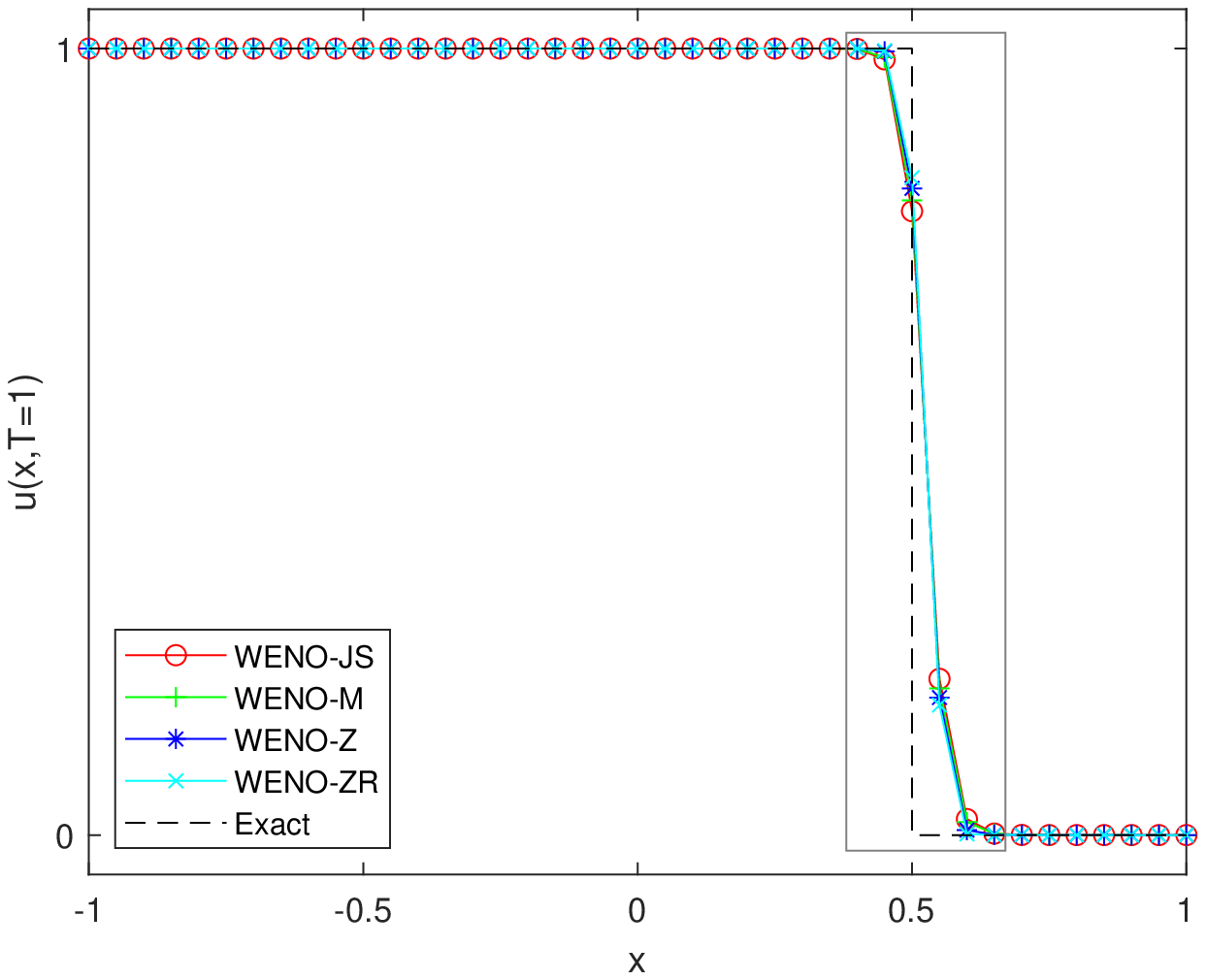}
\includegraphics[width=0.45\textwidth]{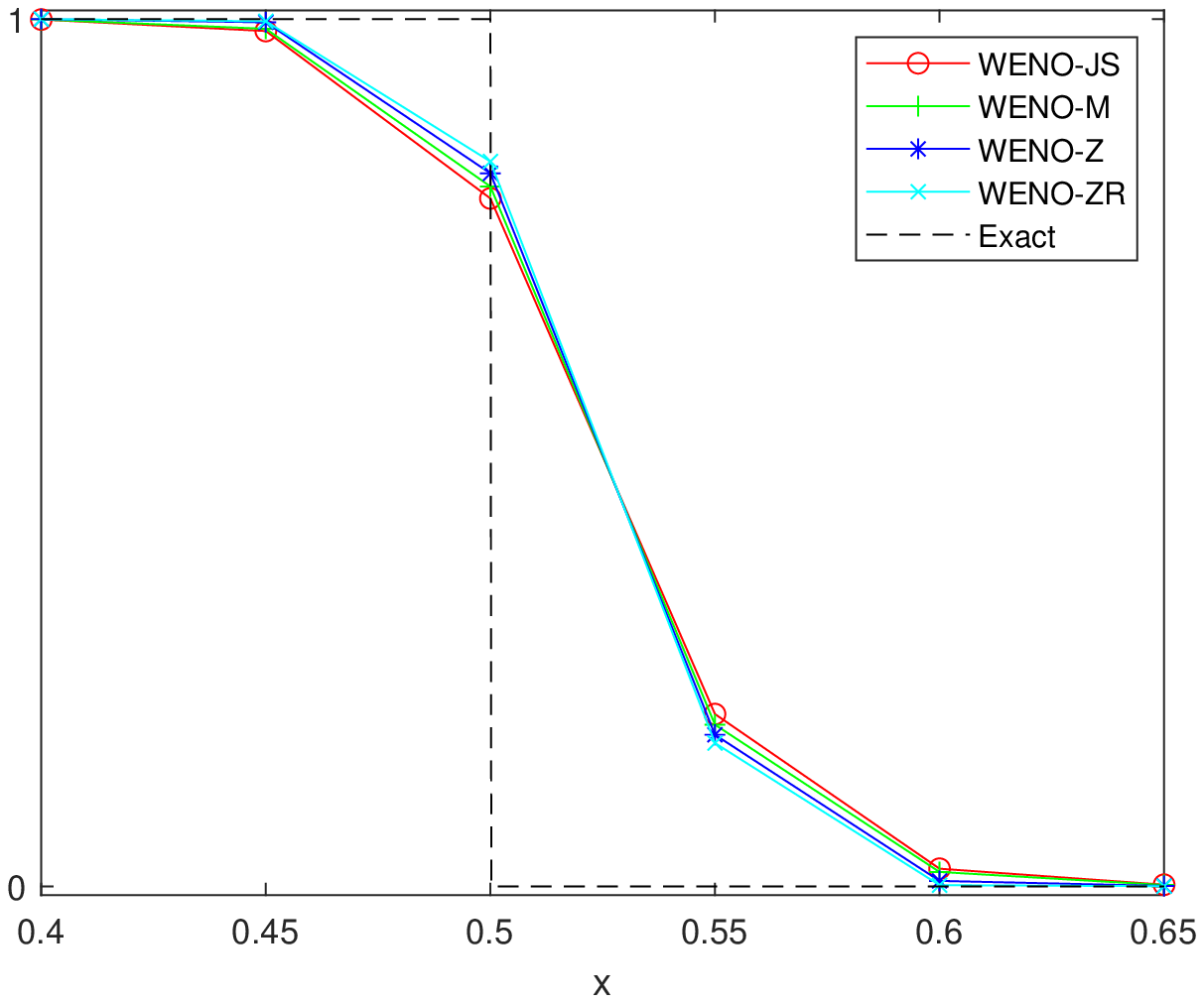}
\caption{Solution profiles for Example \ref{ex:burgers_1d} at $T = 1$ (left), close-up view of the solutions in the box (right) computed by WENO-JS (red), WENO-M (green), WENO-Z (blue) and WENO-ZR (cyan) with $N = 40$.
The dashed black lines are the exact solution.}
\label{fig:burgers}
\end{figure}

\subsection{1D Euler equations}
Next, we solve the 1D Euler equations of gas dynamics
\begin{equation} \label{eq:euler_1d}
 U_t + F(U)_x = 0, 
\end{equation}
where the column vector $U$ of conserved variables and the flux vector $F$ in the $x$ direction are given by 
$$
   U = \left[ \rho, ~\rho u, ~E \right]^T, \quad F(U) = \left[ \rho u, ~\rho u^2 + P, ~u(E+P) \right]^T.
$$
Here $\rho, u$ and $P$ are primitive variables representing density, velocity and pressure, respectively.
The expression for the specific kinetic energy $E$ is 
$$
   E = \frac{P}{\gamma - 1} + \frac{1}{2} \rho u^2 
$$
with $\gamma = 1.4$ for the ideal gas. 

\begin{example} \label{ex:sod_lax_123}
We test the performance of the proposed WENO-ZR scheme by means of the standard shock tube problems with the Riemann-type initial data.
Here the exact solution is obtained with the method in \cite{Toro} for the Riemann problem of the Euler equations \eqref{eq:euler_1d}. 
The computational domain $[-5, 5]$ is divided into $N = 200$ uniform cells and the time step is $\Delta t = 0.2 \Delta x$.

We first consider the Sod's problem with the initial condition in the form of primitive variables
\begin{equation} \label{eq:sod}
 (\rho, u, P ) = \left\{ 
                  \begin{array}{ll} 
                   (1,~0,~1),       & x \leqslant 0, \\ 
                   (0.125,~0,~0.1), & x > 0.
                  \end{array} 
                 \right.
\end{equation}
The final time is $T = 2$.
We present the numerical solutions of the density $\rho$ at the final time, as shown in Figure \ref{fig:sod}.

Now we consider the the Lax's problem. 
The initial condition in this case is
\begin{equation} \label{eq:lax}
 (\rho, u, P ) = \left\{ 
                  \begin{array}{ll} 
                   (0.445,~0.698,~3.528), & x \leqslant 0, \\ 
                   (0.5,~0,~0.571),       & x > 0. 
                  \end{array} 
                 \right. 
\end{equation}
Figure \ref{fig:lax} shows the approximate density obtained from each WENO scheme when solving the Euler equations up to the final time $T = 1.3$.  

We conclude with the $123$ problem, where the initial condition is given by
\begin{equation} \label{eq:123}
 (\rho, u, P ) = \left\{ 
                  \begin{array}{ll} 
                   (1,-2,~0.4), & x \leqslant 0, \\ 
                   (1,~2,~0.4), & x > 0.
                  \end{array} 
                 \right. 
\end{equation}
The simulations of the density $\rho$ at the final time $T=1$ are plotted in Figure \ref{fig:one23}.
 
The top left figure displays the numerical solutions over the entire domain while the rest figures show the solution profiles in the boxes specified in the top left figure.
The sharper numerical results from WENO-ZR are more accurate than WENO-JS, WENO-M and WENO-Z except the approximations in the smooth area near the origin in the bottom left corner of Figure \ref{fig:one23}.
\end{example}

\begin{figure}[h!]
\centering
\includegraphics[width=0.45\textwidth]{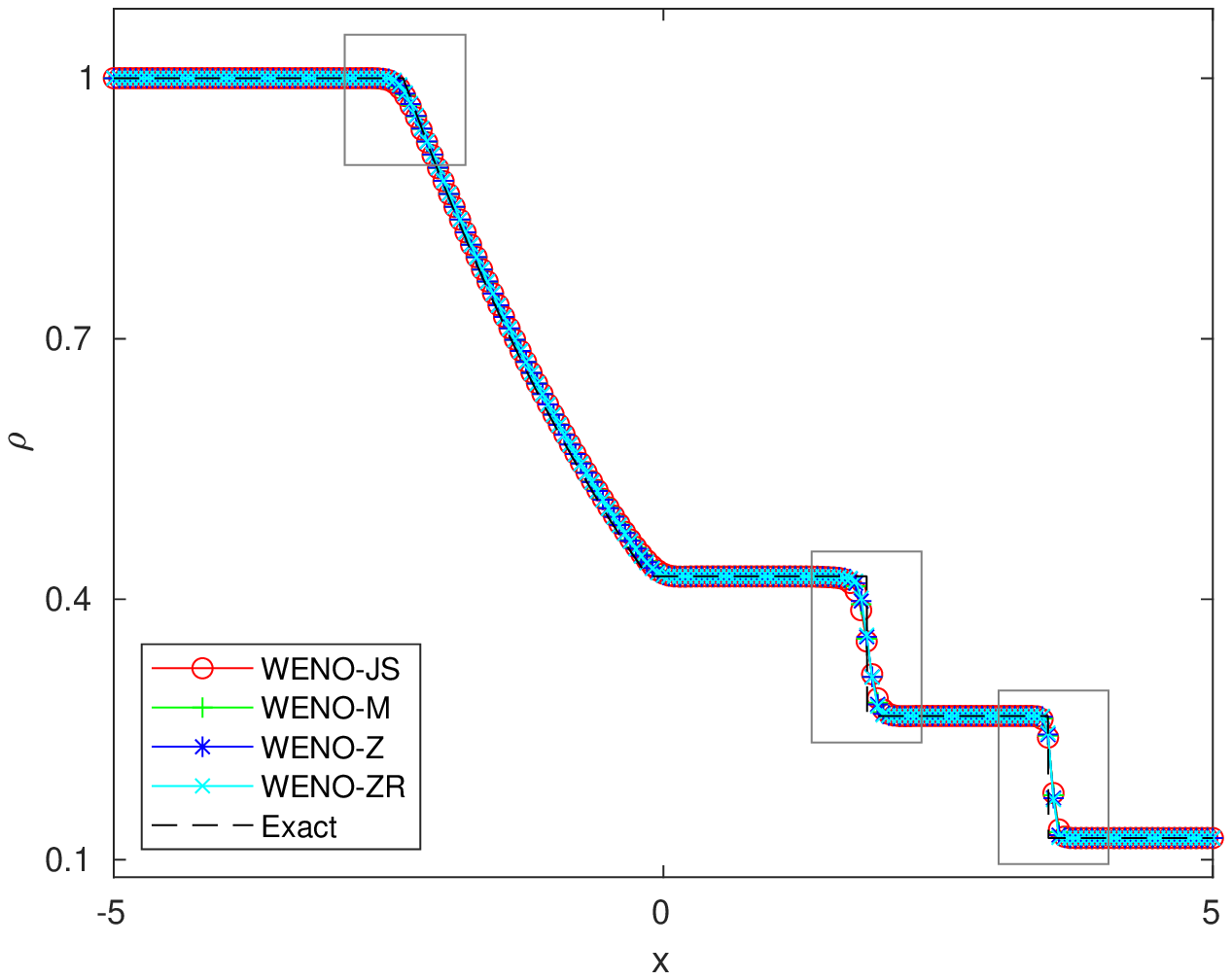}
\includegraphics[width=0.45\textwidth]{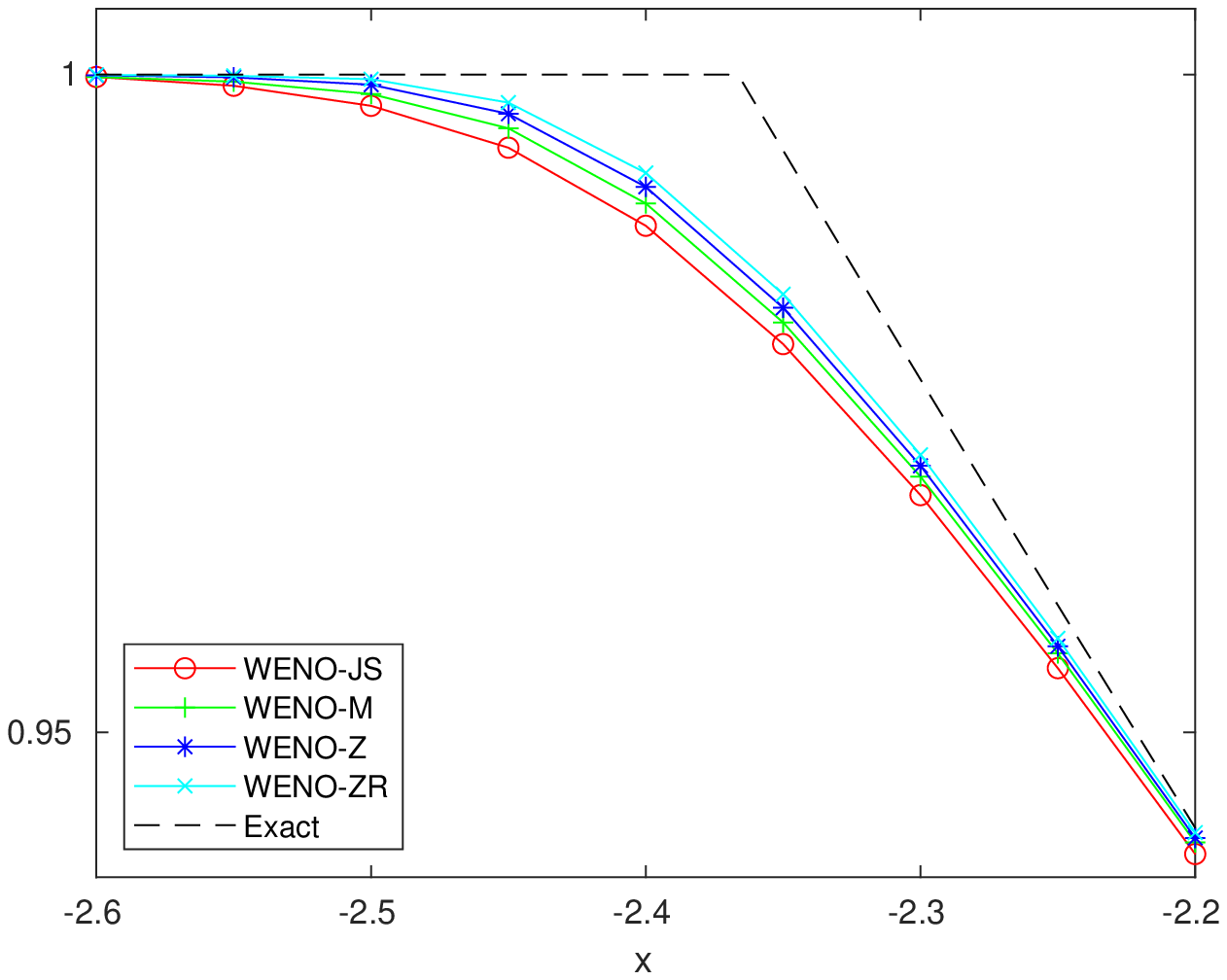}
\includegraphics[width=0.45\textwidth]{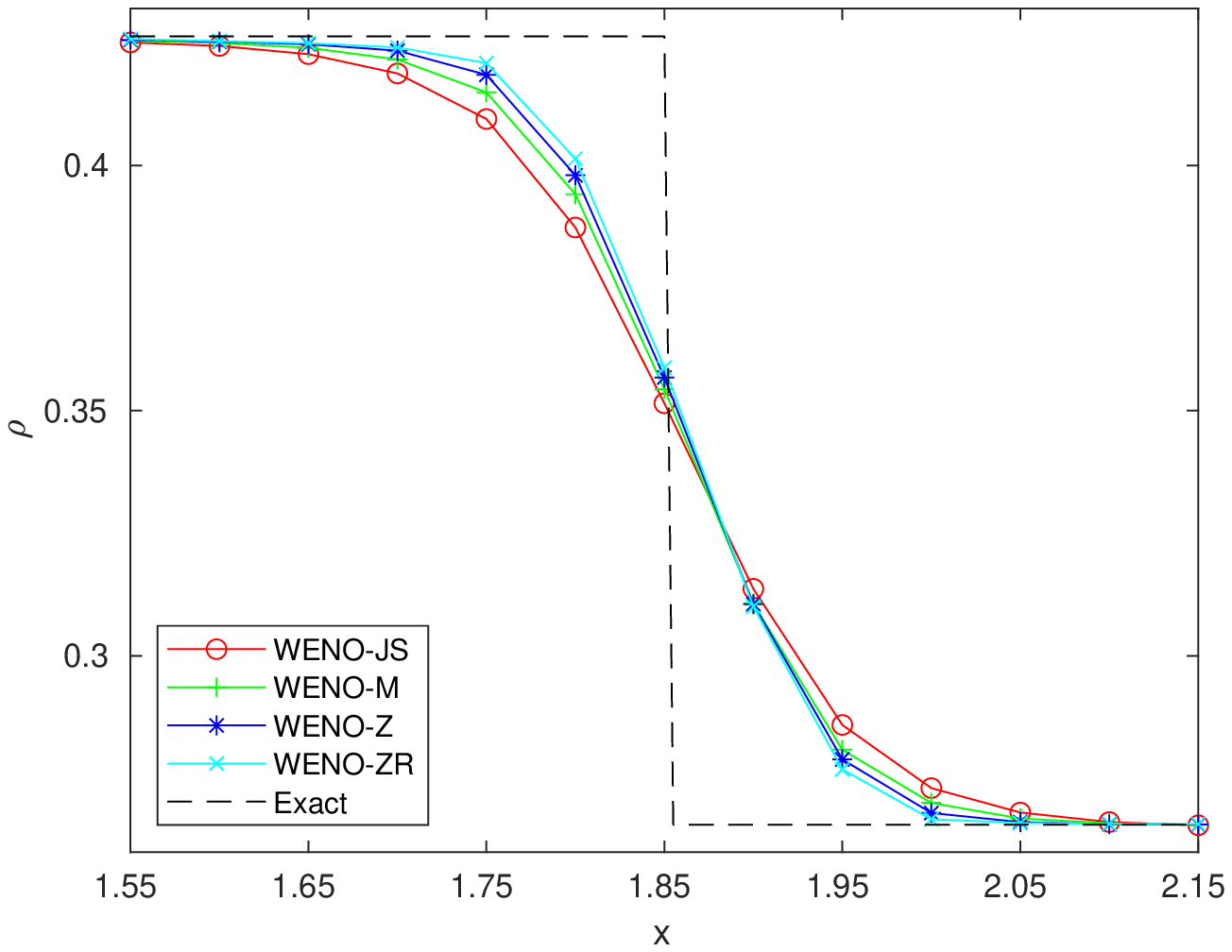}
\includegraphics[width=0.45\textwidth]{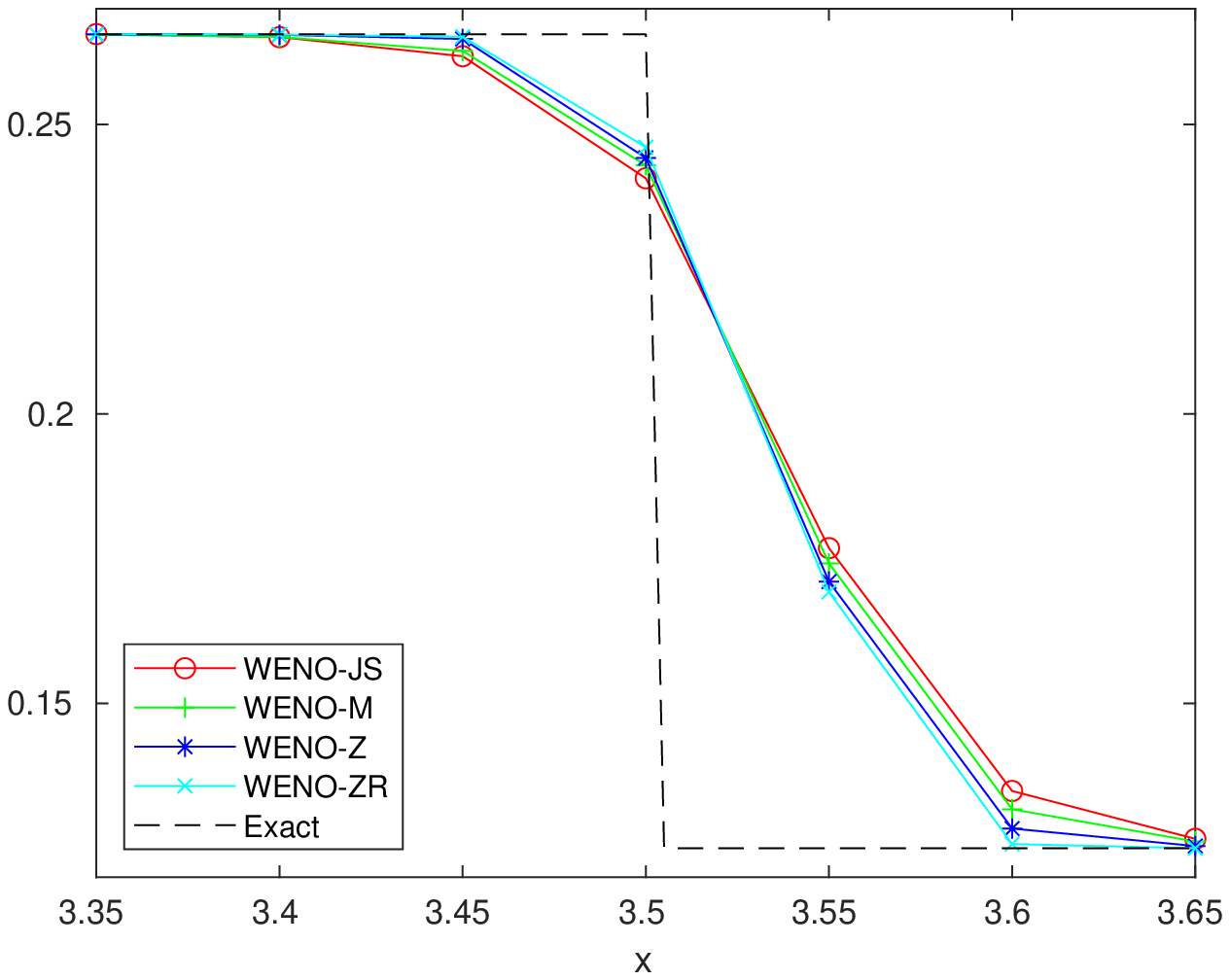}
\caption{Density profiles for the Sod problem \eqref{eq:euler_1d} and \eqref{eq:sod} at $T=2$ (top left), close-up view of the solutions in the boxes from left to right (top right, bottom left, bottom right) approximated by WENO-JS (red), WENO-M (green), WENO-Z (blue) and WENO-ZR (cyan) with $N = 200$. 
The dashed black lines are the exact solution.}
\label{fig:sod}
\end{figure}

\begin{figure}[h!]
\centering
\includegraphics[width=0.45\textwidth]{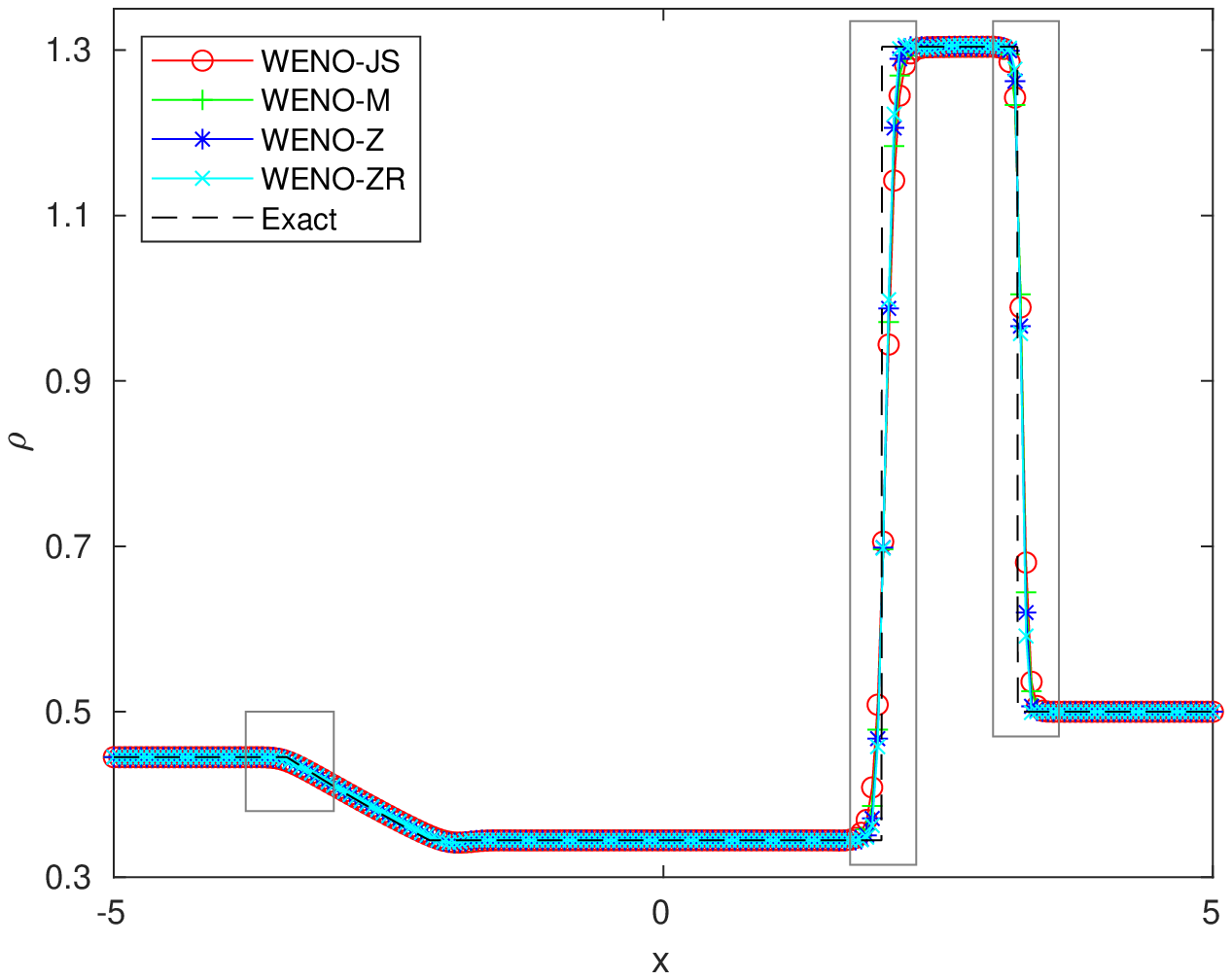}
\includegraphics[width=0.45\textwidth]{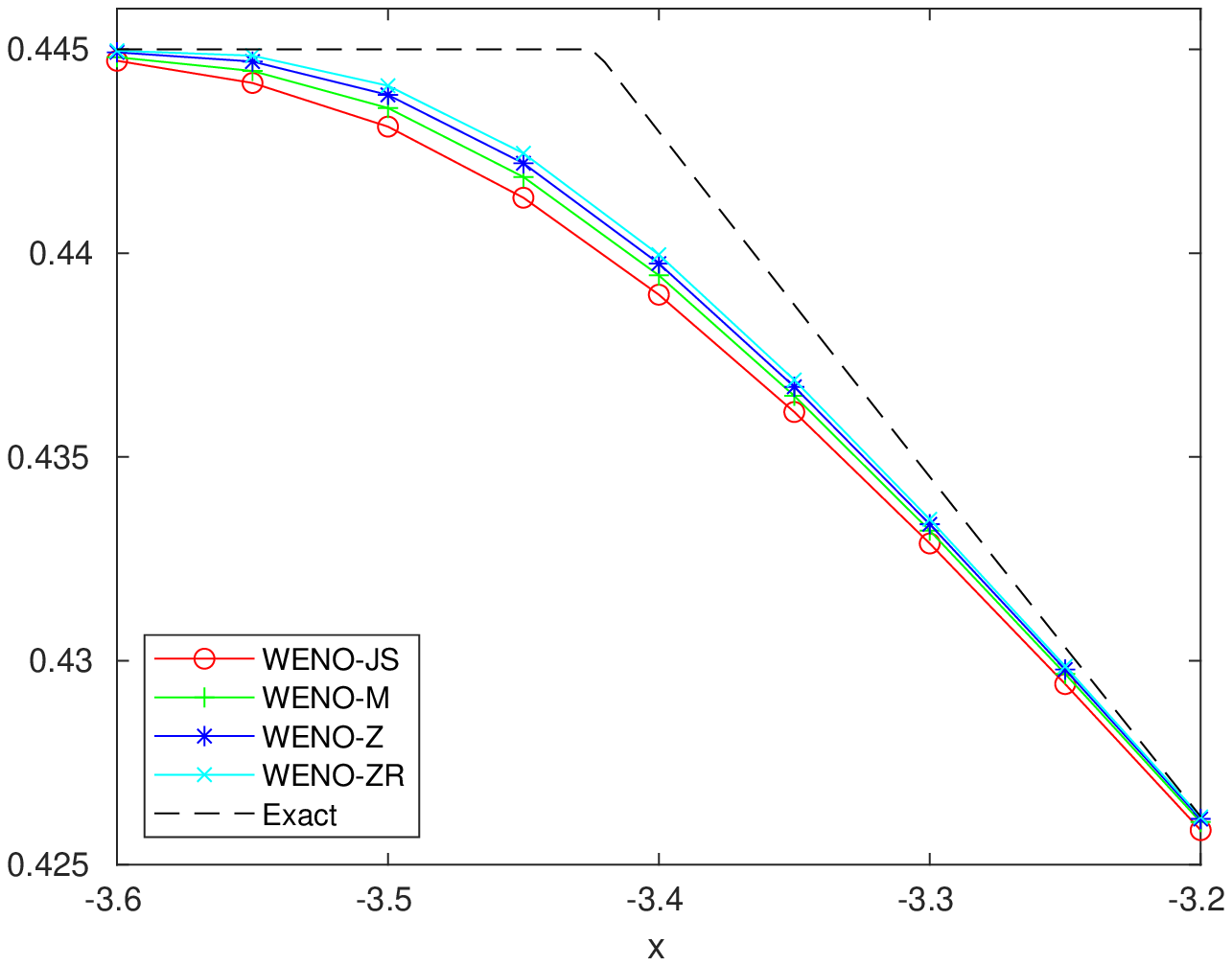}
\includegraphics[width=0.45\textwidth]{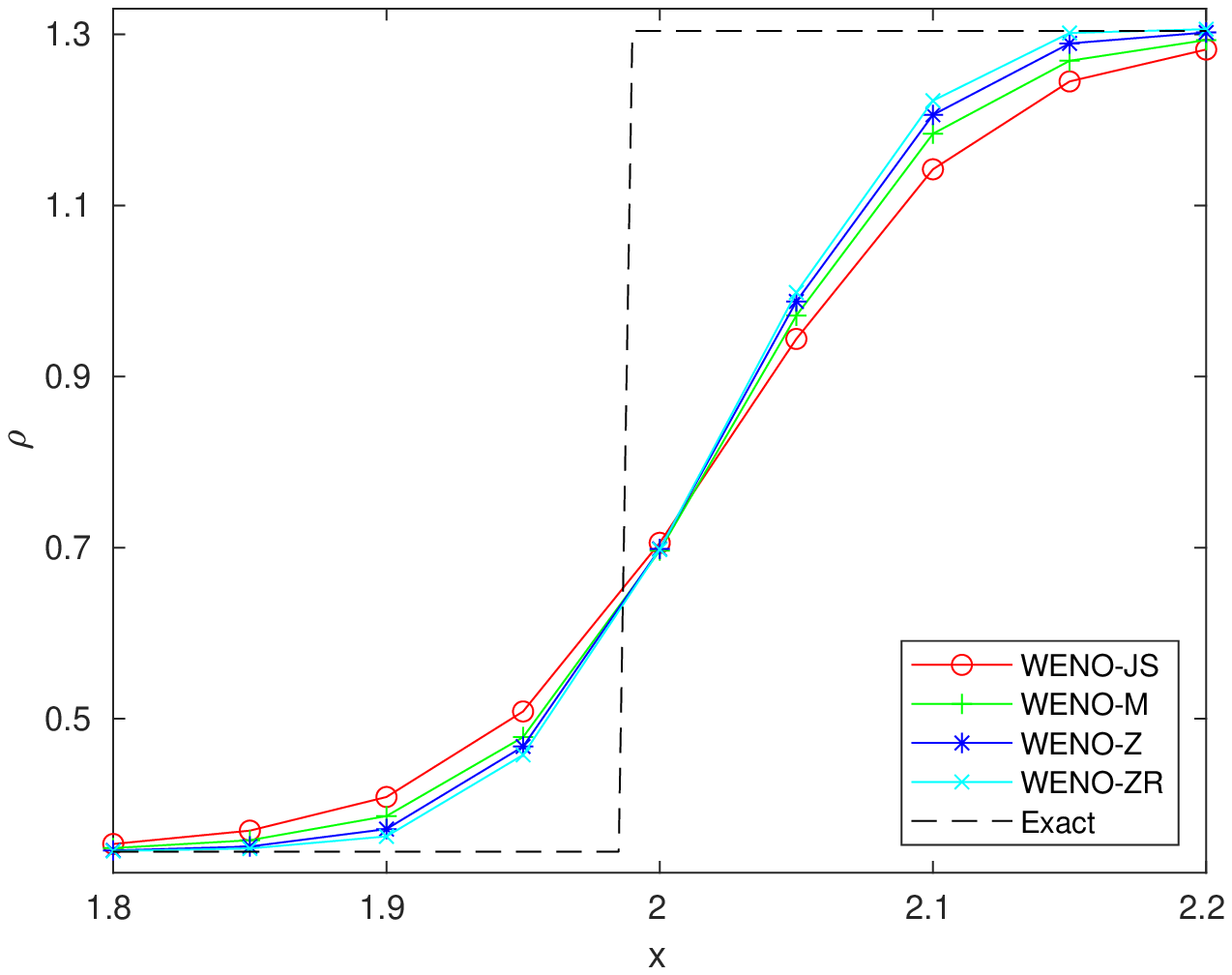}
\includegraphics[width=0.45\textwidth]{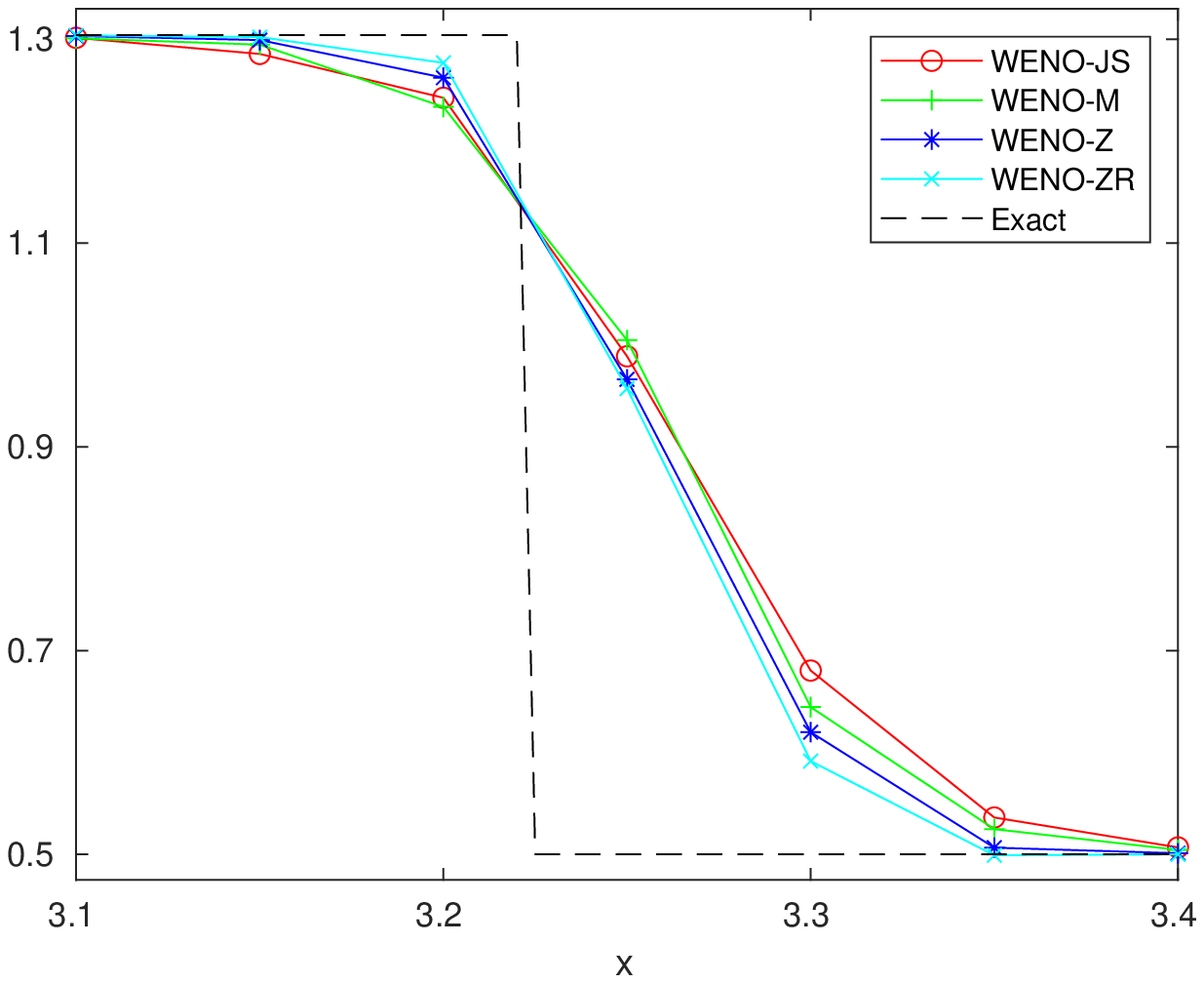}
\caption{Density profiles for the Lax problem \eqref{eq:euler_1d} and \eqref{eq:lax} at $T=1.3$ (top left), close-up view of the solutions in the boxes from left to right (top right, bottom left, bottom right) solved by WENO-JS (red), WENO-M (green), WENO-Z (blue) and WENO-ZR (cyan) with $N = 200$.
The dashed black lines are the exact solution.}
\label{fig:lax}
\end{figure}

\begin{figure}[h!]
\centering
\includegraphics[width=0.45\textwidth]{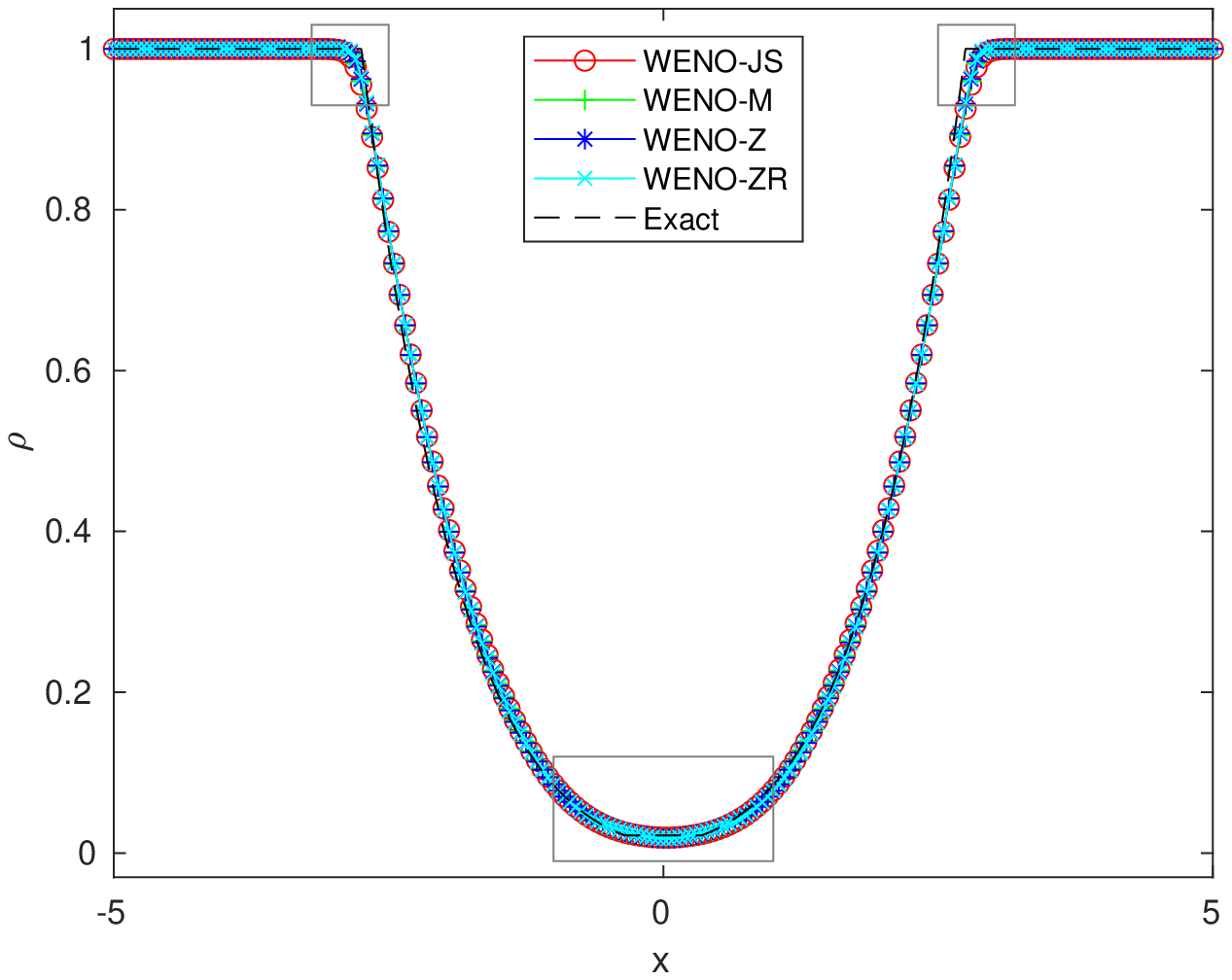}
\includegraphics[width=0.45\textwidth]{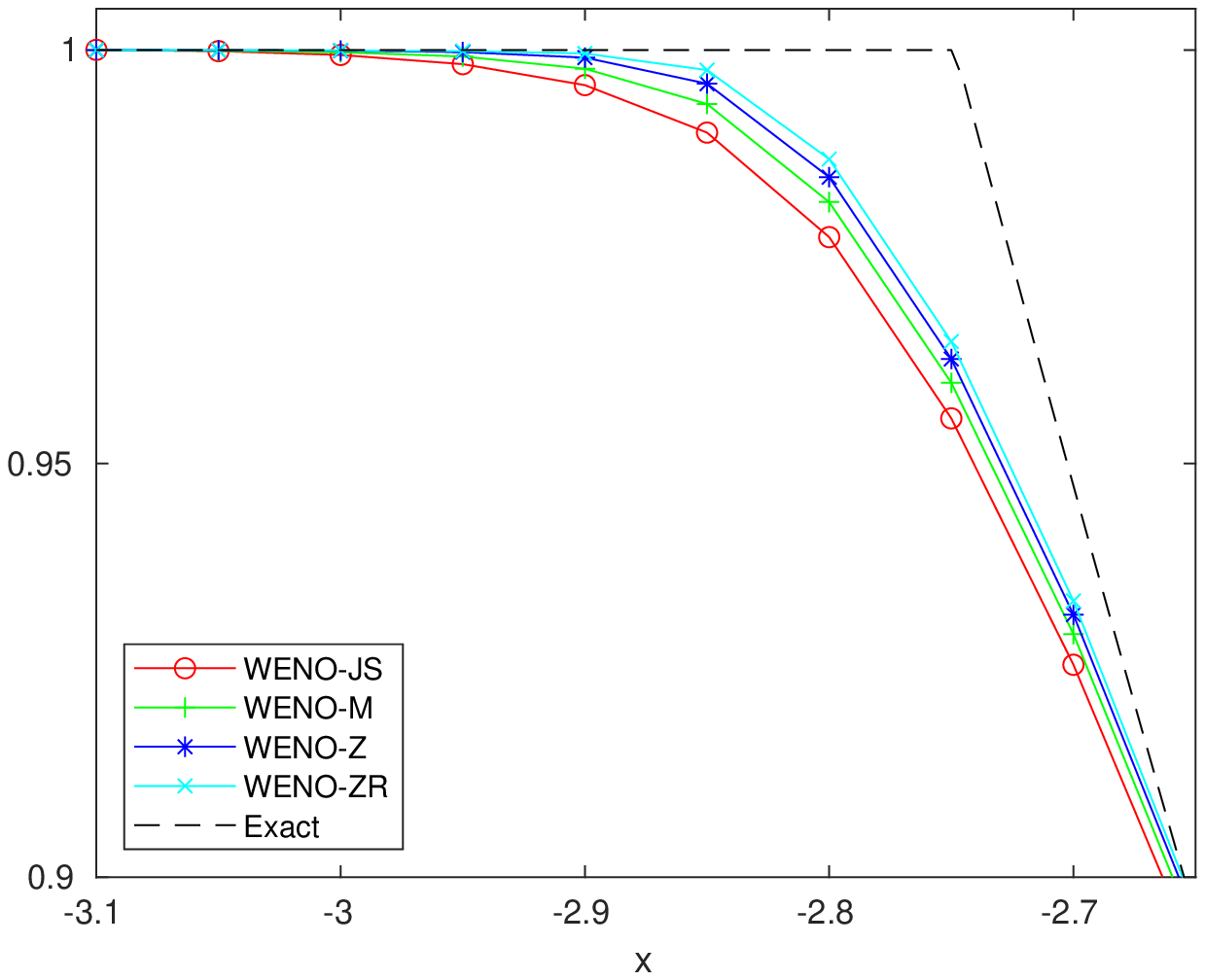}
\includegraphics[width=0.45\textwidth]{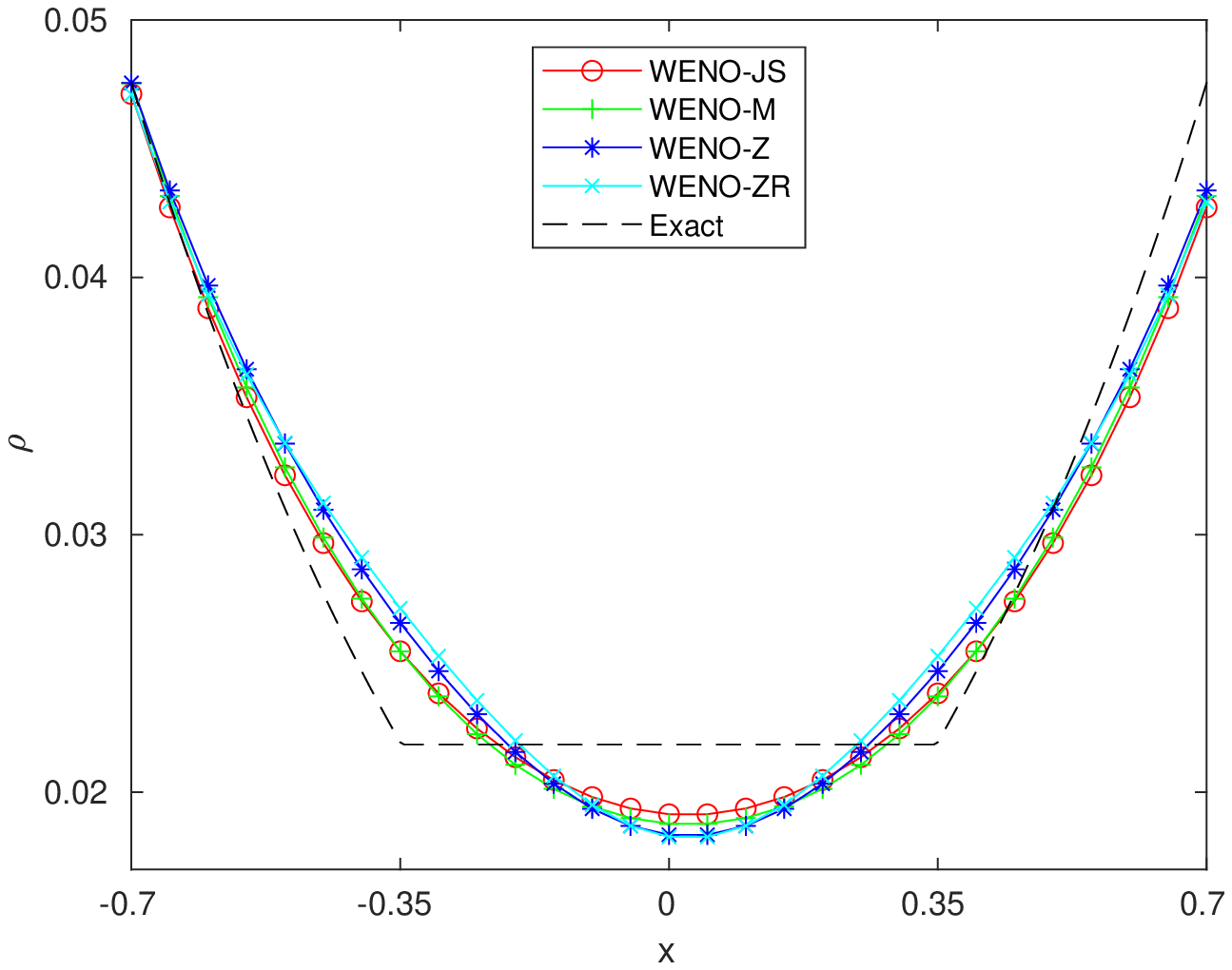}
\includegraphics[width=0.45\textwidth]{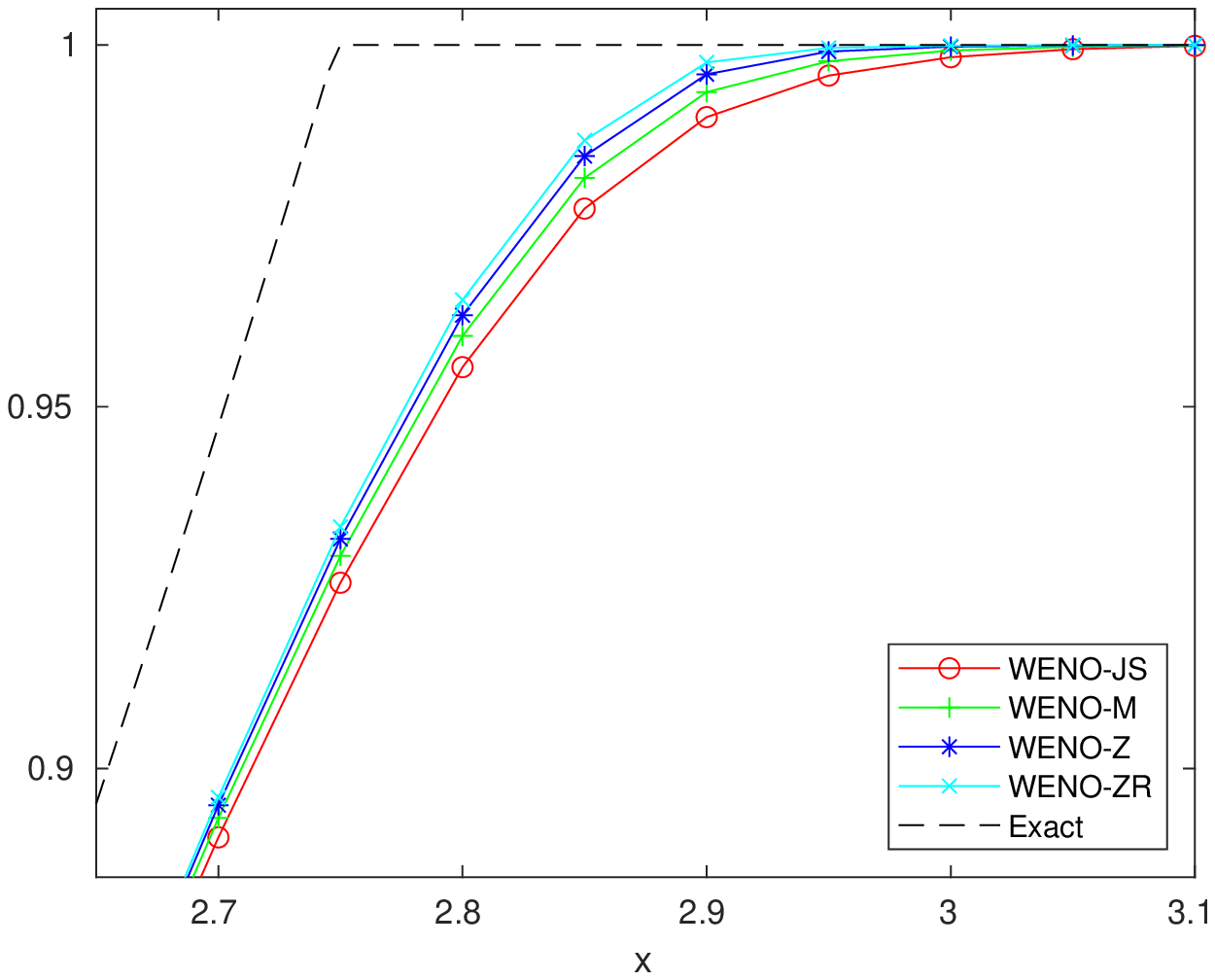}
\caption{Density profiles for the $123$ problem \eqref{eq:euler_1d} and \eqref{eq:123} at $T=1$ (top left), close-up view of the solutions in the boxes from left to right (top right, bottom left, bottom right) simulated by WENO-JS (red), WENO-M (green), WENO-Z (blue) and WENO-ZR (cyan) with $N = 200$.
The dashed black lines are the exact solution.}
\label{fig:one23}
\end{figure}

\begin{example} \label{ex:shock_entropy_wave}
The shock entropy wave interaction problem \cite{ShuOsherII} consists of a right moving Mach $3$ shock and an entropy wave in density.
The initial condition is specified as
$$
   (\rho, u, P ) = \left\{ 
                    \begin{array}{ll} 
                     (3.857143,~2.629369,~10.333333), & x < -4, \\ 
                     (1 + 0.2 \sin(kx),~0,~1),        & x \geqslant -4,
                    \end{array} 
                   \right. 
$$
where $k$ is the wave number of the entropy wave. 

For $k=5$, we take a uniform grid with $N = 200$ cells on the computational domain $[-5, 5]$ with the time step $\Delta t = 0.05 \Delta x$.
The numerical solution, computed by WENO-M with a high resolution of $N = 2000$ points, will be referred to as the ``exact'' solution.
Figure \ref{fig:shock_entropy_wave_k5} shows the the density profile approximated by those WENO schemes at $T = 2$.

If we increase the wave number $k$ to $10$, the computational domain $[-5, 5]$ is divided into $N = 500$ uniform cells and the time step is kept as $\Delta t = 0.05 \Delta x$.
We still take the solution computed by WENO-M with $N = 2000$ points as the ``exact'' solution.
The numerical solutions of the density at $T = 2$ are given in Figure \ref{fig:shock_entropy_wave_k10}.

We observe that WENO-ZR performs better in capturing the fine structure in the density profile than WENO-JS, WENO-M and WENO-Z and hence improves the resolution in this example, because of the sharper approximations provided by WENO-ZR.
\end{example}

\begin{figure}[h!]
\centering
\includegraphics[width=0.45\textwidth]{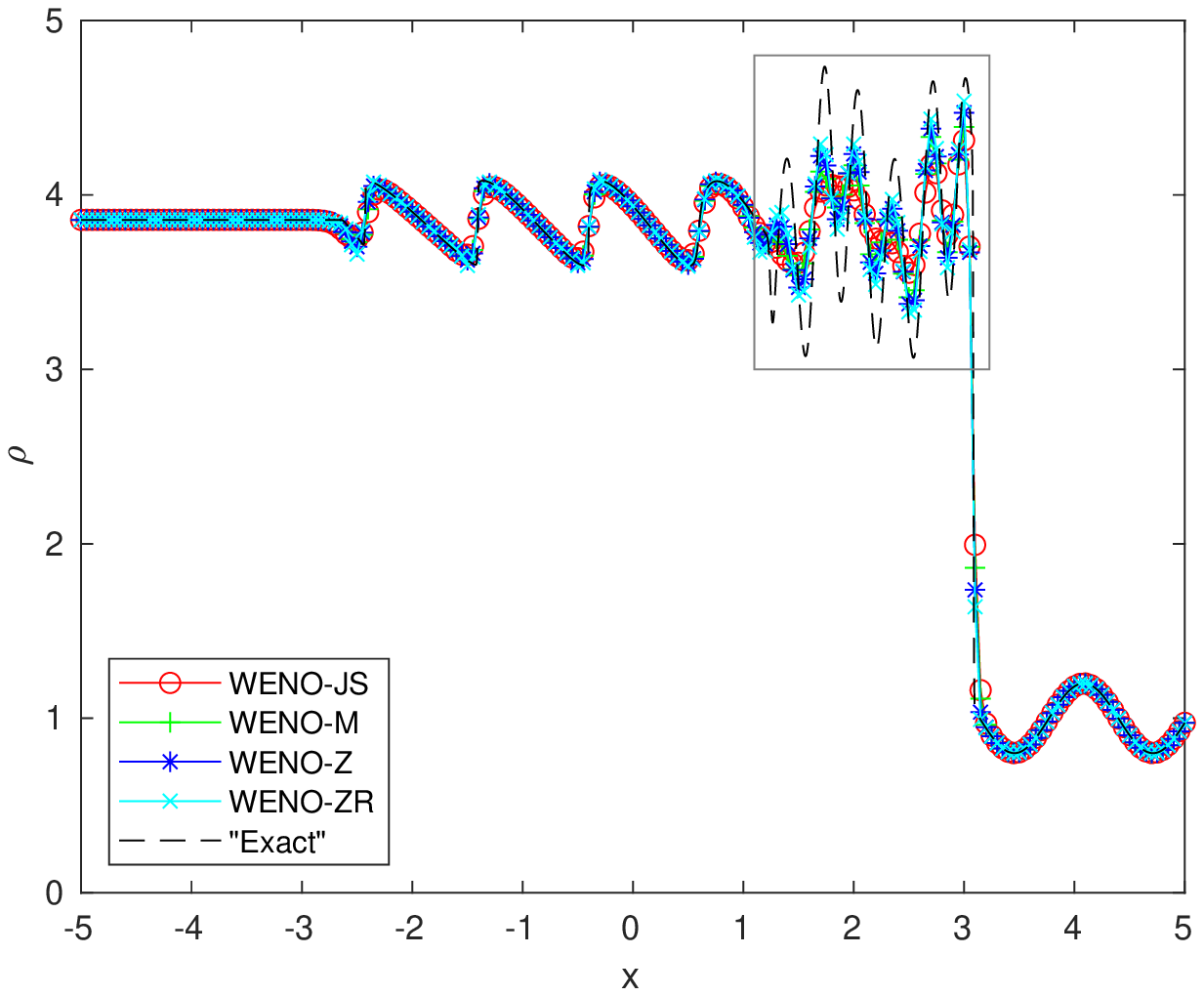}
\includegraphics[width=0.45\textwidth]{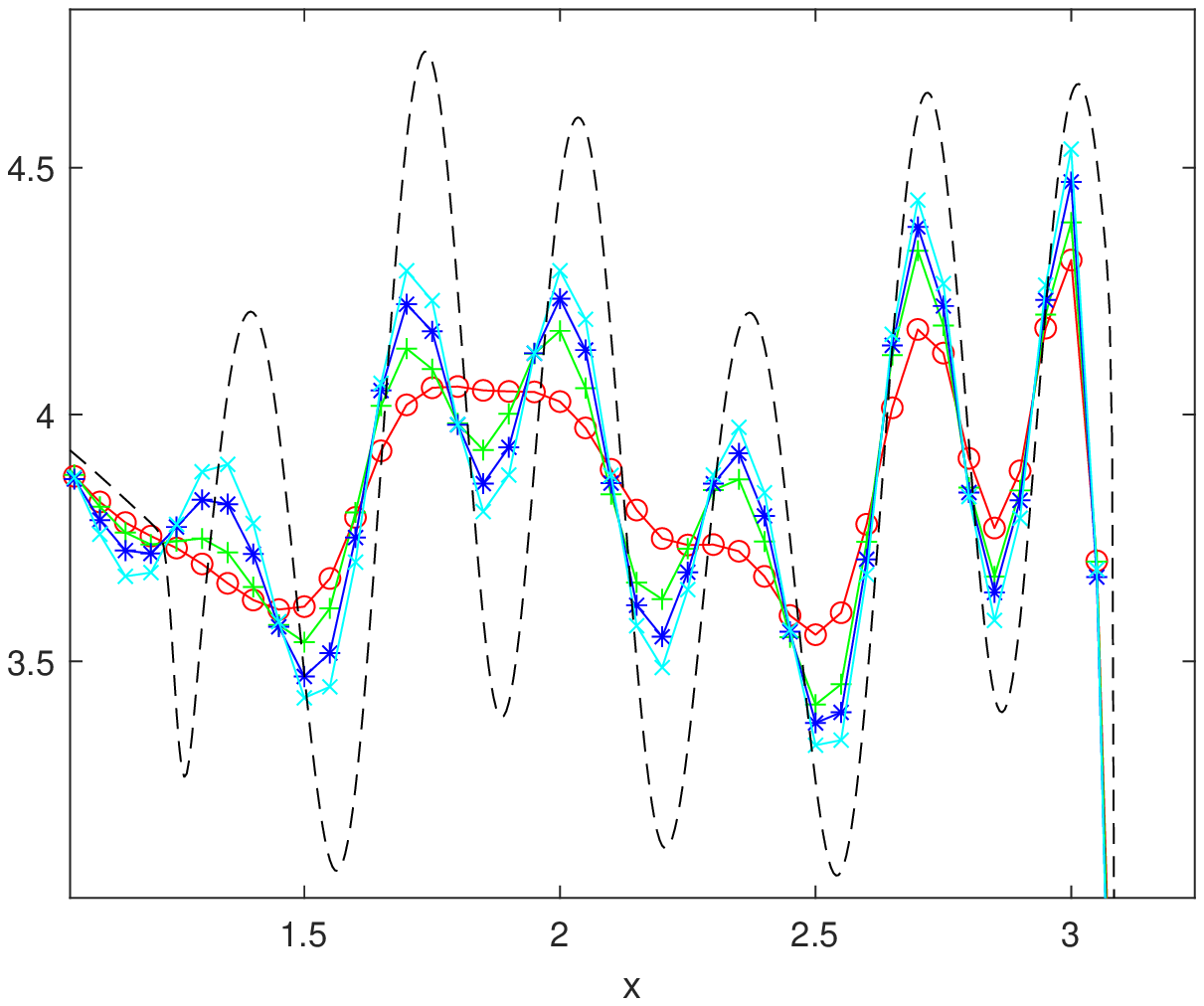}
\caption{Solution profiles for Example \ref{ex:shock_entropy_wave} with $k=5$ at $T = 2$ (left), close-up view of the solutions in the box (right) computed by WENO-JS (red), WENO-M (green), WENO-Z (blue) and WENO-ZR (cyan) with $N = 200$.
The dashed black lines are generated by WENO-M with $N = 2000$.}
\label{fig:shock_entropy_wave_k5}
\end{figure}

\begin{figure}[h!]
\centering
\includegraphics[width=0.45\textwidth]{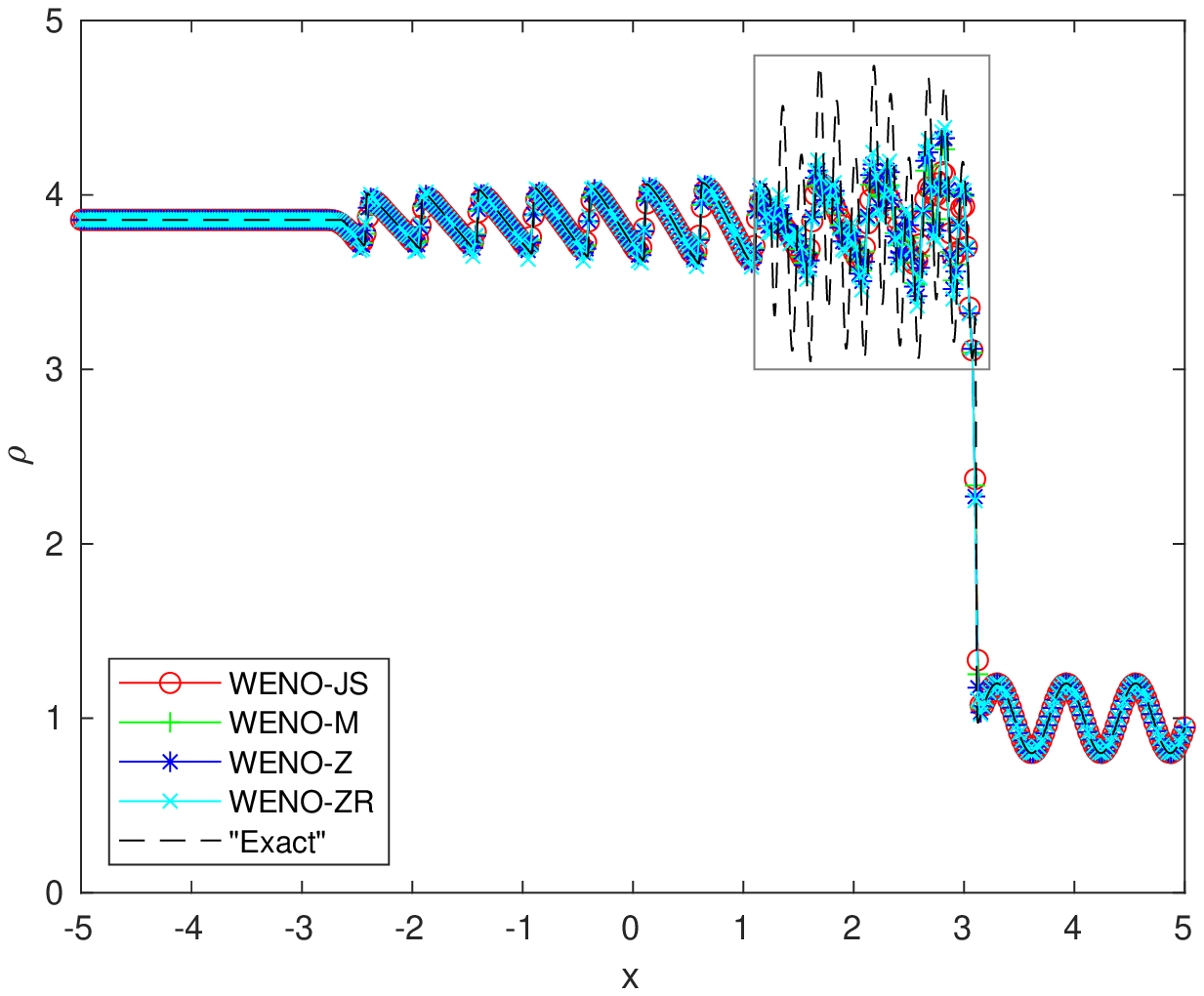}
\includegraphics[width=0.45\textwidth]{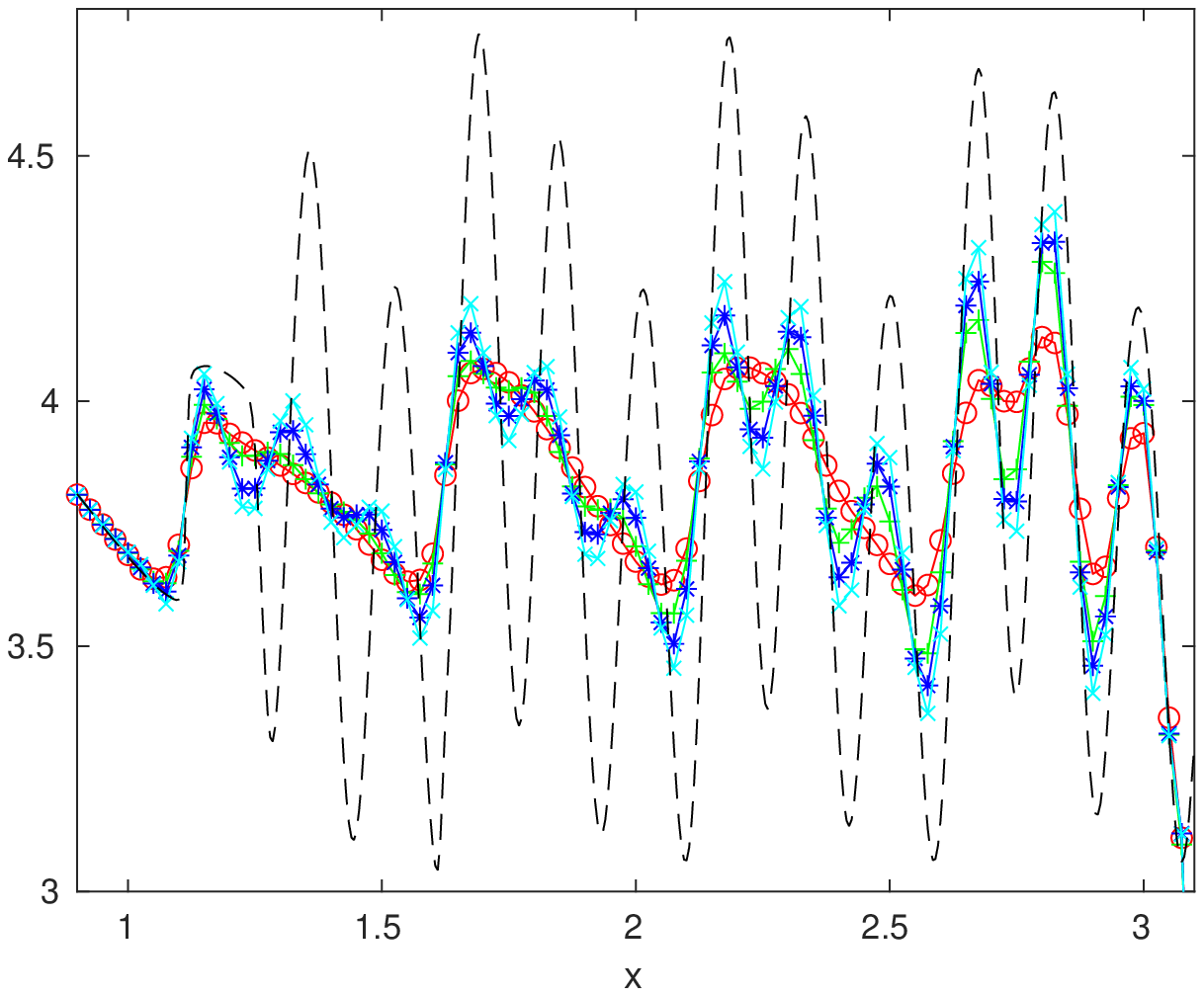}
\caption{Solution profiles for Example \ref{ex:shock_entropy_wave} with $k=10$ at $T = 2$ (left), close-up view of the solutions in the box (right) computed by WENO-JS (red), WENO-M (green), WENO-Z (blue) and WENO-ZR (cyan) with $N = 400$.
The dashed black lines are generated by WENO-M with $N = 2000$.}
\label{fig:shock_entropy_wave_k10}
\end{figure}

\subsection{2D Euler equations} 
The 2D Euler equations of gas dynamics are of the form
\begin{equation} \label{eq:euler_2d}
 U_t + F(U)_x + G(U)_y = 0, 
\end{equation}
where the conserved vector $U$ and the flux functions $F,\, G$ in the $x,\, y$ directions, respectively, are 
\begin{align*}
    U &= \left[ \rho, ~\rho u, ~\rho v, ~E \right]^T, \\
 F(U) &= \left[ \rho u, ~\rho u^2 + P, ~\rho u v, ~u(E+P) \right]^T, \\ 
 G(U) &= \left[ \rho v, ~\rho u v, ~\rho v^2 + P, ~v(E+P) \right]^T.
\end{align*}
As in 1D case, $\rho$ is the density and $P$ is the pressure. 
The primitive variables $u$ and $v$ denote $x$-component and $y$-component velocity, respectively.
The specific kinetic energy $E$ has the formula
$$
   E = \frac{P}{\gamma - 1} + \frac{1}{2} \rho ( u^2 + v^2 ) 
$$
with $\gamma = 1.4$ for the ideal gas. 

\begin{example} \label{ex:euler_2d_riemann}
We consider the Riemann problem in \cite{Kurganov} for the 2D Euler equations \eqref{eq:euler_2d}, along with the initial condition,
$$
   (\rho, u, v, P ) = \left\{ 
                       \begin{array}{ll} 
                        (1.5,~0,~0,~1.5),             & x > 0.8,~y > 0.8, \\
                        (0.5323,~1.206,~0,~0.3),      & x < 0.8,~y > 0.8, \\
                        (0.138,~1.206,~1.206,~0.029), & x < 0.8,~y < 0.8, \\
                        (0.5323,~0,~1.206,~0.3),      & x > 0.8,~y < 0.8,
                       \end{array}
                      \right.
$$
We divide the square computational domain $[0,1] \times [0,1]$ into $N_x \times N_y = 200 \times 200$ uniform cells with the time step $\Delta t = 0.2 \min (\Delta x, \Delta y)$.
The free stream boundary conditions are imposed in both $x$- and $y$-directions.
The numerical performance of density computed by WENO-ZR at the final time $T=0.8$, compared with those by WENO-JS, WENO-M and WENO-Z, is presented in Figure \ref{fig:eer2d}.
It is observed that WENO-ZR achieves a better resolution of the jet in the zones of strong velocity shear and the instability along the jet's neck.
\end{example}

\begin{figure}[h!]
\centering
\includegraphics[width=0.45\textwidth]{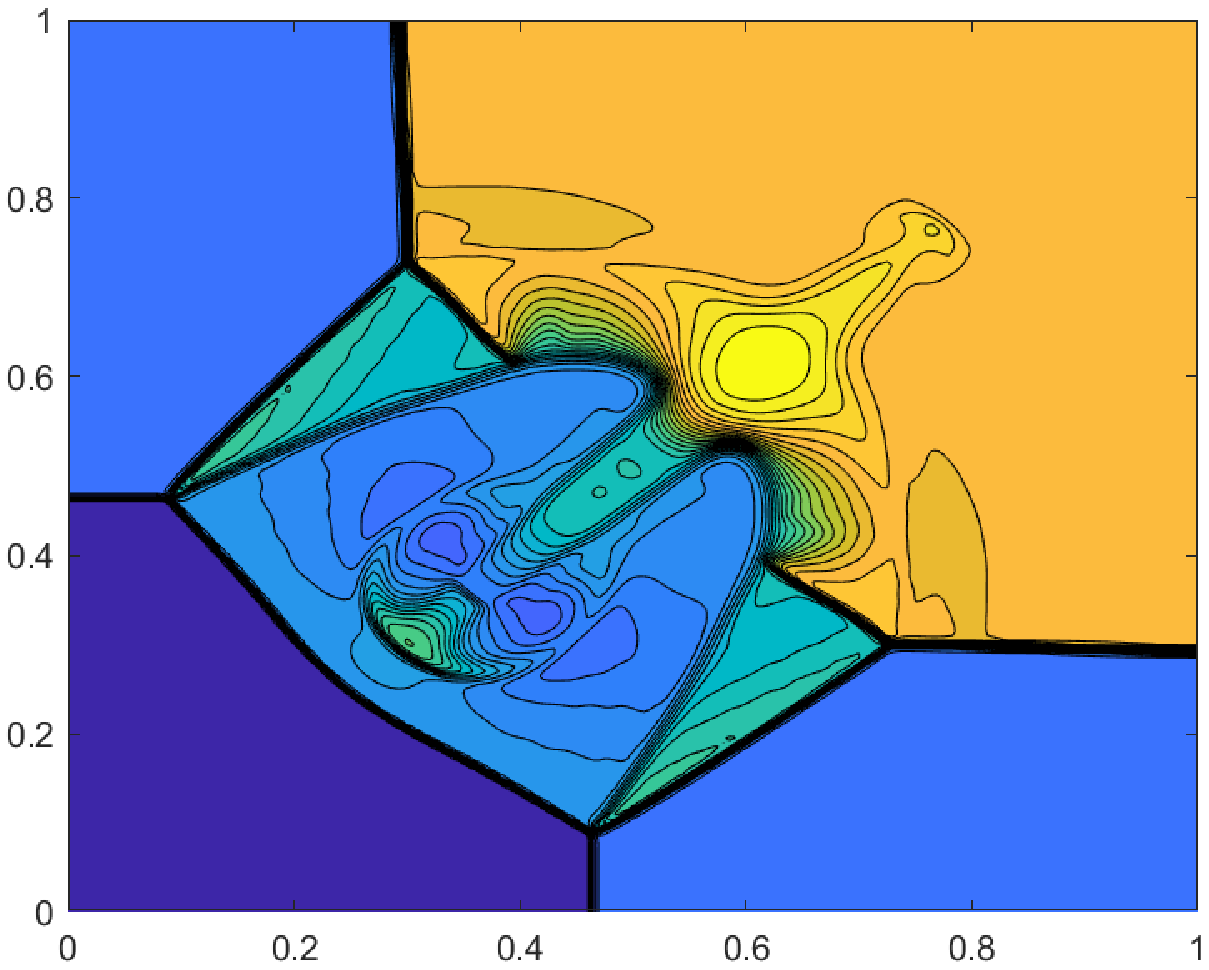}
\includegraphics[width=0.45\textwidth]{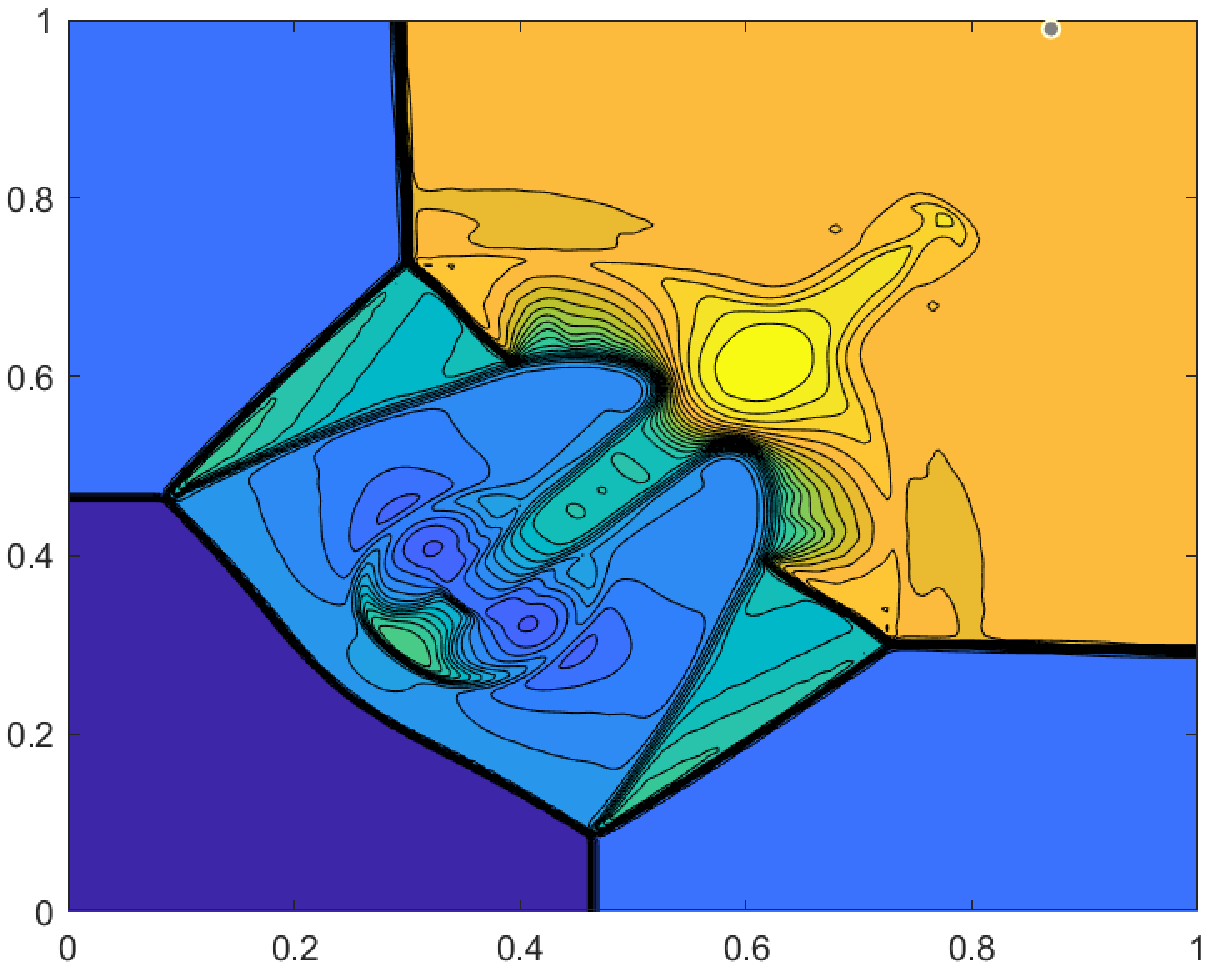}
\includegraphics[width=0.45\textwidth]{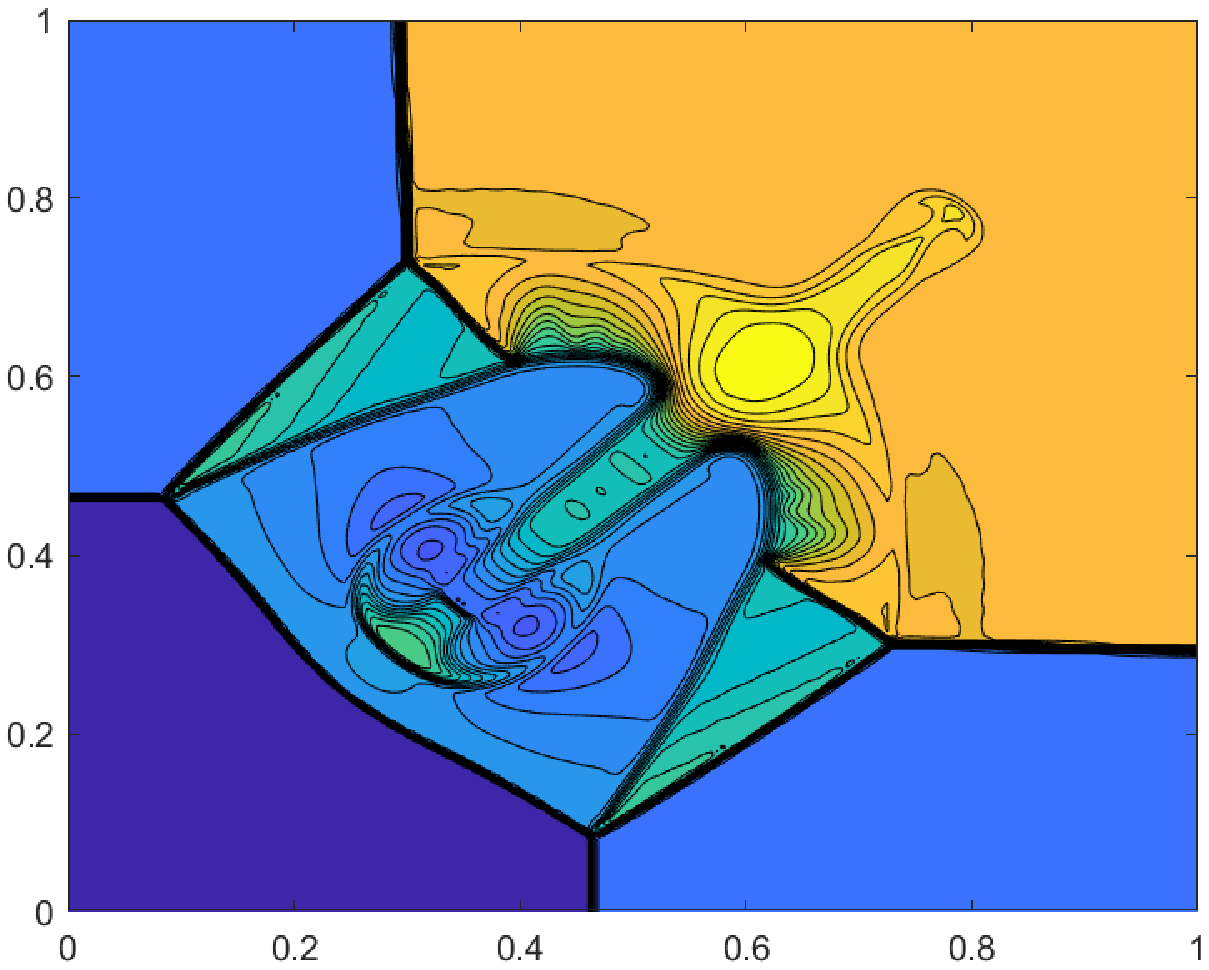}
\includegraphics[width=0.45\textwidth]{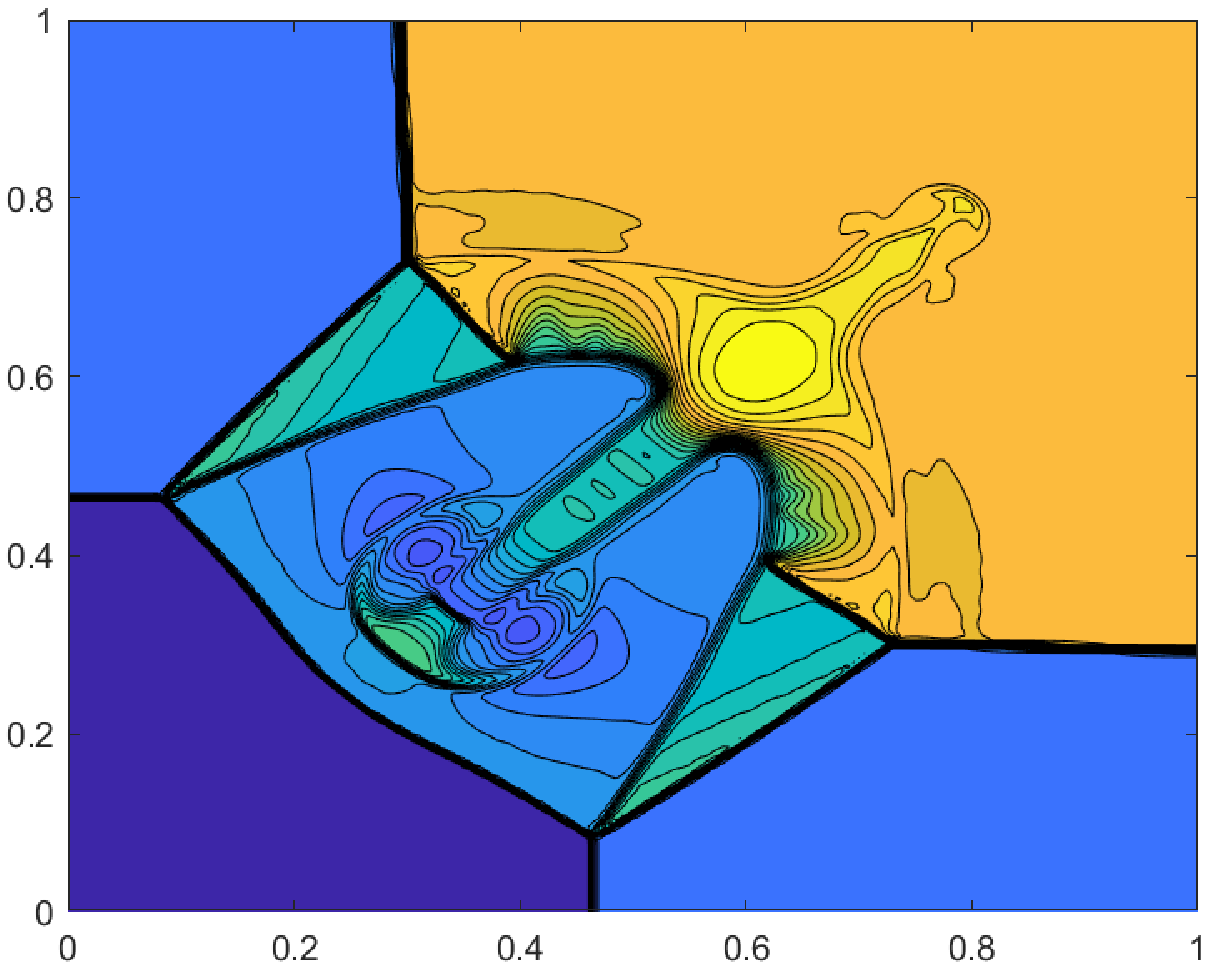}
\caption{Density in the filled contour plot for Example \ref{ex:euler_2d_riemann} at $T=0.8$ by WENO-JS (top left), WENO-M (top right), WENO-Z (bottom left) and WENO-ZR (bottom right) with $N_x = N_y = 200$.
Each contour plot displays contours at $30$ levels of the density.}
\label{fig:eer2d}
\end{figure}

\begin{example} \label{ex:double_mach_reflection}
We end this section with the double Mach reflection problem \cite{Woodward} that is again a standard test case for high resolution schemes.
The initial condition is given by 
$$
   (\rho, u, v, P ) = \left\{ 
                       \begin{array}{ll} 
                        (8,~8.25 \cos \theta,~-8.25 \sin \theta,~116.5), & x < \frac{1}{6} + \frac{y}{\sqrt{3}}, \\ 
                        (1.4,~0,~0,~1), & x \geqslant \frac{1}{6} + \frac{y}{\sqrt{3}}, 
                       \end{array} 
                      \right. 
$$
with $\theta = \frac{\pi}{6}$.
The computational domain is chosen as $[0,4] \times [0,1]$ with $N_x \times N_y = 480 \times 119$ uniform cells. 
We take the time step $\Delta t = 0.005 \min (\Delta x, \Delta y)$.
For the bottom boundary $y=0$, the left initial condition is imposed for the interval $\left[ 0, \frac{1}{6} \right)$ on the $x$-axis whereas the reflective boundary condition is applied to the interval $\left[ \frac{1}{6}, 4 \right]$.
At the top boundary $y=1$, the exact motion of the right-moving Mach $10$ oblique shock is used.
The free stream boundary conditions are imposed for the left boundary $x=0$ and the right boundary $x=4$.
The simulation is conducted until the final time $T = 0.2$, when the strong shock, joining the contact surface and transverse wave, sharpens.
In Figure \ref{fig:dmr}, the numerical density of each WENO scheme for the domain $[0,3] \times [0,1]$ at the final time is plotted.
We can see that WENO-ZR captures the structure, which exists in all experimented schemes, with a higher resolution.
\end{example}

\begin{figure}[h!]
\centering
\includegraphics[width=0.45\textwidth]{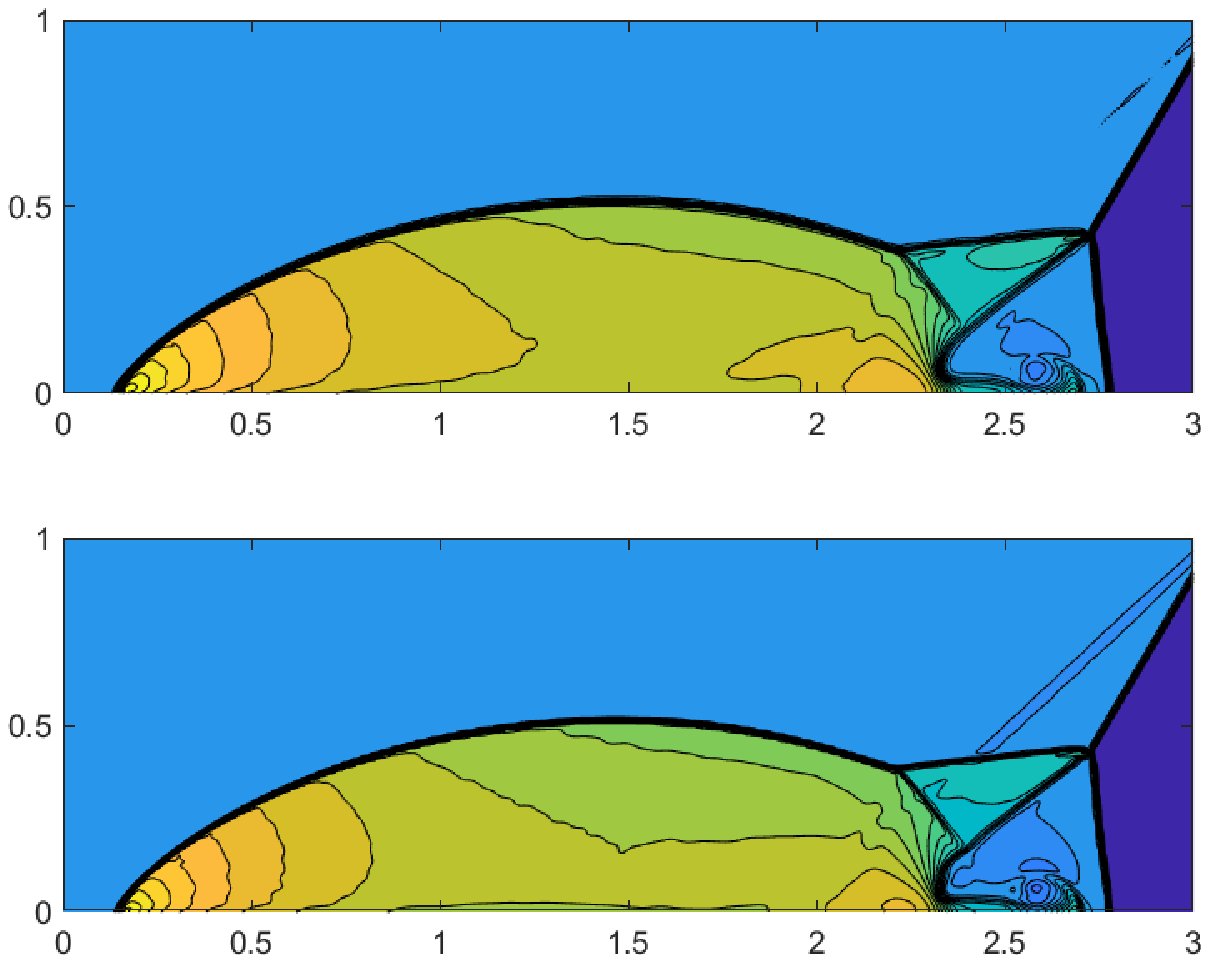}
\includegraphics[width=0.45\textwidth]{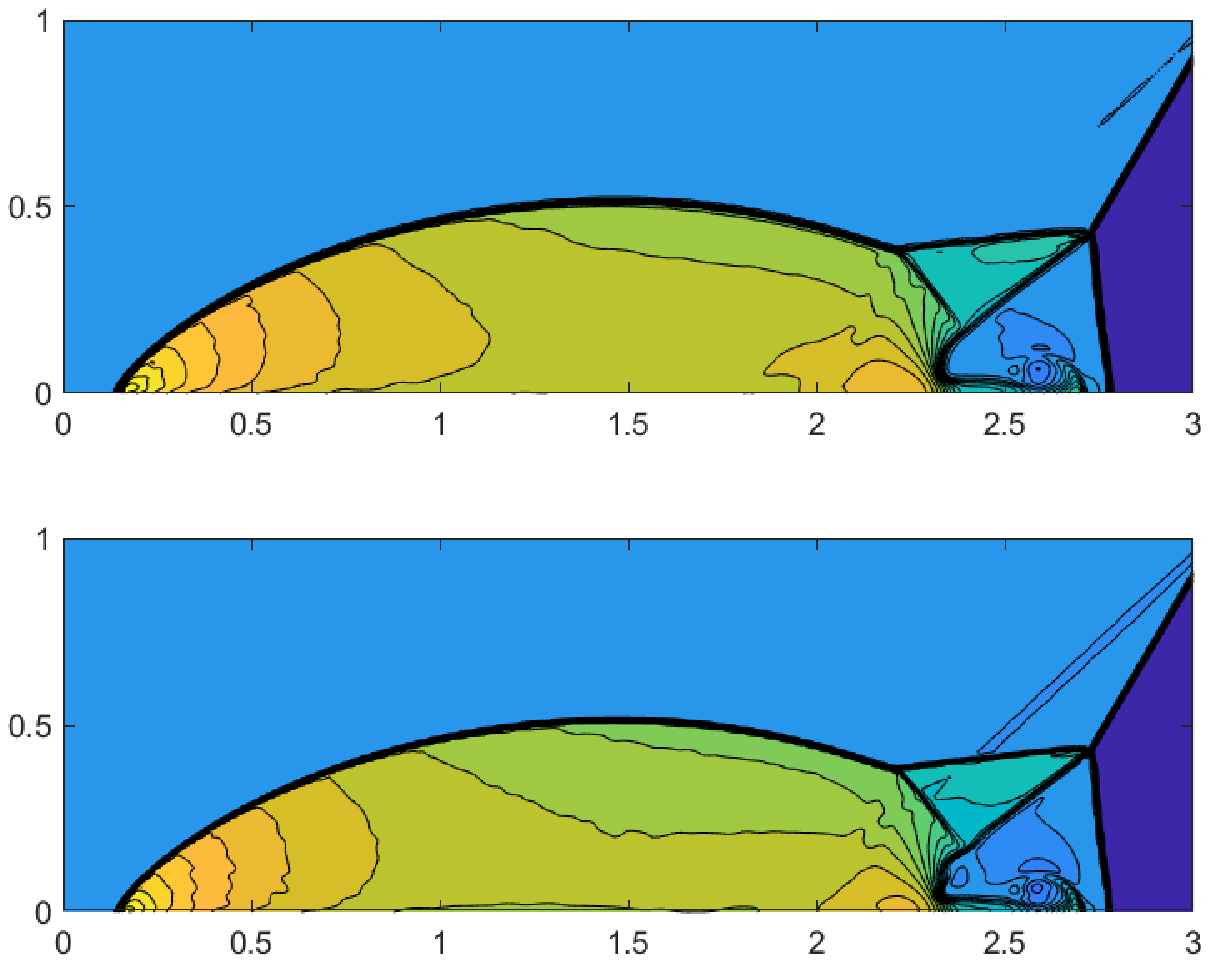}
\caption{Density in the filled contour plot for Example \ref{ex:double_mach_reflection} at $T=0.2$ by WENO-JS (top left), WENO-M (top right), WENO-Z (bottom left) and WENO-ZR (bottom right) with $N_x = 480 $ and $N_y = 119$.
Each contour plot displays contours at $30$ levels of the density.}
\label{fig:dmr}
\end{figure}


\section{Concluding remarks} 
In this paper, we have designed a new set of Z-tyoe nonlinear weights for the fifth-order finite difference WENO scheme.
It is shown that the new nonlinear weights satisfy the sufficient condition for the fifth order accuracy, even at critical points.
We also have proved that the nonlinear weights corresponding to the discontinuity increase as the value of $p$ increases, which sharpens the discontinuities.
The relations between the nonlinear weights and the dissipation around discontinuities has been discussed briefly and will be explored further in the future work.
Numerical results indicate the improved behavior of the decrease in dissipation around discontinuities while preserving the essentially non-oscillatory behavior in smooth regions.

\section*{Acknowledgments}
We would like to thank Tsz Fun Hung, Xifeng Jin and Biao Wang for the fruitful discussions of nonlinear weight analysis. 
This work was supported in part by Samsung Science \& $\&$ Technology Foundation under the grant number 2021R1A2C3009648.

\section*{Appendix} \label{sec:appendix}
In this appendix, we will prove the inequalities that are used in the Section \ref{sec:new_Z_weight}. 

\begin{theorem} \label{thm:pthroot}
If $a > b > 0$, then
\begin{equation} \label{eq:pthroot}
 \left( \sqrt[p]{a} - \sqrt[p]{b} \right)^p < a-b
\end{equation}
for $p>1$.
\end{theorem}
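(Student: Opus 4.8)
The plan is to reduce the inequality to a one-variable statement by homogeneity and then settle it with elementary calculus.

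First I would set $x = \sqrt[p]{a}$ and $y = \sqrt[p]{b}$, so that $x > y > 0$ and \eqref{eq:pthroot} becomes $(x-y)^p < x^p - y^p$. Dividing through by $x^p > 0$ and writing $t = y/x$, this is equivalent to
\begin{equation*}
 (1-t)^p < 1 - t^p, \qquad t \in (0,1).
\end{equation*}
Here $t$ lies in the \emph{open} interval $(0,1)$ precisely because $a > b > 0$ forces $x > y > 0$, and that is where the strictness of the conclusion will come from.

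Next I would introduce $g(t) = 1 - t^p - (1-t)^p$ on $[0,1]$ and show $g(t) > 0$ for $t \in (0,1)$. Since $g$ is continuous on $[0,1]$ with $g(0) = g(1) = 0$, it suffices to control the sign of $g'$ on $(0,1)$. We have $g'(t) = p\left[(1-t)^{p-1} - t^{p-1}\right]$, and because $p > 1$ the map $s \mapsto s^{p-1}$ is strictly increasing on $[0,\infty)$; hence $g'(t) > 0$ for $t < \tfrac{1}{2}$ and $g'(t) < 0$ for $t > \tfrac{1}{2}$. Thus $g$ is strictly increasing on $[0,\tfrac{1}{2}]$ and strictly decreasing on $[\tfrac{1}{2},1]$, so $g(t) > \min\{g(0), g(1)\} = 0$ for every $t \in (0,1)$. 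Multiplying the resulting strict inequality $(1-t)^p < 1 - t^p$ back by $x^p$ gives $(x-y)^p < x^p - y^p = a - b$, as claimed.

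I do not expect a serious obstacle; the only points needing care are keeping the inequality strict, which is handled by noting that $t \in (0,1)$ and $p > 1$ both strictly, and the fact that $g'$ can be unbounded near the endpoints when $0 < p-1 < 1$ — harmless, since $g$ is continuous up to the endpoints and only its monotonicity on the open interval is used. An essentially equivalent alternative is strict superadditivity of $s \mapsto s^p$ for $p>1$: with $u = x-y > 0$ and $v = y > 0$ one has $x^p = (u+v)^p > u^p + v^p = (x-y)^p + y^p$. I would record this as a remark but present the calculus argument in full, as it also makes transparent the role of the extremal value $t = \tfrac{1}{2}$.
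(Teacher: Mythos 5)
Your proof is correct, and it takes a different route from the paper. The paper works on the root side: it sets $\eta(x)=\sqrt[p]{x}$, uses strict concavity ($\eta''<0$ for $p>1$) together with the scaling identity $\eta(sx)>s\,\eta(x)$ for $s\in(0,1)$ to get strict superadditivity $\eta(a-b)+\eta(b)>\eta(a)$, and then raises to the $p$th power. You instead move to the power side, reduce by homogeneity to the one-variable inequality $(1-t)^p<1-t^p$ on $(0,1)$, and settle it by a first-derivative analysis of $g(t)=1-t^p-(1-t)^p$, using that $g(0)=g(1)=0$ and that $g$ increases up to $t=\tfrac12$ and decreases afterwards; your noted alternative (strict superadditivity of $s\mapsto s^p$, i.e.\ $(u+v)^p>u^p+v^p$) is essentially the dual of the paper's concavity argument. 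What each buys: the paper's argument is shorter and generalizes immediately to any strictly concave function vanishing at $0$ (superadditivity is the real content), while yours is more self-contained — it needs only elementary monotonicity of $s\mapsto s^{p-1}$, avoids invoking concavity or a second-derivative computation, and makes the extremal case $t=\tfrac12$ (i.e.\ $b=a-b$) visible. One small inaccuracy in your closing remarks: $g'(t)=p\bigl[(1-t)^{p-1}-t^{p-1}\bigr]$ is in fact bounded on $[0,1]$ for every $p>1$ (it is $g''$, not $g'$, that can blow up at the endpoints when $1<p<2$), but this is harmless since your argument never uses boundedness of $g'$.
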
 
\begin{proof}
Let $\eta(x) = \sqrt[p]{x}$ for $x \geqslant 0$. 
Then $\eta$ is strictly concave since $\eta''(x)$ is always negative for $p>1$. 
Set $s \in (0,1)$.
If $x \ne 0$, we have, by the definition of strict concavity,
$$
   \eta(sx) = \eta \left( sx + (1-s) \cdot 0 \right) > s \eta(x) + (1-s) \eta(0) = s \eta(x).
$$
So for $a > b > 0$,
$$
   \eta(a-b) + \eta(b) = \eta \left( a \frac{a-b}{a} \right) + \eta \left( a \frac{b}{a} \right) > \frac{a-b}{a} \eta(a) + \frac{b}{a} \eta(a) = \eta(a).
$$
Thus $\sqrt[p]{a} - \sqrt[p]{b} < \sqrt[p]{a-b}$.
Taking the $p$th power of both sides, we have the inequality \eqref{eq:pthroot}.
\end{proof}

\begin{theorem} \label{thm:phi}
Let $c>1$ and define $\varphi: (0, \infty) \to \R$ by $\varphi(x) = \left( \sqrt[x]{c} - 1 \right)^x$. Then
\begin{enumerate}[label=\alph*)]
\item $\varphi$ is strictly decreasing on $(0, \infty)$.
\item $\displaystyle \lim_{x \to 0^+} \varphi(x) = c$ and $\displaystyle \lim_{x \to \infty} \varphi(x) = 0$.
\end{enumerate}
\end{theorem}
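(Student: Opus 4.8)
\emph{Proof proposal.} The plan is to pass to the logarithm $g(x) := \ln \varphi(x) = x \ln\!\bigl( c^{1/x} - 1 \bigr)$, which is well defined because $c>1$ forces $c^{1/x} > 1$ for every $x>0$, and to treat both parts through $g$. For part a), I would differentiate: using $\frac{d}{dx} c^{1/x} = -c^{1/x}(\ln c)/x^2$ one gets
\[
 g'(x) = \ln\!\bigl( c^{1/x} - 1 \bigr) - \frac{c^{1/x} \ln c}{x \bigl( c^{1/x} - 1 \bigr)}.
\]
The crucial step is the substitution $u = c^{1/x}$: since $c>1$, as $x$ runs over $(0,\infty)$ the value $u$ runs over $(1,\infty)$, and $\frac{\ln c}{x} = \ln u$, so the two parameters $c$ and $x$ collapse into the single variable $u$ and
\[
 g'(x) = \ln(u-1) - \frac{u \ln u}{u-1}.
\]
Because $u-1>0$, the sign of $g'(x)$ equals that of $(u-1)\ln(u-1) - u\ln u$, so it suffices to show $(u-1)\ln(u-1) < u\ln u$ for all $u>1$. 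I would prove this by setting $h(u) := u\ln u - (u-1)\ln(u-1)$, noting $h(u)\to 0$ as $u\to 1^+$ (both summands tend to $0$), and computing $h'(u) = \ln u - \ln(u-1) = \ln\!\frac{u}{u-1} > 0$ on $(1,\infty)$; hence $h>0$ there. This gives $g'<0$ on all of $(0,\infty)$, so $g$, and therefore $\varphi$, is strictly decreasing.

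For part b), once a) is in hand, positivity of $\varphi$ together with monotonicity guarantees that $L := \lim_{x\to\infty}\varphi(x)$ exists in $[0,\infty)$ and $M := \lim_{x\to 0^+}\varphi(x)$ exists in $(0,\infty]$. For $L$: from $\varphi(x) \ge L$ and raising to the power $1/x$, $c^{1/x} - 1 \ge L^{1/x}$; letting $x\to\infty$, the left-hand side tends to $0$ while $L^{1/x}\to 1$ if $L>0$, which is impossible, so $L=0$. For $M$: I would factor $\varphi(x) = c\bigl( 1 - c^{-1/x} \bigr)^x$ and note that the logarithm of the second factor is $x\ln\!\bigl(1 - c^{-1/x}\bigr)$, where $c^{-1/x}\to 0$ as $x\to 0^+$ (so the $\ln$ tends to $0$) while $x\to 0$; hence the second factor tends to $1$ and $M = c$.

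The step I expect to be the main obstacle is the monotonicity in a): the entanglement of $c$ and $x$ in $\varphi$ makes a head-on attack clumsy, and the whole argument hinges on recognizing that $u = c^{1/x}$ turns $g'$ into a function of $u$ alone, after which the inequality $(u-1)\ln(u-1) < u\ln u$ is elementary via $h$. The limits in b) are routine; if one prefers to avoid invoking a), the asymptotics $c^{1/x} - 1 \sim (\ln c)/x$ as $x\to\infty$ (so $g(x)\sim -x\ln x\to-\infty$) and $c^{-1/x}\to 0$ as $x\to 0^+$ deliver the two limits directly.
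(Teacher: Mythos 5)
Your proposal is correct. It shares the paper's starting point — pass to $g(x)=\ln\varphi(x)=x\ln\bigl(c^{1/x}-1\bigr)$ and show $g'<0$ — but finishes both parts somewhat differently. For part a), after writing $g'(x)=\ln(u-1)-\frac{u\ln u}{u-1}$ with $u=c^{1/x}$, the paper disposes of the sign in one line: since $\frac{u}{u-1}>1$ and $\ln u>0$, one has $g'(x)<\ln(u-1)-\ln u<0$; you instead multiply by $u-1$ and prove the (slightly sharper) inequality $(u-1)\ln(u-1)<u\ln u$ via the auxiliary function $h(u)=u\ln u-(u-1)\ln(u-1)$, using $h\to 0$ as $u\to 1^+$ and $h'(u)=\ln\frac{u}{u-1}>0$ — a valid, if marginally longer, route to the same conclusion. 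For part b), the paper computes $\lim_{x\to 0^+}x\ln(c^{1/x}-1)=\ln c$ by L'H\^opital and gets the limit at infinity from $x\ln(c^{1/x}-1)\to-\infty$; you avoid L'H\^opital entirely: the factorization $\varphi(x)=c\bigl(1-c^{-1/x}\bigr)^x$ gives the limit $c$ at $0^+$ since $x\ln\bigl(1-c^{-1/x}\bigr)\to 0$, and at infinity you use monotonicity to get existence of $L$ and rule out $L>0$ by comparing $x$-th roots ($c^{1/x}-1\ge L^{1/x}$ with the left side tending to $0$ and the right to $1$). Your treatment of the limits is the more elementary of the two and also quietly supplies the existence of the limits, which the paper's computation does not need but yours exploits; both arguments are sound.
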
 
\begin{proof}
Let $g(x) = \ln \varphi(x) = x \ln \left( \sqrt[x]{c} - 1 \right)$ for $x > 0$. 
Since 
\begin{align*}
 g'(x) &= \ln \left( \sqrt[x]{c} - 1 \right) + x \frac{1}{\sqrt[x]{c} - 1} \sqrt[x]{c} \ln c \left( - \frac{1}{x^2} \right) \\
       &= \ln \left( \sqrt[x]{c} - 1 \right) - \frac{\sqrt[x]{c}}{\sqrt[x]{c} - 1} \frac{1}{x} \ln c  \\
       &< \ln \left( \sqrt[x]{c} - 1 \right) - \ln \sqrt[x]{c} < 0,
\end{align*}
then $g$ is strictly decreasing on $(0, \infty)$.
So is $\varphi(x) = \e^{g(x)}$. 
It is easy to verify that for $x \in (0, \infty),~0 < \varphi(x) < c$.
We show that the upper and lower bounds are the limits of the function $\varphi(x)$ as $x \to \infty$ and $x \to 0^+$, respectively. 
Since
\begin{align*}
 \lim_{x \to 0^+} x \ln \left( \sqrt[x]{c} - 1 \right) &= \lim_{x \to 0^+} \frac{\ln \left( \sqrt[x]{c} - 1 \right)}{\frac{1}{x}} \\
                           &= \lim_{x \to 0^+} \frac{\frac{1}{\sqrt[x]{c} - 1} \sqrt[x]{c} \ln c \left( - \frac{1}{x^2} \right)}{- \frac{1}{x^2}} \\
                                                       &= \lim_{x \to 0^+} \frac{\sqrt[x]{c}}{\sqrt[x]{c} - 1} \ln c \\
                                                       &= \lim_{x \to 0^+} \frac{1}{1 - c^{-1/x}} \ln c \\
                                                       &= \ln c,
\end{align*}
then 
$$
   \lim_{x \to 0^+} \left( \sqrt[x]{c} - 1 \right)^x = \lim_{x \to 0^+} \e^{\ln \left( \sqrt[x]{c} - 1 \right)^x}
                                                     = \exp \left( \lim_{x \to 0^+} x \ln \left( \sqrt[x]{c} - 1 \right) \right)
                                                     = \e^{\ln c} = c.
$$
We first take a look at the limit
$$
   \lim_{x \to \infty} \sqrt[x]{c} = \lim_{x \to \infty} \e^{\ln c^{1/x}} = \exp \left( \lim_{x \to \infty} \frac{\ln c}{x} \right) = 1.
$$
Since $\displaystyle \lim_{x \to \infty} x \ln \left( \sqrt[x]{c} - 1 \right) = - \infty$, then 
$$
   \lim_{x \to \infty} \left( \sqrt[x]{c} - 1 \right)^x = \lim_{x \to \infty} \e^{\ln \left( \sqrt[x]{c} - 1 \right)^x}
                                                        = \exp \left( \lim_{x \to \infty} x \ln \left( \sqrt[x]{c} - 1 \right) \right)
                                                        = 0.
$$
\end{proof}

\begin{corollary} \label{cor:phi}
If $a > b > 0$, then the function 
$$
   \Phi(x) = \left( \sqrt[x]{a} - \sqrt[x]{b} \right)^x = b \left( \sqrt[x]{\frac{a}{b}} - 1 \right)^x
$$
is strictly decreasing on $(0, \infty)$ with 
$$
   \lim_{x \to 0^+} \Phi(x) = a, \; \lim_{x \to \infty} \Phi(x) = 0 \; \text{ and } \: 0 < \Phi(x) < a.
$$
\end{corollary}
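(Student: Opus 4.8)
The plan is to reduce the corollary directly to Theorem \ref{thm:phi} by factoring out $b$ and substituting $c = a/b$. First I would verify the algebraic identity asserted in the statement: for $x > 0$,
$$
 \left( \sqrt[x]{a} - \sqrt[x]{b} \right)^x = \left( \sqrt[x]{b} \right)^x \left( \frac{\sqrt[x]{a}}{\sqrt[x]{b}} - 1 \right)^x = b \left( \sqrt[x]{\tfrac{a}{b}} - 1 \right)^x,
$$
using $\left( \sqrt[x]{b} \right)^x = b$ and $\sqrt[x]{a}/\sqrt[x]{b} = \sqrt[x]{a/b}$. Since $a > b > 0$, the quantity $c := a/b$ satisfies $c > 1$, so with $\varphi$ as in Theorem \ref{thm:phi} we have the clean relation $\Phi(x) = b\,\varphi(x)$ on $(0,\infty)$.

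From here everything follows by multiplying the conclusions of Theorem \ref{thm:phi} by the positive constant $b$. Strict monotonicity transfers immediately: $\varphi$ is strictly decreasing on $(0,\infty)$ by part a), and scaling by $b>0$ preserves this, so $\Phi$ is strictly decreasing. For the limits, part b) gives $\lim_{x\to 0^+}\varphi(x) = c$ and $\lim_{x\to\infty}\varphi(x) = 0$, hence
$$
 \lim_{x\to 0^+}\Phi(x) = b\,c = b\cdot\frac{a}{b} = a, \qquad \lim_{x\to\infty}\Phi(x) = b\cdot 0 = 0.
$$
Finally, the bound $0 < \varphi(x) < c$ established inside the proof of Theorem \ref{thm:phi} yields $0 < \Phi(x) < bc = a$ after multiplication by $b$.

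There is essentially no obstacle here: the corollary is a rescaling of Theorem \ref{thm:phi}, and the only point requiring a line of care is the opening algebraic identity (in particular the simplification $\left(\sqrt[x]{b}\right)^x = b$, valid since $b > 0$ and $x > 0$), which licenses the substitution $c = a/b$. Once that identity is in hand the rest is a one-line appeal to the already-proved theorem.
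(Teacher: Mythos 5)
Your proposal is correct and is exactly the argument the paper intends: the corollary's own statement already displays the factorization $\Phi(x) = b\left( \sqrt[x]{a/b} - 1 \right)^x$, and the conclusion follows by applying Theorem \ref{thm:phi} with $c = a/b > 1$ and scaling by $b > 0$. Your extra care with the identity $\left( \sqrt[x]{b} \right)^x = b$ and with citing the bound $0 < \varphi(x) < c$ from the theorem's proof is fine and adds nothing beyond the paper's route.
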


%
%
%


\end{document}